\definecolor{vert}{rgb}{0,0.6,0}
\definecolor{vert}{rgb}{0,0.6,0}
\numberwithin{figure}{section}
\theoremstyle{plain}
\newtheorem{thm}{Theorem}[section]
\newtheorem{defn}{Definition}
\newtheorem{lem}[thm]{Lemma}
\newtheorem{cor}[thm]{Corollary}
\newtheorem{prop}[thm]{Proposition}
\theoremstyle{remark}
\newtheorem{rem}{\bf{Remark}}
\numberwithin{equation}{section}
\newcommand{\N}{\mathbb{N}}
\newcommand{\R}{\mathbb{R}}
\newcommand{\T}{\mathbb{T}}
\newcommand{\Z}{\mathbb{Z}}
\newcommand{\cL}{\mathcal{L}}
\newcommand{\cM}{\mathcal{M}}
\newcommand{\AC}{{\rm AC\,}}
\newcommand{\BUC}{{\rm BUC\,}}
\newcommand{\Lip}{{\rm Lip\,}}
\newcommand{\al}{\alpha}
\newcommand{\gam}{\gamma}
\newcommand{\del}{\delta}
\newcommand{\ep}{\varepsilon}
\newcommand{\lam}{\lambda}
\newcommand{\ol}{\overline}
\begin{document}

\title[Rate of convergence]
{Rate of convergence in periodic homogenization of Hamilton-Jacobi equations: the convex setting}

\author[H. MITAKE, H. V. TRAN, Y. YU]
{Hiroyoshi Mitake, Hung V. Tran, Yifeng Yu}

\thanks{
The work of HM was partially supported by the JSPS grants: KAKENHI \#15K17574, \#26287024,  \#16H03948.
The work of HT is partially supported by NSF grant DMS-1664424.
The work of YY is partially supported by NSF CAREER award \#1151919.
}

\address[H. Mitake]{
Graduate School of Mathematical Sciences, 
University of Tokyo 
3-8-1 Komaba, Meguro-ku, Tokyo, 153-8914, Japan}
\email{mitake@ms.u-tokyo.ac.jp}

\address[H. V. Tran]
{
Department of Mathematics, 
University of Wisconsin Madison, Van Vleck Hall, 480 Lincoln Drive, Madison, Wisconsin 53706, USA}
\email{hung@math.wisc.edu}

\address[Y. Yu]
{
Department of Mathematics, 
University of California at Irvine, 
California 92697, USA}
\email{yyu1@math.uci.edu}

\keywords{Cell problems; periodic homogenization; optimal rate of convergence; first-order Hamilton-Jacobi equations; viscosity solutions}
\subjclass[2010]{
35B10 
35B27 
35B40, 
35F21 
49L25 
}

\maketitle

\begin{abstract}
We study the rate of convergence of $u^\ep$, as $\ep \to 0+$, to $u$ in periodic homogenization of Hamilton-Jacobi equations.
Here, $u^{\ep}$ and $u$ are viscosity solutions to the oscillatory Hamilton-Jacobi equation and its effective equation 
\begin{equation*}
{\rm (C)_\ep} \qquad 
\begin{cases}
 u_t^\ep+H\left(\frac{x}{\ep},Du^\ep\right)=0 \qquad &\text{in} \ \R^n \times (0,\infty),\\
u^\ep(x,0)=g(x) \qquad &\text{on} \ \R^n,
\end{cases} 
\end{equation*}
and
\begin{equation*}
{\rm (C)} \qquad 
\begin{cases}
 u_t+\ol{H}\left(Du\right)=0 \qquad &\text{in} \ \R^n \times (0,\infty),\\
u(x,0)=g(x) \qquad &\text{on} \ \R^n,
\end{cases} 
\end{equation*}
respectively. 
We assume  that the Hamiltonian $H=H(y,p)$ is coercive and convex in the $p$ variable and is $\Z^n$-periodic in the $y$ variable,
and the initial data $g$ is bounded and Lipschitz continuous.
Here, $\ol{H}$ is the effective Hamiltonian.

\smallskip

We prove that

\noindent(i)
\[
u^\ep(x,t)\geq u(x,t)- C\ep \quad \text{ for all $(x,t)\in \R^n\times [0,\infty)$},
\]
where $C$ depends only on $H$ and $\|Dg\|_{L^\infty(\R^n)}$;
\smallskip

\noindent(ii)  For fixed $(x,t) \in \R^n \times (0,\infty)$, if $u$ is differentiable at $(x,t)$ 
and $\overline H$ is twice differentiable at $p=Du(x,t)$, then
\[
u^\ep(x,t)\leq u(x,t)+\widetilde C_p t\ep + C\ep
\]
provided that $g\in C^2(\R^n)$ with $\|g\|_{C^2(\R^n)}<\infty$.
The constant $\widetilde C_p$ depends only on $H, \ol{H}, p$ and $g$.   
If $g$ is only Lipschitz continuous, then  the above inequality in (ii)  is changed into $u^\ep(x,t)\leq u(x,t)+C_p \sqrt{t\ep} + C\ep$.

\smallskip

When $n=2$ and $H$ is positively homogeneous in $p$ of some fixed degree $k \geq 1$, utilizing the Aubry-Mather theory, we obtain the optimal convergence rate $O(\ep)$
\[
|u^{\ep}(x,t)-u(x,t) |\leq C \ep \quad \text{ for all $(x,t)\in \R^2 \times [0,\infty)$.}
\]
 Here $C$ depends only on $H$ and $\|Dg\|_{L^\infty(\R^2)}$.

\smallskip

When $n=1$,  the optimal convergence rate $O(\ep)$ is established  for any coercive and convex $H$.

\smallskip

The convergence rate turns out to have deep connections with the dynamics of the  underlying Hamiltonian system and the shape of the effective Hamiltonian $\overline H$. Some related results and counter-examples  are obtained as well.
\end{abstract}


\section{Introduction}

We first give a brief description of the periodic homogenization theory for Hamilton-Jacobi equations.
For each $\ep>0$, let $u^\ep \in C(\R^n \times [0,\infty))$ be the viscosity solution to (C)$_\ep$.
Here, the Hamiltonian $H=H(y,p):\R^n \times \R^n \to \R$ is a  given continuous function, which satisfies
\begin{itemize}
\item[(H1)] For each $p\in \R^n$, $y \mapsto H(y,p)$ is $\Z^n$-periodic.

\item[(H2)] $H$ is coercive in $p$, that is, uniformly for $y\in \T^n:=\R^n/\Z^n$,
\[
\lim_{|p| \to \infty} H(y,p) = +\infty.
\]
\end{itemize}
The initial data $g\in \BUC(\R^n)$, the set of bounded, uniformly continuous functions on $\R^n$.

It was proved in \cite{LPV} that, under assumptions (H1)--(H2), $u^\ep$ converges to $u$ locally uniformly on $\R^n \times [0,\infty)$ as $\ep \to 0$,
and $u$ solves the effective equation (C).
The effective Hamiltonian $\ol{H} \in C(\R^n)$ is determined in a nonlinear way by $H$ through the cell problem as following.  
For each $p \in \R^n$, it was shown in \cite{LPV} that there exists a unique constant $\ol H(p)\in \R$ such that the following cell (ergodic) problem has a continuous viscosity solution
\begin{equation*}
 {\rm (E)}_p \qquad H(y,p+Dv)=\ol H(p)  \qquad \text{in} \ \T^n.
\end{equation*}
If needed, we write $v=v(y,p)$ or $v=v_p(y)$ to demonstrate clearly the nonlinear dependence of $v$ on $p$. It is worth mentioning that in general $v(y,p)$ is not unique even up to additive constants.

Our main goal in this paper is to obtain rate of convergence of $u^\ep$ to $u$, that is, an optimal bound for $\|u^\ep-u\|_{L^\infty(\R^n \times [0,T])}$ for any given $T>0$ as $\ep \to 0+$.  
Heuristically, owing to the two-scale asymptotic expansion, 
\begin{equation}\label{expansion}
u^{\ep}(x,t) \approx u(x,t)+\ep v\left({x\over \ep}, Du(x,t)\right)+O(\ep^2),
\end{equation}
and hence, convergence rate looks like $O(\ep)$ which, if achievable, is clearly optimal.  However, it is basically hopeless to justify this expansion due to two reasons:

\begin{itemize}
\item[(1)] in general, there does not exist a continuous selection of $v(y,p)$ (or even supersolutions) with respect to $p$, let alone Lipschitz continuous selection; 
\item[(2)] the solution $u(x,t)$ to (C) is only Lipschitz in $(x,t)$, and is usually not $C^1$, let alone $C^2$. 
\end{itemize}

\noindent Up to now, the best known convergence rate is $O(\ep^{1/3})$ obtained in \cite{CDI} 
through a kind of weak formulation of (\ref{expansion}) using the ergodic problems and their discounted approximations.  
To be more precise, \cite{CDI} deals with  stationary problems, but the approach can be easily adjusted to handle the Cauchy problem.
Point (1) above poses an essential obstacle to improve the convergence rate by modifying  the method in \cite{CDI}. 
See Section \ref{sec:non-cont} for details. 
See also \cite{ACS, CCM, LYZ, MT}  and the references therein for related results on rate of convergence of first order Hamilton-Jacobi equations in various settings.

\subsection{Main results}
We now describe our main results. In this paper, we assume further that
\begin{itemize}
\item[(H3)] For each $y\in \T^n$, $p \mapsto H(y,p)$ is convex.
\end{itemize}

\subsubsection*{Multi dimensions}
For $n\geq 1$, denote 
\[
S=\left\{p\in \R^n\,:\,  \text{$\ol{H}$ is twice differentiable at $p$}\right\}.
\] 
\begin{thm} \label{thm:nd}
Assume {\rm (H1)--(H3)} and $g \in \BUC(\R^n) \cap \Lip(\R^n)$.
For $\ep>0$, let $u^\ep$ be the viscosity solution to {\rm (C)$_\ep$}.
Let $u$ be the viscosity solution to {\rm (C)}.
Then,

{\rm (i)} \begin{equation}\label{lower-bound}
u^\ep (x,t)\geq u(x,t)- C  \ep \qquad \text{for  all $(x,t)\in \R^n\times [0,\infty)$}.
\end{equation}
The constant $C>0$ in {\rm (i)} and {\rm (ii)} below depends only on $H$ and $\|Dg\|_{L^\infty(\R^n)}$. 

{\rm (ii)} For fixed $(x,t)\in \R^n\times (0, \infty)$,  if  $u$ is differentiable at $(x,t)$ and  $p=Du(x,t)\in  S$, then
\begin{equation}\label{upper-bound}
u^{\ep}(x,t)\leq u(x,t)+C_p  \sqrt {t\ep} + C\ep.
\end{equation}
Here $C_p>0$ is a constant depending on $H, \ol{H},p$ and $\|Dg\|_{L^\infty(\R^n)}$. 
If we further assume that the initial data $g\in C^2(\R^n)$ with $\|g\|_{C^2(\R^n)}<\infty$, then 
\begin{equation}\label{upper-bound-C2}
u^{\ep}(x,t)\leq u(x,t)+\widetilde C_{p} t\ep + C\ep.
\end{equation}
Here $\widetilde C_{p} $ is a constant depending on  $H, \ol{H},p$ and  $\|g\|_{C^2(\R^n)}$.
\end{thm}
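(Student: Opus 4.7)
\emph{The plan is as follows.} Work with the Lagrangian/Hopf--Lax representations
\[
u^\ep(x,t) = \inf_{\gamma(t)=x}\Bigl\{g(\gamma(0)) + \int_0^t L\bigl(\tfrac{\gamma(s)}{\ep}, \dot\gamma(s)\bigr)\, ds\Bigr\}, \qquad u(x,t) = \inf_{y \in \R^n}\Bigl\{g(y) + t\,\ol L\bigl(\tfrac{x-y}{t}\bigr)\Bigr\},
\]
where $L(y,\cdot)$ is the Legendre dual of $H(y,\cdot)$ and Hopf--Lax applies to $(C)$ because $\ol H$ is convex and $x$-independent. The key pointwise identity linking these to the cell problem is
\[
L(y, v) + \ol H(p) - p \cdot v \ \geq\ D_y v(y, p) \cdot v, \qquad (y, p, v) \in \T^n \times \R^n \times \R^n,
\]
an immediate consequence of the convex duality $L(y, v) + H(y, p + D_y v) \geq (p + D_y v) \cdot v$ combined with $H(y, p + D_y v(y, p)) = \ol H(p)$.

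\textbf{Lower bound (i).} Fix a near-optimal trajectory $\gamma^\ep$ for $u^\ep(x,t)$, set $\bar q := (x-\gamma^\ep(0))/t$, and choose $p \in \partial \ol L(\bar q)$, so that $\ol L(\bar q) = p\cdot\bar q - \ol H(p)$. Integrating the displayed inequality along $\gamma^\ep$ at this fixed $p$, the corrector term telescopes:
\[
\int_0^t D_y v(\gamma^\ep/\ep, p) \cdot \dot\gamma^\ep\, ds = \ep\bigl[v(\gamma^\ep(t)/\ep, p) - v(\gamma^\ep(0)/\ep, p)\bigr].
\]
Hence $\int_0^t L(\gamma^\ep/\ep, \dot\gamma^\ep)\, ds \geq t\, \ol L(\bar q) - 2\ep \|v(\cdot, p)\|_\infty$. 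Adding $g(\gamma^\ep(0))$ and applying Hopf--Lax in the form $u(x,t) \leq g(\gamma^\ep(0)) + t\,\ol L(\bar q)$ yields $u^\ep(x,t) \geq u(x,t) - 2\ep \|v(\cdot, p)\|_\infty$. Since $\|Du^\ep\|_\infty \leq \|Dg\|_\infty$ (standard under (H2)), both $|\bar q|$ and $|p|$ lie in a bounded set determined by $H$ and $\|Dg\|_\infty$, on which coercivity plus periodicity bound $\|v(\cdot, p)\|_\infty$ uniformly, giving (i).

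\textbf{Upper bound (ii), $C^2$ case.} For $(x_0, t_0)$ with $p_0 := Du(x_0, t_0) \in S$, let $y_0$ be a Hopf--Lax minimizer and $q_0 := (x_0 - y_0)/t_0 \in \partial \ol H(p_0)$. The goal is to construct a competitor $\gamma^\ep$ with $\gamma^\ep(0)=y_0$, $\gamma^\ep(t_0) = x_0$, whose cost is $\leq t_0 \ol L(q_0) + \widetilde C_{p_0} t_0 \ep + C\ep$. By Mather/weak KAM theory, $\ol L(q_0) = \int L\, d\mu_0$ for a closed probability measure $\mu_0$ on $\T^n \times \R^n$ with $\int v\, d\mu_0 = q_0$; the hypothesis $p_0 \in S$ (so that $\ol H \in C^2$ locally at $p_0$) gives additional Lipschitz-graph regularity of $\mathrm{supp}\,\mu_0$. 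Build $\gamma^\ep$ by piecing together short $\ep$-rescaled periodic orbits drawn from $\mu_0$, with a final $O(\ep)$-length gluing segment ending at $x_0$. Each period contributes an $O(\ep^2)$ boundary adjustment; summed over the $\sim t_0/\ep$ periods, these yield the $\widetilde C_{p_0} t_0 \ep$ term, whose coefficient depends on $\ol H''(p_0)$, corrector regularity, and---through the $C^2$-stability of the map $(x_0,t_0)\mapsto y_0$---on $\|g\|_{C^2}$.

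\textbf{Upper bound (ii), Lipschitz case; main obstacle.} For merely Lipschitz $g$, mollify: $g^\delta := g * \rho_\delta \in C^2$ with $\|g - g^\delta\|_\infty \leq \|Dg\|_\infty \delta$ and $\|g^\delta\|_{C^2} \leq C/\delta$. Let $u^{\ep,\delta}, u^\delta$ denote the corresponding solutions; comparison yields $|u^\ep - u^{\ep,\delta}|, |u - u^\delta| \leq \|Dg\|_\infty \delta$, and the $C^2$ estimate (with $\widetilde C_{p,\delta} \lesssim 1/\delta$ after tracing through the construction above) gives $u^\ep(x,t) \leq u(x,t) + C\delta + C t\ep/\delta + C\ep$; optimizing $\delta = \sqrt{t\ep}$ produces the claimed $\sqrt{t\ep}$ bound. \emph{The main obstacle} is the quantitative trajectory construction in the $C^2$ case, where Mather-optimal closed orbits must be patched together at scale $\ep$ while respecting both the macroscopic endpoint constraint $\gamma^\ep(t_0)=x_0$ and precise per-period cost accounting. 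The hypothesis $p_0 \in S$ is essential---it supplies both the Mather-set regularity and the second-order Taylor expansion of $\ol H$ needed to absorb $O(|Du - p_0|^2)$ errors---and the known obstruction that $p \mapsto v(\cdot, p)$ may be discontinuous (emphasized in the introduction) is sidestepped throughout by fixing a single $p=p_0$.
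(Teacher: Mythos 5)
Your argument is essentially the paper's: both fix a near-optimal trajectory for $u^\ep$, use the calibration inequality
\[
\int L(\eta,\dot\eta)+\ol H(p)\,ds \geq p\cdot(\eta(t_1)-\eta(t_2))+v_p(\eta(t_1))-v_p(\eta(t_2)),
\]
and bound the corrector telescoping term by $O(\ep)$ using the coercivity/periodicity bound on $\|v_p\|_\infty$. The only cosmetic difference is that the paper takes a supremum over $|p|\leq C_1$ to recover $\ol L$ rather than fixing a single $p\in\partial\ol L(\bar q)$; the two are equivalent. Your ``pointwise identity'' has the usual caveat that $D_y v$ only exists a.e., but this is resolved by mollification exactly as in the paper.

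\textbf{Upper bound (ii), $C^2$ case: genuine gap.} Your construction ``piecing together short $\ep$-rescaled periodic orbits drawn from $\mu_0$'' does not work and hides the key difficulty. First, orbits in the Mather set associated with $p_0$ need not be periodic---for irrational rotation vectors there are no periodic Mather orbits at all. Second, the assertion that $p_0\in S$ ``gives additional Lipschitz-graph regularity of $\mathrm{supp}\,\mu_0$'' is not true; twice differentiability of $\ol H$ \emph{at a single point} does not yield any structural statement about the Mather set. Third, the per-period $O(\ep^2)$ accounting would, even if true, require a quantitative rate at which the time average of a Mather-set orbit converges to its rotation vector $D\ol H(p_0)$---and establishing \emph{that rate} is precisely the crux of the theorem. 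The paper's mechanism is a single backward (calibrated) characteristic $\xi$ of a fixed corrector $v_{p_0}$, together with the lemma that twice differentiability of $\ol H$ at $p_0$ forces
\[
\left|\frac{\xi(t)-\xi(0)}{t}-D\ol H(p_0)\right|\leq \frac{C_{p_0}}{\sqrt{|t|}},
\]
proved by testing the calibration inequality against $\tilde p=p_0+w/(\sqrt{|t|}\,|w|)$ with $w=(\xi(t)-\xi(0))/t-D\ol H(p_0)$ and invoking the one-sided second-order Taylor bound of $\ol H$. The $O(\sqrt{|t|^{-1}})$ endpoint error then feeds into a Taylor expansion of $g$ at the Hopf--Lax minimizer $y_0$, where the linear term cancels because $p_0=Dg(y_0)$; squaring gives $O(t\ep)$. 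Your proposal never produces the $1/\sqrt{|t|}$ ergodic rate, which cannot be obtained from a naive period count.

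\textbf{Upper bound (ii), Lipschitz case: different route with a gap.} Your mollification idea is a genuinely different strategy but fails quantitatively. After replacing $g$ by $g^\delta$, the derivative $p^\delta=Du^\delta(x,t)$ changes, and there is no control over whether $p^\delta\in S$ or whether $u^\delta$ is even differentiable at $(x,t)$; moreover the constant $\widetilde C_{p^\delta}$ depends on the second-order Taylor coefficient of $\ol H$ at $p^\delta$, which is not uniform in $\delta$. Your estimate $\widetilde C_{p,\delta}\lesssim 1/\delta$ conceals this dependence and cannot be ``traced through'' without controlling $C_{p^\delta}$. The paper sidesteps all of this: both the Lipschitz and $C^2$ cases use the \emph{same} $p=Du(x,t)\in S$ and the \emph{same} backward characteristic $\xi$; the $\sqrt{t\ep}$ bound follows directly from the $O(\sqrt{|t|^{-1}})$ endpoint error and $\|Dg\|_\infty$-Lipschitz estimate on $g$, while the $t\ep$ bound uses the second-order Taylor expansion of $g$ at $y_0$ to square the error. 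No mollification, no change of $p$.

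\textbf{In short.} Part (i) is correct and parallels the paper. Part (ii) is missing the central technical lemma---the quantitative convergence of a backward characteristic's average slope to $D\ol H(p)$ under twice differentiability---and the mollification strategy for the Lipschitz case introduces uncontrolled constants.
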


\begin{rem} In general, it is probably impossible to tell whether $\ol H$ is twice differentiable at a given $p$. Nevertheless, under assumptions (H1)--(H3), we have that $\ol{H}$ is convex and coercive. The Alexandrov theorem yields that $\R^n\backslash S$ has zero Lebesgue measure. 
Thus, Theorem \ref{thm:nd} sort of  implies that for`` typical"   $g\in C^2$ and Hamiltonian $H$, 
the convergence rate is  $O(\ep)$ for ``most" $(x,t)\in \R^n \times (0,T]$ for fixed $T>0$, which is optimal.
Besides the shape of $\ol H$, the upper bound of $u^{\ep}-u$  is also closely related to  a challenging issue in weak KAM theory (or Aubry-Mather theory),
which is to identify  how fast the average slope of a backward characteristic defined in Definition \ref{def:back} (or orbits in the Mather set)
\[
{\xi(t)\over t}
\]
tends to $D\overline H(p)$ as $t\to -\infty$ for $p\in  \R^n$ and $n>1$.
 See Lemma \ref{lem:rate-connection} for details. When $n\geq 3$, except for some special cases, the above statistical type upper bound \eqref{upper-bound} and \eqref{upper-bound-C2} might be the best we can obtain since very little is known about the shape of $\ol H$ and the structure  of Aubry and Mather sets. See Remark \ref{rem:ndgeneral} for more comments.  The classical Hedlund example \cite{Ba1989,Hed} might be the only well-understood and interesting   three dimensional example, where the convergence rate turns out to be $O(\ep)$ (Theorem \ref{Hedlund-optimal}). 

It is reasonable to speculate that when $n\geq 3$, in general, the optimal convergence rate $O(\ep)$ cannot be achieved for every point $(x,t)$. However, we have not been able to find an explicit example with a fractional convergence rate, which basically involves determining  the long time behavior of a trajectory in a chaotic Hamiltonian system. 
\end{rem}

\begin{rem}If the initial data $g \in \Lip(\R^n)$ is concave, 
it is quite easy to get the optimal convergence rate $O(\ep)$ by using a simple PDE approach based on convexity of $H$. 
See Theorem \ref{thm:concave}.
\end{rem}

\subsubsection*{Two dimensions}
 
 In two dimensions ($n=2$),  orbits in the Mather set can be identified with circle  homeomorphisms (see Appendix), which provide nice control of the convergence of the above average slope.   
 Employing this  special structure and adding a positive homogeneity assumption on $H$,  we are able to establish the optimal rate of convergence.

\begin{thm} \label{thm:2d}
Assume  $n=2$, {\rm (H1)--(H3)} and $g \in \BUC(\R^2) \cap \Lip(\R^2)$. 
Assume further that $H$ is positively homogeneous of degree $k$ in $p$ for some $k \geq 1$, that is, $H(y,\lam p) = \lam^k H(y,p)$ for all $(\lam,y,p) \in [0,\infty) \times \T^2 \times \R^2$. 
Then,
\begin{equation}\label{bound-2d}
|u^\ep (x,t)-u(x,t)|\leq  C  \ep \qquad \text{for  all $(x,t)\in \R^2\times [0,\infty)$}.
\end{equation}
Here $C>0$ is a constant depending only on $H$ and $\|Dg\|_{L^\infty(\R^2)}$. 
\end{thm}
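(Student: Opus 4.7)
The lower bound $u^\ep \geq u - C\ep$ is exactly Theorem \ref{thm:nd}(i), so the only remaining task is to produce the matching upper bound $u^\ep \leq u + C\ep$ with a constant depending only on $H$ and $\|Dg\|_{\Li(\R^2)}$. My plan is to revisit the mechanism behind Theorem \ref{thm:nd}(ii) and exploit the two-dimensional structure together with positive $k$-homogeneity to promote the pointwise, $p$-dependent bound $C_p\sqrt{t\ep}+C\ep$ into a uniform $O(\ep)$ estimate.

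The core of the argument in Theorem \ref{thm:nd}(ii), encoded by Lemma \ref{lem:rate-connection}, links the excess $u^\ep(x,t)-u(x,t)$ to how fast the average slope $\xi(s)/s$ of a backward characteristic $\xi$ tends to $q=D\overline H(p)$ as $s \to -\infty$, where $p$ is taken in the superdifferential of $u$ at $(x,t)$. If one can prove the quantitative rate $|\xi(s)/s - q| \leq C/|s|$ uniformly for $p$ in a compact set, then the $\sqrt{t\ep}$ error in \eqref{upper-bound} is upgraded to $C\ep$. The hard part will be to establish this uniform $1/|s|$ rate for \emph{every} $p$, including those where $\overline H$ fails to be twice differentiable, without relying on any pointwise regularity of $\overline H$ that is in general unavailable.

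This is where the hypotheses become decisive. Positive $k$-homogeneity of $H$ propagates to $\overline H$, and together with the standard Lipschitz bound $|Du| \leq \|Dg\|_{\Li(\R^2)}$ reduces the problem to $p$ lying on the unit circle, where every $p$-dependent quantity becomes uniform by compactness. In two dimensions, by the analysis summarized in the Appendix, any orbit of the Euler-Lagrange flow in the Aubry-Mather set at momentum $p$ admits a transversal Poincar\'e section on which the first-return map is a circle homeomorphism $F_p$ with rotation number $\alpha(p)$ determined by the direction of $D\overline H(p)$. The classical bound $|F_p^n(\theta) - \theta - n\alpha(p)| \leq 1$ for circle homeomorphisms, combined with uniform control of the return time coming from compactness on $\bS^1$ and from homogeneity, yields precisely the required uniform defect $|\xi(s)/s - q| \leq C/|s|$. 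Feeding this estimate back into the competitor construction of Theorem \ref{thm:nd}(ii) upgrades \eqref{upper-bound} into $u^\ep(x,t) \leq u(x,t) + C\ep$ throughout $\R^2 \times [0,\infty)$, with the degenerate case $q=0$ (or $p=0$) handled separately by using the corrector itself as a stationary competitor so that only the $O(\ep)$ error from $\ep v_p(\cdot/\ep)$ survives. Together with Theorem \ref{thm:nd}(i) this yields \eqref{bound-2d}.
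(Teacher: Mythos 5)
Your outline has the right skeleton — use the lower bound from Theorem~\ref{thm:nd}(i), reduce the upper bound to Corollary~\ref{cor:optimal-rate} by producing a backward characteristic with $|\xi(s)/s - D\ol H(p)| \le C/|s|$ uniformly, use $k$-homogeneity to reduce to $\ol H(p)=1$, and invoke the circle-homeomorphism structure of two-dimensional Mather sets. But the way you claim to obtain the uniform $1/|s|$ rate has a genuine gap.

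The circle-homeomorphism estimate $|f^i(x)-f(x)-i\beta_f|\le 1$ only controls the \emph{direction} of the orbit: it gives (as in Lemma~\ref{slope-rate}) a bound of the form $|y(t)-s_p\,x(t)|\le C$, i.e.\ the orbit stays within bounded distance of a line through the origin. You also invoke ``uniform control of the return time,'' but bounded return times (above and below) merely say that $x(t_k)/t_k$ stays in a compact interval $[1/c_2,1/c_1]$; since the return times genuinely fluctuate along a Mather orbit, this does \emph{not} give $|x(t)/t - a|\le C/|t|$ — boundedness of a ratio is not a rate of convergence. So direction control plus return-time bounds are insufficient to close the argument, and compactness on $\bS^1$ cannot manufacture the missing estimate.

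The missing ingredient is the \emph{non-tangential} control coming from Euler's relation, and this is precisely where $k$-homogeneity is load-bearing. Along an orbit $\xi$ in the Mather set one has $\dot\xi = D_pH(\xi,p+Dv(\xi))$ and $H(\xi,p+Dv(\xi))=\ol H(p)$, so
\[
p\cdot\xi(t)-p\cdot\xi(0)+v(\xi(t))-v(\xi(0))
=\int_0^t (p+Dv(\xi))\cdot D_pH(\xi,p+Dv(\xi))\,ds
= k\,\ol H(p)\,t = kt,
\]
using $(p+Dv)\cdot D_pH = kH$. Since $\ol H$ is also degree-$k$ homogeneous, $p\cdot D\ol H(p)=k$, and hence $|p\cdot(\xi(t)/t - D\ol H(p))|\le C/|t|$. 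It is \emph{this} exact calibration identity, paired with the directional estimate from the circle-map structure, that delivers $|\xi(t)-\xi(0)-tD\ol H(p)|\le C$ (Lemma~\ref{lem:2d-rotation-rate}). Your proposal never uses Euler's formula and attributes the magnitude control to return-time bounds instead, which is where the argument would fail. The $p=0$ case via a stationary characteristic is handled correctly.
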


Note that $k=1$ corresponds to Hamiltonians associated with  the  front propagation,  which is  probably one of the most physically relevant situations in the homogenization theory.  See Remark \ref{rem:mechanical-Ham} for comments about other types of Hamiltonians, e.g., $H(y,p)={1\over 2}|p|^2+V(y)$.

\subsubsection*{One dimension}
In one dimensional setting, we also obtain optimal  convergence rate $O(\ep)$ for general convex and coercive $H$ through the following theorem.
\begin{thm} \label{thm:1d}
Assume that $n=1$ and {\rm (H1)--(H3)} hold.
Assume further that $g \in \Lip(\R) \cap \BUC(\R)$. 
Then,
\begin{equation}\label{rate-class-1d}
\|u^\ep - u\|_{L^\infty(\R \times [0,\infty))} \leq C  \ep.
\end{equation}
Here $C$ is a constant depending only on $H$ and $\|g'\|_{L^\infty(\R)}$.
\end{thm}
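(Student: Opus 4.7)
Theorem~\ref{thm:nd}(i) already provides the lower bound $u^\ep\geq u-C\ep$, so the task reduces to the matching upper bound $u^\ep\leq u+C\ep$. My plan is to exploit the fully explicit structure of the one-dimensional cell problem: for every slope $p$ and level $c=\ol H(p)$ there is a bounded Lipschitz corrector $v_p:\T\to\R$ given by $v_p'(y)=q(y,c)-p$, where $q(y,\cdot)$ is one of the two branches of the convex equation $H(y,q)=c$ (chosen so that $\int_0^1 v_p'\,dy=0$), together with a one-dimensional characteristic on $\T$ whose oscillation around its linear profile can be controlled uniformly. Neither tool is available in this clean form in higher dimensions, which is why the 1D case admits the sharp rate $O(\ep)$.

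Fix $(x_0,t_0)\in\R\times(0,\infty)$. Since $\ol L:=\ol H^*$ is convex and coercive and $g$ is Lipschitz, the Hopf--Lax formula yields $y_0\in\R$ with $u(x_0,t_0)=g(y_0)+t_0\ol L(v_0)$ and $v_0:=(x_0-y_0)/t_0$ bounded by a constant depending only on $H$ and $\|g'\|_{L^\infty}$. Choose $p_0\in\partial\ol L(v_0)$, so that $\ol L(v_0)=p_0v_0-\ol H(p_0)$ and $|p_0|\leq\|g'\|_{L^\infty}$, and set $c^*:=\max_y\min_pH(y,p)$.

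\emph{Noncritical case $\ol H(p_0)>c^*$.} Here $H(y,p_0+v_{p_0}'(y))=\ol H(p_0)>H_{\min}(y)$, so the characteristic ODE $\dot Y=D_pH(Y,p_0+v_{p_0}'(Y))$ has sign-definite right-hand side on $\T$. Therefore $Y$ projects to a periodic orbit on $\T$ with rotation number $D\ol H(p_0)=v_0$, and $Y(s)-v_0s$ is periodic in $s$, hence uniformly bounded. Set $\gamma^\ep(s):=\ep Y(s/\ep)+[x_0-\ep Y(t_0/\ep)]$, so that $\gamma^\ep(t_0)=x_0$ and $|\gamma^\ep(0)-y_0|\leq C\ep$. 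Along the characteristic, the Fenchel identity gives
\[
L(Y,\dot Y)=(p_0+v_{p_0}'(Y))\dot Y-\ol H(p_0)=p_0\dot Y+\tfrac{d}{ds}v_{p_0}(Y)-\ol H(p_0),
\]
so integrating and rescaling by $\ep$ produces
\[
\int_0^{t_0}L\!\left(\tfrac{\gamma^\ep(s)}{\ep},\dot\gamma^\ep(s)\right)ds=p_0(x_0-\gamma^\ep(0))-t_0\ol H(p_0)+O(\ep),
\]
the $O(\ep)$ coming solely from the bounded boundary values of $v_{p_0}$. Combined with $u^\ep(x_0,t_0)\leq g(\gamma^\ep(0))+\int L\,ds$, Lipschitz continuity of $g$, and the identity $u(x_0,t_0)=g(y_0)+p_0(x_0-y_0)-t_0\ol H(p_0)$, this gives $u^\ep(x_0,t_0)\leq u(x_0,t_0)+C\ep$.

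\emph{Critical case $\ol H(p_0)=c^*$ (the main obstacle).} Now $D_pH$ may vanish where $H_{\min}(y)=c^*$, and the pure characteristic strategy breaks: a characteristic started exactly at such a degenerate point remains frozen. I bypass this by a convex mixture. Let $y^*$ maximize $H_{\min}$ and set $v_0^+:=\lim_{p\downarrow p_0}D\ol H(p)\in[0,\infty)$. For $v_0\in[0,v_0^+]$, split $[0,t_0]$ into a sitting phase of duration $\lambda t_0$ with $\gamma^\ep$ held at an $\ep$-shift of $y_0$ onto a periodic copy of $y^*$ (cost density $L(y^*,0)=-c^*$), an $O(\ep)$-short transition moving a distance $O(\ep)$ off $y^*$ to unfreeze the ODE, and a moving phase of duration $(1-\lambda)t_0$ along the boundary characteristic at slope $p_0$, whose average velocity is $v_0^+$. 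Choosing $(1-\lambda)v_0^+=v_0$ makes the endpoint match $x_0$ up to $O(\ep)$, and the total cost rearranges to
\[
-\lambda t_0 c^*+(1-\lambda)t_0(p_0v_0^+-c^*)+O(\ep)=t_0(p_0v_0-c^*)+O(\ep)=t_0\ol L(v_0)+O(\ep),
\]
closing the argument. Negative $v_0$ is handled symmetrically, and the degenerate subcase $v_0^+=0$ reduces to pure sitting at $y^*$.
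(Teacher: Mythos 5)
Your overall strategy is sound and is in essence the paper's own: keep the lower bound from Theorem \ref{thm:nd}(i), and for the upper bound plug curves calibrated by correctors into the optimal control formula, splitting according to whether $\ol H(p_0)$ equals the minimum of $\ol H$ (in one dimension your $c^*=\max_y\min_p H(y,p)$ coincides with $\min\ol H$) or lies strictly above it. Your critical-case gluing (sit at a maximizer of $\min_p H(y,\cdot)$, then run along a boundary-branch orbit) is the one-dimensional analogue of the ``approximate characteristic'' device of Remark \ref{rem:improve} used for the Hedlund example, and in the noncritical case your observation that $Y(s)-\rho s$ is exactly periodic plays the role of the paper's change of variables together with the quantitative ergodic Lemma \ref{lem:class-ineq}. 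However, two steps that you assert are precisely the content the paper has to prove, and one of them fails as literally written.

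First, you never reduce to smooth, strictly convex Hamiltonians, yet you use $D_pH$ to define the characteristic ODE and you use differentiability of $\ol H$ at the noncritical $p_0$ when you claim the rotation number equals $D\ol H(p_0)=v_0$. Under (H1)--(H3) alone, $H(y,\cdot)$ need not be differentiable and $\ol H$ can have corners at noncritical levels: take $H(y,p)=H(p)$ piecewise linear with a corner at some $p_0$ with $H(p_0)>\min H$; then $v\equiv 0$, $\partial\ol H(p_0)$ is a nontrivial interval, the ``characteristic'' is only a differential inclusion $\dot Y\in\partial_pH(Y,p_0)$, and any single-speed selection has rotation number an endpoint of $\partial\ol H(p_0)$, not a prescribed interior value $v_0$. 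So the noncritical case as written does not cover general convex $H$; you need either the paper's preliminary approximation by Tonelli Hamiltonians (legitimate because all your constants depend only on structural bounds, and then $\ol H\in C^1(\R)$ in one dimension, so $\partial\ol H(p_0)=\{v_0\}$), or an additional mixture argument at such corners, in the spirit of your critical case. Second, even granting smoothness, the identifications you assert --- that the branch orbit's rotation number equals $D\ol H(p_0)$, and that the critical-level boundary orbit has average velocity exactly $v_0^+=\lim_{p\downarrow p_0}D\ol H(p)$ --- are the quantitative heart of the theorem and require proof: they amount to $\ol H'(p)=\bigl(\int_0^1 H_p(y,p+v'(y))^{-1}\,dy\bigr)^{-1}$ and its limit as the level decreases to $c^*$ (monotone convergence), and in the critical case one must also make sense of the orbit through the zeros of $H_p$, where the ODE is degenerate, e.g.\ via the inverse-time construction $s(Y)=\int dY/H_p$. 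The paper establishes exactly these facts in Section \ref{sec:1d}; without them your endpoint matching and cost bookkeeping are unjustified. Once these points are supplied, the remainder of your computation (Fenchel equality along the orbit, $O(\ep)$ boundary terms from the bounded corrector, and the convex splitting $(1-\lambda)v_0^+=v_0$ producing $t_0\ol L(v_0)$) is correct; the claim $|p_0|\le\|g'\|_{L^\infty(\R)}$ is not needed as stated, since a bound on $|p_0|$ depending only on $H$ and $\|g'\|_{L^\infty(\R)}$ suffices and follows from the bound on $|v_0|$.
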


It remains a very interesting problem to determine the optimal convergence rate in the nonconvex setting when $n=1$. 
We conjecture that  the optimal convergence rate is $\sqrt {\ep}$ for general coercive $H$. 

\medskip

We would like to emphasize that the constant $C$ in our results does not depend on the smoothness of $H$, which will be made clear in proofs.

\subsubsection*{Lack of continuous selection of $v(y,p)$ with respect to $p$}
This  fact should be known to experts although we could not find any explicit example in the literature.  
In Section \ref{sec:non-cont}, we give an example in three dimensions where there is no continuous selection of $v(y,p)$ with respect to $p$.
See Theorem \ref{thm:non-cont}. This example resembles some features of the more sophisticated Hedlund example \cite{Hed} (see section 2.4).

\subsection*{Organization of the paper}
The paper is organized as follows.
We first provide settings and simplifications that can be used throughout the paper at the beginning of Section \ref{sec:proof-main}.
The latter part of Section \ref{sec:proof-main} is devoted to the proof of Theorem \ref{thm:nd}.
We give two further situations that  upper bound \eqref{upper-bound} can be improved in Corollary \ref{cor:flat}  and Theorem \ref{thm:concave}.
The classical Hedlund example is analyzed at the end of this section (Theorem \ref{Hedlund-optimal}).
The proof of Theorem \ref{thm:2d} for positively homogeneous Hamiltonians  in two dimensions is shown in Section \ref{sec:2d}.
In Section \ref{sec:1d}, we give the proof of Theorem \ref{thm:1d}.
In Subsection \ref{subsec:ex}, we give a simple example to demonstrate that $O(\ep)$ is indeed the optimal rate of convergence in one dimension.
We discuss the nonexistence of continuous  selection $v(y,p)$ with respect to $p$ in Section \ref{sec:non-cont}. Our approaches combine tools from PDE, weak KAM theory and the Aubry-Mather theory. Some basic backgrounds are provided in Appendix (Section \ref{appen}).

\subsection*{Acknowledgment}  
We are deeply thankful to Hitoshi Ishii, who provides us invaluable comments and suggestions, 
which help much in vastly improving the presentation of the paper.  
We also would like to thank Weinan E and Jinxin Xue for helpful comments and discussions.


\section{Settings, Simplifications and Proof of Theorem \ref{thm:nd}} \label{sec:proof-main}
In this section, (H1)--(H3) are always in force.
We always assume that $g \in \BUC(\R^n) \cap \Lip(\R^n)$.

\subsection{Settings and Simplifications} 
By the comparison principle, it is straightforward that 
\[
\|Du^{\ep}(x,t)\|_{L^{\infty}(\R^n)}\leq C_0.
\]
Here $C_0>0$ is a constant depending only on $H$ and $\|Dg\|_{L^\infty(\R^n)}$.  
Accordingly,  values of $H(y,p)$ for $|p|>C_0$ are irrelevant.  Hence, without loss of generality, we may further assume that $H$ grows quadratically, that is,
\begin{equation}\label{quad-growth}
 {1\over 2}|p|^2-K_0\leq H(y,p)\leq {1\over 2} |p|^2+K_0 \quad \text{ for all } (y,p) \in \T^n \times \R^n,
\end{equation}
for some $K_0 >1$.  
Then, we also have that
\begin{equation}\label{quad-growth-Hbar}
 {1\over 2}|p|^2-K_0\leq \ol{H}(p) \leq {1\over 2} |p|^2+K_0 \quad \text{ for all } p \in \R^n.
\end{equation}
We use \eqref{quad-growth} and  \eqref{quad-growth-Hbar} to get that, for each $v_p \in \Lip(\T^n)$ solving (E)$_p$,
\begin{equation*}
\|Dv_p\|_{L^\infty(\T^n)} \leq 2(|p|+K_0).
\end{equation*}
In particular,
\begin{equation}\label{osc-v-p}
\max_{\T^n} v_p - \min_{\T^n} v_p \leq 2 \sqrt{n} (|p|+K_0).
\end{equation}

Let $L(y,q)$ and $\overline L(q)$ be the Lagrangians (Legendre transforms) of the Hamiltonians $H(y,p)$ and $\overline H(p)$, respectively. 
It is clear that
\begin{equation}\label{quad-growth-L}
 {1\over 2}|q|^2-K_0\leq L(y,q)\leq {1\over 2} |q|^2+K_0 \quad \text{ for all } (y,q) \in \T^n \times \R^n,
\end{equation}
and
\begin{equation*}\label{quad-growth-Lbar}
 {1\over 2}|q|^2-K_0\leq \ol{L}(q) \leq {1\over 2} |q|^2+K_0 \quad \text{ for all } q \in \R^n.
\end{equation*}

\smallskip

For $(x,t)\in \R^n \times (0,\infty)$, the optimal control formula for the solution to (C)$_\ep$ implies
\begin{equation}\label{oc}
u^{\ep}(x,t)=\inf_{\substack{ \ep \eta(0)=x \\ \eta \in \AC([-\ep^{-1}t,0])}}\left\{g\left(\ep\eta \left(-\ep^{-1}t\right)\right)+\ep\int_{-\ep^{-1}t}^{0}L(\eta(s),\dot \eta(s))\,ds\right\}. 
\end{equation}
Here, $ \AC([-\ep^{-1}t,0])$ is the set of all absolutely continuous functions on $[-\ep^{-1}t,0]$ with values in $\R^n$.

\subsection{Proof of Theorem \ref{thm:nd}}

We divide the proof into two parts.
We first derive the lower bound \eqref{lower-bound}.  

\begin{proof}[Proof of  lower bound \eqref{lower-bound}]
Let us mention right away that to get \eqref{lower-bound}, we only need $g \in \BUC(\R^n) \cap \Lip(\R^n)$. Throughout this proof,  $C, C_1$ denote  constants depending only on $\|Dg\|_{L^\infty(\R^n)}$ and $K_0$, which might change from line to line.  

By scaling and translation, it suffices to prove that \eqref{lower-bound} holds for $(x,t)=(0,1)$.
In other words, we aim to show
\begin{equation}\label{3-class-1}
u^\ep(0,1) - u(0,1)  \geq -C  \ep.
\end{equation}

Without loss of generality, we may assume that $g(0)=0$ by considering $\tilde g=g(x)-g(0)$. Hence
\begin{equation}\label{g-bound}
|g(x)|\leq C|x| \quad \text{ for all } x\in \R^n.
\end{equation}
The optimal control formula \eqref{oc} gives us that
\[
u^{\ep}(0,1)=\inf_{\substack{  \eta(0)=0 \\ \eta \in \AC([-\ep^{-1},0])}}\left\{g\left(\ep\eta \left(-\ep^{-1}\right)\right)+\ep\int_{-\ep^{-1}}^{0}L(\eta(t), \dot \eta(t))\,dt\right\}.
\]
Due to \eqref{quad-growth-L} and Jensen's inequality,
\[
\ep\int_{-\ep^{-1}}^{0}L(\eta(t), \dot \eta(t))\,dt \geq  \ep\int_{-\ep^{-1}}^{0}\left(\frac{|\dot \eta(t)|^2}{2} - K_0 \right)\,dt \geq  {1\over 2}\ep^2\left|\eta (-\ep^{-1})\right|^2-K_0.
\]
Owing to \eqref{g-bound},  there exists $C>0$ such that 
\begin{equation}\label{LH-new}
u^{\ep}(0,1)=\inf_{\substack{\eta(0)=0, \\ \ep\left|\eta (-\ep^{-1})\right|\leq C}}\left\{g\left(\ep\eta \left(-\ep^{-1}\right)\right)+\ep\int_{-\ep^{-1}}^{0}L(\eta(t),\dot \eta(t))\,dt\right\}.
\end{equation}
Clearly, there exists $C_1>0$ such that for any $|q|\leq C$, 
\begin{equation}\label{Lbar-range}
\ol L(q)=\sup_{p\in  \R^n}\left\{p\cdot q-\ol H(p)\right\}=\sup_{|p|\leq C_1}\{p\cdot q-\ol H(p)\}.
\end{equation}
For  $p\in  \R^n$, let $v_p \in \Lip(\T^n)$ be  a viscosity solution to (E)$_p$ such that $v_p(0)=0$.  
Then for any Lipschitz continuous curve $\eta:[- \ep^{-1},0]\to \R^n$,  
\[
\int_{-\ep^{-1}}^{0}L(\eta(t),\dot \eta(t))+\overline H(p)\,dt\geq p\cdot \eta(0)-p\cdot \eta\left(-\ep^{-1}\right)+v_p(\eta(0))-v_p\left(\eta\left(-\ep^{-1}\right)\right).
\]
The above fact is easily seen in case $v_p$ is smooth as
\begin{align*}
&\int_{-\ep^{-1}}^{0}L(\eta(t),\dot \eta(t))+\overline H(p)\,dt = \int_{-\ep^{-1}}^{0}L(\eta(t),\dot \eta(t))+H(\eta(t),p+Dv_p(\eta(t)))\,dt\\
\geq & \int_{-\ep^{-1}}^{0}\dot \eta(t)\cdot (p+Dv_p(\eta(t)))\,dt = p\cdot \eta(0)-p\cdot \eta\left(-\ep^{-1}\right)+v_p(\eta(0))-v_p\left(\eta\left(-\ep^{-1}\right)\right).
\end{align*}
Otherwise, it can be obtained by using standard convolutions of $v_p$.
See \cite{Fa} for example.
Accordingly,  for $\eta(0)=0$ and $\ep\left|\eta (-\ep^{-1})\right|\leq C$, in light of  \eqref{osc-v-p} and \eqref{Lbar-range},
\begin{align*}
\ep\int_{-\ep^{-1}}^{0}(L(\eta(t),\dot \eta(t))\,dt&\geq \sup_{p\in \R^n}\left\{p\cdot \left(-\ep\eta\left(-\ep^{-1}\right)\right)-\overline H(p)+\ep v_p(0)-\ep v_p\left(\eta\left(-\ep^{-1}\right)\right)\right\}\\
&\geq \sup_{|p| \leq C_1}\left\{p\cdot \left(-\ep\eta\left(-\ep^{-1}\right)\right)-\overline H(p)+\ep v_p(0)-\ep v_p\left(\eta\left(-\ep^{-1}\right)\right)\right\}\\
&\geq \overline L\left(-\ep\eta\left(-\ep^{-1}\right)\right)-C\ep.
\end{align*}
We combine the above with  (\ref{LH-new}) to yield
\begin{align*}
u^{\ep}(0,1)&\geq \inf_{\substack{\eta(0)=0, \\ \ep\left|\eta (-\ep^{-1})\right|\leq C}}\left\{g\left(\ep\eta \left(-\ep^{-1}\right)\right)+\overline L\left(-\ep\eta\left(-\ep^{-1}\right)\right) \right\}-C\ep\\
&\geq \inf_{y\in \R^n}\left\{g(y)+\overline L(-y)\right\}-C\ep\\
&=u(0,1)-C \ep.
\end{align*}
The last equality in the above holds thanks to the Hopf-Lax formula for $u$.

\end{proof}

We now proceed to prove upper bounds \eqref{upper-bound} and \eqref{upper-bound-C2}. 
We first give a definition on backward characteristics (see calibrated curves in \cite{Fa}).  

\begin{defn} \label{def:back}
For each $p\in  \R^n$,  let $v_p \in \Lip(\T^n)$ be a viscosity solution to the cell problem {\rm (E)$_p$}.
Then, $\xi: (-\infty, 0]\to \R^n$ is called a backward characteristic of $v_p$ if 
\begin{equation}\label{char}
p\cdot \xi (t_1)+v_p(\xi (t_1))-p\cdot \xi (t_2)-v_p(\xi (t_2))=\int_{t_2}^{t_1}L(\xi (t),\dot \xi(t))+\overline H(p)\,dt
\end{equation}
for all $t_2<t_1\leq 0$. 

If $\xi$ is defined on $\R$ and the above equality holds for all $t_1, t_2\in \R$ with $t_2<t_1$, $\xi$ is called a global characteristic. 
\end{defn}
 Note that for a curve $\xi$ to be a backward characteristic (or global characteristic), it suffices to have (\ref{char}) for  $t_1=0$ and a sequence of $t_{2m}\to -\infty$ as $m\to +\infty$  (or two sequences $t_{1m}\to  +\infty$ and $t_{2m}\to -\infty$ as $m\to +\infty$). 

By approximating $H$ with smooth and strictly convex Hamiltonians, it is easy to show that for each $p\in \R^n$, 
there exists a viscosity solution $v_p \in \Lip(\T^n)$ of (E)$_p$ such that for every $y\in  \R^n$, 
there is a backward characteristic $\xi_y$ of $v_p$ with $\xi_y(0)=y$. 
The following lemma is a key step toward proving \eqref{upper-bound} and \eqref{upper-bound-C2}.

\begin{lem}\label{lem:rate-connection}
Fix $(x,t) \in \R^n \times (0,\infty)$.  Assume that $u$ is differentiable at  $(x,t)$ and $\ol{H}$ is differentiable at $p$  for $p=Du(x,t)$. 
Suppose that there exist a viscosity solution  $v_p \in \Lip(\T^n)$ of {\rm (E)$_{p}$}
and a backward characteristic $\xi:(-\infty, 0]\to \R^n$ of $v_p$ such that, for some given $C_p>0$ and  $\alpha\in (0,1]$, 
\[
\left |{\xi(s)-\xi(0)\over s}-D\ol H(p)\right|\leq {C_p \over |s|^{\alpha}}  \qquad \text{for all $s<0$}.
\]
Then
\begin{equation}\label{rate-Lip}
u^\ep(x,t) \leq u(x,t) + C C_p t^{1-\alpha} \ep^{\alpha} + C\ep.
\end{equation}
If we further assume that the initial data $g\in C^2(\R^n)$ with $M=\|D^2g\|_{C(\R^n)} <\infty$, then the above bound can be improved to 
\begin{equation}\label{rate-2-alpha}
u^\ep(x,t) \leq u(x,t) + M C_p^2 t^{2(1-\alpha)} \ep^{2\alpha} + C\ep.
\end{equation}
\end{lem}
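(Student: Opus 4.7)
The plan is to evaluate the optimal-control formula \eqref{oc} for $u^\ep(x,t)$ on a trial trajectory built from the backward characteristic $\xi$, and then compare the result with the Hopf--Lax formula for $u(x,t)$ at the differentiability point.

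A preliminary bookkeeping step handles the fact that $\xi(0)$ is in general not equal to $x/\ep$. Since both $v_p$ and $L(y,q)$ are $\Z^n$-periodic in $y$, for every $k\in\Z^n$ the shifted curve $\tilde\xi(\tau):=\xi(\tau)+k$ is still a backward characteristic of $v_p$ with the same slope bound. I choose $k$ closest to $x/\ep-\xi(0)$ so that $\tilde x:=\ep\tilde\xi(0)$ satisfies $|\tilde x-x|\leq\sqrt{n}\,\ep/2$; the universal Lipschitz bound $\|Du^\ep\|_{L^\infty}\leq C_0$ then reduces matters to bounding $u^\ep(\tilde x,t)$ up to an additive $C\ep$ error. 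Plugging $\eta=\tilde\xi$ into \eqref{oc}, evaluating the Lagrangian action via \eqref{char}, and absorbing the $v_p$-contribution into $C\ep$ via \eqref{osc-v-p}, I obtain, on setting $y^*:=\ep\tilde\xi(-t/\ep)$ and observing that $\tilde x-y^*=\ep(\xi(0)-\xi(-t/\ep))$,
\[
u^\ep(\tilde x,t)\leq g(y^*)+p\cdot(\tilde x-y^*)-t\ol H(p)+C\ep.
\]
The slope hypothesis applied at $s=-t/\ep$ gives $|y^*-(\tilde x-tD\ol H(p))|\leq CC_p t^{1-\alpha}\ep^\alpha$, and hence $|y^*-y_0|\leq CC_p t^{1-\alpha}\ep^\alpha+C\ep$ where $y_0:=x-tD\ol H(p)$.

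Differentiability of $u$ at $(x,t)$ together with differentiability of $\ol H$ at $p$ selects, via Hopf--Lax, the optimizer $y_0$, giving $u(x,t)=g(y_0)+p\cdot(x-y_0)-t\ol H(p)$; moreover, when $g$ is differentiable at $y_0$, the first-order condition forces $Dg(y_0)=p$. Subtracting yields
\[
u^\ep(\tilde x,t)-u(x,t)\leq\bigl[g(y^*)-g(y_0)\bigr]+p\cdot\bigl[(\tilde x-x)-(y^*-y_0)\bigr]+C\ep.
\]
For the Lipschitz case \eqref{rate-Lip}, the crude bound $|g(y^*)-g(y_0)|\leq\|Dg\|_{L^\infty}|y^*-y_0|$ together with $|\tilde x-x|\leq C\ep$ closes the estimate. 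For the $C^2$ case \eqref{rate-2-alpha}, I Taylor-expand $g(y^*)\leq g(y_0)+Dg(y_0)\cdot(y^*-y_0)+\tfrac{M}{2}|y^*-y_0|^2$; because $Dg(y_0)=p$, the linear term cancels exactly against the $-p\cdot(y^*-y_0)$ in the displayed inequality, leaving only the quadratic piece $\tfrac{M}{2}|y^*-y_0|^2=O(MC_p^2 t^{2(1-\alpha)}\ep^{2\alpha})$ on top of the $O(\ep)$ residual.

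The main obstacle is precisely this linear cancellation in the $C^2$ case: the Lagrangian action along the characteristic naturally produces the linear expression $p\cdot(\tilde x-y^*)$, and it is only the Hopf--Lax first-order condition $Dg(y_0)=p$, available precisely because $u$ is differentiable at $(x,t)$ and $\ol H$ is differentiable at $p$, that allows this linear contribution to be absorbed and the quadratic improvement to survive. A secondary technical point is the management of the integer shift $k$: the hypothesis supplies $\xi$ only at a single starting point, and that point must be aligned with $x/\ep$ to within $O(\ep)$ so that the universal Lipschitz bound on $u^\ep$ cleanly absorbs the discrepancy without spoiling either convergence rate.
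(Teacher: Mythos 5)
Your proof is correct and follows essentially the same route as the paper's: plug the backward characteristic into the optimal control formula, evaluate the action via the calibration identity \eqref{char}, absorb the $v_p$ and $\xi(0)$ terms into $O(\ep)$, identify $y_0 = x - tD\ol H(p)$ and $p = Dg(y_0)$ from the Hopf--Lax first-order condition, and use the slope hypothesis at $s=-t/\ep$ (plus the $C^2$ Taylor expansion for \eqref{rate-2-alpha}). The only cosmetic difference is that you re-center $\xi$ by an integer shift before invoking the Lipschitz bound on $u^\ep$, whereas the paper normalizes $\xi(0)\in[0,1]^n$ and applies the Lipschitz bound to move from $u^\ep(0,t)$ to $u^\ep(\ep\xi(0),t)$; these are interchangeable.
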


\begin{proof}  Note that $\|Du\|_{L^{\infty}(\R^n\times [0, \infty))}\leq \|Dg\|_{L^{\infty}(\R^n)}$. 
It suffices to prove the above for $(x,t)=(0,t)$.   
By the Hopf-Lax formula, 
\begin{align*}
u(0,t)&=\min_{y\in  \R^n}\left\{g(y)+t\overline L(-t^{-1}y)\right\}\\
&=g(y_0)+t\overline L(-t^{-1}y_0)
\end{align*}
for some $y_0 \in \R^n$.
Then $p=Du(0,t)\in  \partial \overline L(-t^{-1}y_0)$. Hence, $-t^{-1}y_0=D\overline H  (p)$ and also
\[
t \overline L(-t^{-1}y_0)=- y_0\cdot p - t\overline H(p).
\]
Let $v_p$ and $\xi$ be the viscosity solution and its backward characteristic from the assumption. 
By periodicity, we may assume that $\xi(0) \in [0,1]^n$.
By our assumption, 
\[
|y_0-\ep \xi(-\ep^{-1}t)+\ep\xi(0)|\leq C_p t^{1-\alpha}\ep^{\alpha},
\]
and hence
\[
|y_0 - \ep \xi(-\ep^{-1}t)| \leq C_p t^{1-\alpha}\ep^{\alpha} + C\ep.
\]
We use the above and optimal control formula of $u^\ep(0,t)$ to compute that
\begin{align*}
u^{\ep}(0,t)&\leq u^\ep(\ep\xi(0),t) + C\ep \leq g\left(\ep \xi \left({-\ep^{-1}}t\right)\right)+\ep\int_{-\ep^{-1}t}^{0}L( \xi(s),\dot \xi (s))\,ds+C\ep\\
&=g\left(\ep \xi\left({-\ep^{-1}}t\right)\right)-t\overline H(p)+p\cdot \left(-\ep \xi\left(-\ep^{-1}t\right)\right)+ p\cdot (\ep \xi(0))\\
& \qquad \qquad \qquad \qquad  \qquad \qquad \qquad \qquad  \quad   -\ep v_p \left( \xi\left(-\ep^{-1}t\right)\right)+\ep v_p(\xi(0))+C\ep\\
&\leq g(y_0)+(-y_0)\cdot p-t\overline H(p)+C C_{p} t^{1-\al} {\ep}^{\alpha} + C\ep\\
&= u(0,t)+CC_p t^{1-\alpha} \ep^{\alpha} + C\ep.
\end{align*}

Next we prove \eqref{rate-2-alpha}. If $g\in C^2(\R^n)$, then $p=Dg(y_0)$.  
Accordingly, we can refine the above calculation as following 

\begin{align*}
u^{\ep}(0,t)&\leq g\left(\ep \xi\left({-\ep^{-1}}t\right)\right)-t\overline H(p)+p\cdot \left(-\ep \xi\left(-\ep^{-1}t\right)\right)+ p\cdot (\ep \xi(0))\\
& \qquad \qquad \qquad \qquad  \qquad \qquad \qquad \qquad  \quad   -\ep v_p \left( \xi\left(-\ep^{-1}t\right)\right)+\ep v_p(\xi(0))+C\ep\\
&\leq g\left(\ep \xi\left({-\ep^{-1}}t\right)\right) + Dg(y_0)\cdot \left(-\ep \xi\left(-\ep^{-1}t\right)\right) - t\overline H(p)+ C\ep\\
&\leq g(y_0)+Dg(y_0)\cdot (-y_0) + {M\over 2}|y_0 - \ep \xi(-\ep^{-1}t)|^2 -t\overline H(p)+ C\ep\\
&\leq g(y_0)+p\cdot (-y_0)   -t\overline H(p)+M C_{p}^2 t^{2(1-\al)} {\ep}^{2\alpha} + C\ep\\
&= u(0,t)+MC_{p}^2 t^{2(1-\al)} {\ep}^{2\alpha} + C\ep.
\end{align*}

\end{proof}

\begin{rem}\label{rem:improve}

From the above argument,  it is easy to see that to obtain \eqref{rate-2-alpha},  the $C^2$ assumption can be relaxed to semi-concave assumption on $g$, i.e., $g(x)-C|x|^2$ is concave for some $C>0$. 

Moreover, if $\overline H$ is not differentiable at $p=Du(x,t)$, then $q_0=-t^{-1} y_0 \in \partial \ol H(p)$.  For each $\ep>0$,  we may replace  the backward characteristic by an ``approximate characteristic" $\xi_{\ep}: \left[-{t\over \ep}, 0\right]\to \R^n$ satisfying that
\[
\int_{-{t\over \ep}}^{0}L(\xi_{\ep}, \dot  \xi_{\ep})+\overline H(p)\,ds\leq p\cdot \left(\xi_{\ep}(0)-\xi_{\ep}\left(-{t\over \ep}\right)\right)+C_p,
\]
and
\[
\left |{\xi_\ep(s)-\xi_\ep(0)\over s}-q_0\right|\leq {C_p \over |s|^{\alpha}}  \qquad \text{for $s=-\frac{t}{\ep}$}.
\]
Sometimes, an ``approximate characteristic" can be constructed through proper combination of several backward characteristics associated with the same $p$. See Case 2 in the proof of Theorem \ref{Hedlund-optimal} for the Hedlund example. 
\end{rem}

Since $\|Du\|_{L^\infty(\R^n \times [0,\infty))} = \|Dg\|_{L^{\infty}(\R^n)}$ and $u$ is differentiable a.e. in $\R^n \times (0,\infty)$,  
by Lemma \ref{lem:rate-connection} and approximations, we have that 
\begin{cor}\label{cor:optimal-rate}
Assume that $\ol{H} \in C^1(\R^n)$.
Assume further that for every $|p|\leq \|Dg\|_{L^{\infty}(\R^n)}$,  there exist a viscosity solution  $v_p \in \Lip(\T^n)$ of {\rm (E)$_{p}$} and a backward characteristic $\xi:(-\infty, 0]\to \R^n$ of $v_p$ such that, for some $C>0$ independent of $p$, 
\[
\left |{\xi(s)-\xi(0)\over s}-D\ol H(p)\right|\leq {C \over |s|}  \qquad \text{for all $s<0$}.
\]
Then
\begin{equation}
u^\ep(x,t) \leq u(x,t) +C\ep \quad \text{for all $(x,t)\in \R^n\times [0, \infty)$}.
\end{equation}
\end{cor}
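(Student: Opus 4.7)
The plan is to apply Lemma \ref{lem:rate-connection} pointwise with $\alpha = 1$ at every $(x,t)$ at which $u$ is differentiable, and then extend the resulting bound to the entire domain by continuity. Under our standing assumptions the solution $u$ to (C) is Lipschitz on $\R^n \times [0,\infty)$ with $\|Du\|_{L^\infty} \leq \|Dg\|_{L^\infty(\R^n)}$, so by Rademacher's theorem $u$ is differentiable at a.e.\ $(x,t) \in \R^n \times (0,\infty)$. At any such differentiability point, the gradient $p = Du(x,t)$ satisfies $|p| \leq \|Dg\|_{L^\infty(\R^n)}$, so the hypothesis of the corollary supplies a viscosity solution $v_p \in \Lip(\T^n)$ of (E)$_p$ and a backward characteristic $\xi$ whose average slope approaches $D\overline H(p)$ at rate $1/|s|$ with a constant $C$ independent of $p$. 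Moreover, since $\overline H \in C^1(\R^n)$, $\overline H$ is differentiable at $p$, so all the hypotheses of Lemma \ref{lem:rate-connection} are satisfied.

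Applying Lemma \ref{lem:rate-connection} with $\alpha = 1$ and with $C_p$ replaced by the uniform $C$ then yields, at every such differentiability point,
\[
u^\ep(x,t) \leq u(x,t) + C\cdot t^{1-1}\cdot \ep + C\ep = u(x,t) + C'\ep,
\]
where $C'$ depends only on $H$ and $\|Dg\|_{L^\infty(\R^n)}$. The crucial point is that the factor $t^{1-\alpha}$ collapses to $1$ precisely when $\alpha = 1$, so the bound is uniform in $t$.

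To conclude, since $u^\ep$ and $u$ are both continuous on $\R^n \times [0,\infty)$ and the set of differentiability points of $u$ is dense in $\R^n \times (0,\infty)$, the pointwise bound extends to every $(x,t) \in \R^n \times (0,\infty)$; at $t=0$ it is trivial because $u^\ep(x,0) = u(x,0) = g(x)$. I do not anticipate any serious obstacle: the only mildly delicate point is the genuine uniformity of the constant $C$ in $p$ on the compact ball $\{|p| \leq \|Dg\|_{L^\infty(\R^n)}\}$, which is exactly what the corollary's hypothesis guarantees. Had $C_p$ been allowed to vary with $p$, one would need additional control (such as upper semicontinuity in $p$) to make the selection argument go through, but no such refinement is needed here.
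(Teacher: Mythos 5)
Your proposal is correct and follows essentially the same route as the paper: the authors also derive the corollary by applying Lemma \ref{lem:rate-connection} with $\alpha=1$ at the a.e.\ differentiability points of $u$ (where $|Du(x,t)|\leq \|Dg\|_{L^\infty(\R^n)}$ and $\ol H\in C^1$ supplies the needed differentiability of $\ol H$ at $p$), and then pass to all $(x,t)$ by approximation/continuity. Your observation that $t^{1-\alpha}$ collapses to $1$ when $\alpha=1$, making the bound uniform in $t$, is exactly the point of the corollary.
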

\medskip

It is well-known that, for a given backward characteristic $\xi$ of $v_p$,
 \begin{equation}\label{conv-rot-1}
{\xi(s)-\xi(0)\over s} \to q \in  \partial \overline H(p) \quad{ as } \quad  s \to -\infty,
 \end{equation}
 by passing to a subsequence if necessary.
 See \cite{WE, EG, Fa, Go}. However, a convergence rate of \eqref{conv-rot-1} is in general not available without extra assumptions.  
 The following lemma says that if   $\overline H$  is twice differentiable at  $p$, 
 then the exponent $\alpha$ in the above lemma is at least ${1\over 2}$.
See \cite[Theorem 1]{Go} for a similar result. The proof is a simple refinement of that of \eqref{conv-rot-1}.

\begin{lem}\label{lem:rotation-rate} 
Assume $\ol{H}$ is twice differentiable at $p\in \R^n$, that is, $p\in S$.  
Let  $\xi :(-\infty, 0]\to \R^n$ be a  backward characteristic of $v_p\in \Lip(\T^n)$, a solution of {\rm (E)$_p$}. 
Then, there exists $C_p>0$ depending on $H, \ol{H}, p$ and $\|Dg\|_{L^\infty(\R^n)}$ such that
\[
\left |{\xi(t)-\xi(0)\over t}-D\overline H(p)\right|\leq {C_p\over \sqrt{|t|}}  \quad \text{for $t <0$}.
\]
\end{lem}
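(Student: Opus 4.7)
The proof is a quantitative refinement of the classical argument that $\xi(s)/s \to D\overline{H}(p)$ as $s\to-\infty$. Write $\tau = -t > 0$ and $v(\tau) := (\xi(0)-\xi(-\tau))/\tau$. The plan is to probe $v(\tau)$ against nearby cell solutions $v_{p+w}$ and then exploit the second-order expansion of $\overline H$ at $p$ to turn the resulting inequality into a rate.

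First, I would combine the characteristic identity (\ref{char}) applied to $\xi,v_p$ with $t_1=0$, $t_2=-\tau$ and the standard Fenchel-type subsolution bound
\[
\int_{-\tau}^{0} L(\xi,\dot\xi)\,ds + \overline H(p+w)\tau \;\geq\; (p+w)\cdot(\xi(0)-\xi(-\tau)) + v_{p+w}(\xi(0)) - v_{p+w}(\xi(-\tau)),
\]
valid for any $w\in\R^n$ and any Lipschitz cell solution $v_{p+w}$ of (E)$_{p+w}$. Subtracting, dividing by $\tau$, and using periodicity together with the oscillation bound (\ref{osc-v-p}) yield, for every $|w|\leq 1$,
\[
w\cdot v(\tau) - \bigl[\overline H(p+w)-\overline H(p)\bigr] \;\leq\; \frac{C_p}{\tau}.
\]
Since $p\in S$, Alexandrov's expansion supplies $\delta, K_p>0$ (depending only on $\overline H$ and $p$) such that $\overline H(p+w) \leq \overline H(p) + D\overline H(p)\cdot w + K_p|w|^2$ for $|w|\leq \delta$. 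Inserting this gives
\[
w\cdot\bigl(v(\tau) - D\overline H(p)\bigr) \;\leq\; K_p|w|^2 + \frac{C_p}{\tau}.
\]
Choosing $w = \lambda(v(\tau)-D\overline H(p))/|v(\tau)-D\overline H(p)|$ and optimizing $\lambda\in (0,\delta]$ (the optimum is $\lambda\sim\sqrt{C_p/(K_p\tau)}$) produces $|v(\tau)-D\overline H(p)| \leq C_p'/\sqrt{\tau}$ provided $\tau \geq C_p/(K_p\delta^2)$. For smaller $\tau$, the bound follows from the a priori estimate $|v(\tau)|\leq C_p$, obtained from the characteristic identity, the quadratic growth (\ref{quad-growth-L}) of $L$, and Cauchy--Schwarz, after enlarging $C_p'$.

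The key conceptual point---and what I expect to require the most care---is that only a one-sided quadratic \emph{upper} bound on $\overline H$ at $p$ is used; no companion lower bound and no smoothness of $\overline H$ away from $p$ are needed. This is precisely what Alexandrov twice differentiability at $p$ provides, via the coincidence of the upper and lower second-order expansions modulo $o(|w|^2)$, and it is what lets the constant depend only on $H, \overline H, p$, and $\|Dg\|_{L^\infty(\R^n)}$ (the latter entering only by restricting to bounded $p$ through the comparison-principle bound on $Du^\ep$), with no regularity assumption on $H$ nor any continuous selection $p'\mapsto v_{p'}$.
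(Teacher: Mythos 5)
Your proposal is correct and follows essentially the same route as the paper's proof: test the characteristic identity for $v_p$ against the subsolution inequality for a nearby cell solution $v_{\tilde p}$, control the oscillation terms by \eqref{osc-v-p}, invoke the one-sided second-order expansion of $\ol H$ at $p$, and pick the perturbation along the error direction with magnitude of order $|t|^{-1/2}$ (you optimize the step size $\lambda$, the paper fixes $|\tilde p-p|=|t|^{-1/2}$ and reduces to $t\le -1$, which is the same scaling). No gap; the only cosmetic difference is your separate treatment of small $|t|$ via the a priori speed bound, which the paper absorbs by restricting to $t\le -1$.
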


\begin{proof}  
It suffices to consider $t\leq -1$. Set
\[
w={{\xi (t)-\xi(0)\over t}}-D\overline H(p).
\]
There is nothing to prove if $w=0$. We therefore may assume that $w \neq 0$.

By the definition of backward characteristics,
\[
p\cdot \xi(0)-p\cdot \xi(t)+v_p(\xi (0))-v_p(\xi (t))=\int_{t}^{0}L( \xi(s),\dot \xi(s))+\overline H(p)\,ds.
\]
For $\tilde p\in  \R^n$, let $v_{\tilde p} \in \Lip(\T^n)$ be a solution to (E)$_{\tilde p}$.  It is clear that 
\[
\tilde p\cdot \xi(0)-\tilde p\cdot \xi(t)+v_{\tilde p}(\xi (0))-v_{\tilde p}(\xi (t))\leq \int_{t}^{0}L( \xi(s),\dot\xi(s) )+\overline H(\tilde p)\,ds.
\]
Accordingly, for $|\tilde p-p|\leq 1$, 
\begin{equation}\label{conv-rot-2}
\overline H(\tilde p)-\overline H(p)\geq  {(\tilde p-p)\cdot {\xi (t)-\xi(0)\over t}}-{{C(1+|p|)}\over |t|}.
\end{equation}
Since $H$ is twice differentiable at $p$, there exists a constant $C_p>0$ such that, for $|\tilde p-p|\leq 1$, 
\[
\ol{H}(\tilde p)  \leq \ol{H}(p) + D\ol{H}(p)\cdot (\tilde p - p) + C_p |\tilde p - p|^2.
\]
Combine this with the above inequality to deduce that
\[
C_p |\tilde p-p|^2\geq   (\tilde p-p)\cdot \left( {{\xi (t)-\xi(0)\over t}}-D\overline H(p)\right)- {{C(1+|p|)}\over |t|}.
\]
Choose $\tilde p=p+{1\over \sqrt{|t|}}{w\over  |w|} $ to complete the proof.
\end{proof}

We are now ready to obtain \eqref{upper-bound} and \eqref{upper-bound-C2}.

\begin{proof}[Proof of upper bounds \eqref{upper-bound} and \eqref{upper-bound-C2}]   
Inequalities \eqref{upper-bound} and \eqref{upper-bound-C2} follow immediately from Lemmas \ref{lem:rate-connection} and \ref{lem:rotation-rate}.
\end{proof}

\subsection{Improved upper bound}

We now give some further situations that the upper bound  of $u^\ep -u$ can be improved to $O(\ep)$ fully.
The first situation concerns the flat part of $\ol{H}$.

\begin{cor}\label{cor:flat}  
Fix $(x,t)\in \R^n \times (0,\infty)$.
Assume that $u$ is differentiable at $(x,t)$ with $p_0=Du(x,t)$.
Assume that $p_0$ is an interior point of $F=\left\{\ol{H}=\min \ol H\right\}$, the flat part of $\ol{H}$.

{\rm (i)} If we have further that 
\begin{equation}\label{mm-equal}
\max_{y\in \T^n} \min_{p\in \R^n} H(y,p)= \min_{p\in \R^n}  \max_{y\in \T^n} H(y,p),
\end{equation}
 then
\[
|u^{\ep}(x,t)-u(x,t)|\leq C\ep.
\]
Here and below, $C$ depends only on $H, \|Dg\|_{L^\infty(\R^n)}$.

{\rm (ii)} If we do not have \eqref{mm-equal}, then we still have
\[
|u^{\ep}(x,t)-u(x,t)|\leq \frac{C\ep}{\del},
\]
where $\del = {\rm dist}(p_0, \partial F)>0$.
\end{cor}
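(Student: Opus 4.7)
The lower bound $u^\ep(x,t) \geq u(x,t) - C\ep$ in both settings is immediate from Theorem~\ref{thm:nd}(i), so the task reduces to the matching upper bound. Since $p_0 \in \inter F$, $\ol H$ is locally constant at $p_0$, so $D\ol H(p_0)=0$ and $\ol H(p_0) = \min \ol H$. The Hopf--Lax representation of $u$ then forces the minimizer to be $y_0 = x$, giving $u(x,t) = g(x) + t\ol L(0) = g(x) - t \min \ol H$. My plan is to apply Lemma~\ref{lem:rate-connection} with $\alpha=1$. Because $D\ol H(p_0)=0$, its hypothesis reduces to producing a viscosity solution $v_{p_0}$ of {\rm (E)$_{p_0}$} together with a backward characteristic $\xi$ of $v_{p_0}$ satisfying $\sup_{s\leq 0}|\xi(s)-\xi(0)| \leq C_{p_0}$; the conclusion of Lemma~\ref{lem:rate-connection} is then $u^\ep(x,t) \leq u(x,t) + (CC_{p_0}+C)\ep$.

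For part (i), I would first establish the sandwich inequality
\[
\max_{y\in\T^n}\min_{p\in\R^n} H(y,p) \,\leq\, \min_{p\in\R^n}\ol H(p) \,\leq\, \min_{p\in\R^n}\max_{y\in\T^n} H(y,p).
\]
The right-hand inequality follows because $v\equiv 0$ is a classical subsolution of $H(y,p+Dv) \leq \max_y H(y,p)$, so $\ol H(p) \leq \max_y H(y,p)$. The left-hand one uses the viscosity solution property of any cell solution $v_p$: at every point of differentiability of $v_p$ (a.e. by Rademacher) one has $H(y,p+Dv_p(y))= \ol H(p)$, hence $\min_q H(y,q)\leq \ol H(p)$ a.e., and by continuity of $y\mapsto \min_q H(y,q)$ for every $y$. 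Under hypothesis \eqref{mm-equal} both bounds collapse to a common value $c = \min \ol H = \ol H(p_0)$. Choose $y^* \in \T^n$ achieving $\min_p H(y^*,p) = c$. Legendre duality then gives $L(y^*,0) = -\min_p H(y^*,p) = -\ol H(p_0)$, so $L(y^*,0) + \ol H(p_0) = 0$. Consequently the constant curve $\xi(s) \equiv y^*$ satisfies the calibration identity of Definition~\ref{def:back} trivially (both sides equal $0$) with respect to any cell solution $v_{p_0}$; applying Lemma~\ref{lem:rate-connection} with $C_{p_0}=0$ closes part (i).

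For part (ii), I would invoke the inequality derived inside the proof of Lemma~\ref{lem:rotation-rate}, which says that for every backward characteristic $\xi$ of $v_{p_0}$ and every $\tilde p$ with $|\tilde p - p_0| \leq 1$,
\[
\ol H(\tilde p) - \ol H(p_0) \,\geq\, (\tilde p - p_0)\cdot \frac{\xi(s) - \xi(0)}{s} - \frac{C(1+|p_0|)}{|s|}, \qquad s<0.
\]
Because $B(p_0, \delta) \subset F$, the left-hand side vanishes whenever $|\tilde p - p_0| \leq \min(\delta, 1)$. Choosing $\tilde p - p_0 = \min(\delta,1)\,\hat w$ with $\hat w$ a unit vector aligned with $(\xi(s)-\xi(0))/s$ yields $|\xi(s) - \xi(0)| \leq C(1+|p_0|)/\min(\delta,1)$. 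Absorbing the regime $\delta\geq 1$ into a universal constant gives $|\xi(s)-\xi(0)| \leq C_{p_0}/\delta$, and Lemma~\ref{lem:rate-connection} with $C_{p_0}=C/\delta$ finishes part (ii).

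The main obstacle is largely bookkeeping rather than conceptual. In (i) the delicate point is the sandwich inequality, which requires the viscosity solution interpretation to be invoked carefully to compensate for non-differentiability of $v_p$; once this is in place, the constant calibrated curve drops out of Legendre duality. In (ii) the delicate step is the directional optimization together with the mild restriction $|\tilde p - p_0| \leq 1$ inherited from the Lipschitz bounds on cell solutions that underlie Lemma~\ref{lem:rotation-rate}.
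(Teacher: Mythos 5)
Your proposal is correct and follows essentially the same route as the paper: part (i) uses the min--max sandwich for $\min\ol H$ to locate a point $y^*$ with $L(y^*,0)+\ol H(p_0)=0$, takes the constant curve there as a backward characteristic, and invokes Lemma \ref{lem:rate-connection}; part (ii) reuses inequality \eqref{conv-rot-2} from the proof of Lemma \ref{lem:rotation-rate} together with the flatness of $\ol H$ on $B(p_0,\del)$ to get the $O(1/(\del|t|))$ bound on the average slope, then applies Lemma \ref{lem:rate-connection}. The only differences are expository (your explicit proof of the sandwich inequality and the $\min(\del,1)$ bookkeeping, harmless since $F$ is bounded), so no changes are needed.
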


\begin{proof}  
We only need to show that  $u^{\ep}(x,t)\leq u(x,t)+C\ep$.  

Let us first prove (i). From the cell problem (E)$_p$ for $p\in \R^n$, it is easy to see that
\[
\max_{y\in \T^n} \min_{p\in \R^n} H(y,p)\leq \min_{p\in\R^n} \ol H(p)\leq \min_{p\in \R^n}  \max_{y\in \T^n} H(y,p).
\]
Also, since $\min_{p \in \R^n} H(y,p) = -L(y,0)$ for each $y\in \T^n$, it is clear that
\[
\max_{y\in \T^n} \min_{p\in \R^n} H(y,p)=\max_{y\in \T^n}(-L(y,0)).
\]
Since $\ol H(p_0)=\min_{p\in\R^n} \ol H(p)$ and \eqref{mm-equal} holds, we have that
\[
\ol H(p_0) =\max_{y\in \T^n} \min_{p\in \R^n} H(y,p) =-L(x_0,0)
\]  
for some  $x_0\in[0,\,1]^n$. 
Note   that $D\ol H(p_0)=0$.  
Set 
$\xi(t)=x_0$ for $t\leq 0$, and observe that $\xi$ is a backward characteristic of  any solution 
$v_{p_0}$ of (E)$_{p_0}$. 
We then use the fact that
\[
\left| \frac{\xi(t)-\xi(0)}{t} - D\ol{H}(p_0) \right|  =0,
\]
and \eqref{rate-Lip} to conclude.

We now prove (ii). 
For this, we repeat the proof of Lemma \ref{lem:rotation-rate} and note that, for $|\tilde p - p_0 | \leq \del$,
\[
0 = \ol{H}(\tilde p) - \ol{H}(p_0) \geq (\tilde p-p_0)\cdot \left( {{\xi (t)-\xi(0)\over t}}\right)- {{C(1+|p_0|)}\over |t|}.
\]
Thus,
\[
\left |{\xi(t)-\xi(0)\over t}\right| \leq  \frac{C(1+|p_0|)}{\del|t|} \leq \frac{C}{\del |t|},
\]
which, together with Lemma  \ref{lem:rate-connection}, yields the conclusion.
\end{proof}

\begin{rem}\label{rem:flat}
In Corollary \ref{cor:flat}(i), we in fact only need to assume that $p_0 \in F$ such that $\ol{H}$ is differentiable at $p_0$.
Of course, when $p_0$ is an interior point of $F$, we automatically have that $\ol{H}$ is differentiable at $p_0$ and $D\ol{H}(p_0)=0$.

Also, an example of $H$ satisfying \eqref{mm-equal} is a separable Hamiltonian of the form $H(y,p)=F(p)+V(y)$ for $(y,p) \in \R^n \times \R^n$. 

\end{rem}

If initial data $g$ is concave, then the optimal convergence rate $O(\ep)$ can be derived in a different way. 
In this situation, we only assume that $g\in \Lip(\R^n)$ and do not require that $g$ is bounded.
\begin{thm}\label{thm:concave}
Suppose that $g\in \Lip(\R^n)$ is concave. Then
\[
|u^\ep(x,t)-u(x,t)|\leq C\ep \quad \text{ for all } (x,t) \in \R^n \times [0,\infty).
\]
The constant $C$ depends on $H, \|Dg\|_{L^\infty(\R^n)}$.
\end{thm}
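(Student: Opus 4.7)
The plan is to treat the lower and upper bounds separately. For the lower bound $u^\ep \geq u - C\ep$, inspection of the proof of Theorem \ref{thm:nd}(i) shows that it uses only the Lipschitz bound $|g(x)| \leq \|Dg\|_{L^\infty(\R^n)}|x|$ (after translating so that $g(0)=0$), the Hopf-Lax formula for $u$, and the optimal control formula \eqref{oc} for $u^\ep$; boundedness of $g$ is nowhere invoked, and all three ingredients remain valid for linearly growing concave Lipschitz data. So the lower bound is free.

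For the upper bound I exploit concavity by writing $g$ as the infimum envelope of its supporting hyperplanes. Set
\[
g^\sharp(q) := \sup_{y \in \R^n} \{g(y) - q\cdot y\},
\]
which is finite only for $q$ in a subset of $\{|q| \leq \|Dg\|_{L^\infty(\R^n)}\}$, and
\[
g(x) = \inf\{\phi_q(x) : g^\sharp(q) < \infty\}, \qquad \phi_q(x) := q\cdot x + g^\sharp(q).
\]
For each admissible $q$, let $u^\ep_q$ and $u_q$ denote the viscosity solutions of $(C)_\ep$ and $(C)$ with initial datum $\phi_q$. Since $\phi_q \geq g$, comparison yields $u^\ep \leq u^\ep_q$. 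For affine initial data the Hopf-Lax formula reduces to $u_q(x,t) = \phi_q(x) - t\,\ol H(q)$, and substituting the envelope representation of $g$ into $u(x,t) = \inf_z\{g(z) + t\,\ol L((x-z)/t)\}$ and swapping infima (a short change of variable using $\ol H = \ol L^*$) yields the dual representation $u(x,t) = \inf_q u_q(x,t)$.

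The crux is the single-corrector estimate $u^\ep_q \leq u_q + C\ep$, uniformly in admissible $q$. Let $v_q \in \Lip(\T^n)$ be a solution of ${\rm (E)}_q$, normalized so that $\min_{\T^n} v_q = 0$. Set
\[
\psi^\ep(x,t) := u_q(x,t) + \ep\, v_q(x/\ep) = \phi_q(x) - t\,\ol H(q) + \ep\, v_q(x/\ep).
\]
Because $u_q$ is affine in $x$, the chain rule in the viscosity sense identifies $D_x \psi^\ep$ with $q + Dv_q(\cdot/\ep)$, and then ${\rm (E)}_q$ together with $\partial_t \psi^\ep = -\ol H(q)$ forces $\psi^\ep$ to be an \emph{exact} viscosity solution of $(C)_\ep$ on $\R^n \times (0,\infty)$. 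Since $v_q \geq 0$, we have $\psi^\ep(x,0) \geq \phi_q(x) = u^\ep_q(x,0)$, and comparison gives $u^\ep_q \leq \psi^\ep \leq u_q + \ep \|v_q\|_{L^\infty(\T^n)}$. The oscillation estimate \eqref{osc-v-p} bounds $\|v_q\|_{L^\infty(\T^n)} \leq 2\sqrt n(|q|+K_0) \leq C$ with $C$ depending only on $K_0$ and $\|Dg\|_{L^\infty(\R^n)}$. Taking the infimum over admissible $q$ in $u^\ep \leq u^\ep_q \leq u_q + C\ep$ yields $u^\ep(x,t) \leq \inf_q u_q(x,t) + C\ep = u(x,t) + C\ep$.

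The only real observation is that affine initial data couple perfectly with the standard two-scale ansatz $u + \ep v(\cdot/\ep, Du)$: the macroscopic gradient being a \emph{constant} $q$ removes the usual obstruction (lack of a continuous selection of $v_p$ in $p$) and renders the first-order corrector $\ep v_q(x/\ep)$ exact rather than merely asymptotic. The ``main difficulty'' is thus pure bookkeeping---ensuring $C$ can be taken independent of $q$ over the admissible range---which is immediate from \eqref{osc-v-p}.
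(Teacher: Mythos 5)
Your proof is correct and follows essentially the same core idea as the paper's: represent the concave $g$ as an infimum of affine functions, observe that for affine data $\phi_q$ the two-scale ansatz $\phi_q(x)-t\ol H(q)+\ep v_q(x/\ep)$ is an \emph{exact} viscosity solution of $(C)_\ep$, and use the corrector oscillation bound \eqref{osc-v-p} to control the $O(\ep)$ error. The differences are organizational. You parametrize the supporting hyperplanes by their slope $q$ (via $g^\sharp$) where the paper parametrizes by the tangency point $z$ (via $p_z\in D^+g(z)$) — these give the same family. More substantively, the paper compares $u^\ep$ against a single function $\tilde u^\ep=\inf_z\{\cdot\}$, invoking that the infimum of a family of solutions of a convex HJ equation is again a solution; this single comparison delivers both the upper and the lower bound simultaneously (since $\tilde u^\ep$ differs from $u$ by exactly the corrector term, whose oscillation is $O(\ep)$). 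You instead compare per-$q$ to obtain only the upper bound $u^\ep\le u^\ep_q\le u_q+C\ep$, then appeal to Theorem \ref{thm:nd}(i) for the lower bound; this is fine, and your observation that the proof of \eqref{lower-bound} never uses boundedness of $g$ (only $|g(x)-g(0)|\le\|Dg\|_\infty|x|$, the control formula, and Hopf--Lax) is correct and necessary here since concave Lipschitz $g$ is typically unbounded. The paper's route is a little more economical because it avoids reproving the lower bound, but both are valid and rely on the same ingredients.
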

We give a PDE proof here, which might be of independent interest.

\begin{proof}  
Since $g$ is Lipschitz continuous and concave, we write that
\[
g(x)=\inf_{z\in \mathbb{R}^n}\left\{p_z\cdot (x-z)+g(z)\right\}.
\]
Here $p_z\in D^+ g(z)$ for $z\in \mathbb R^n$. 
It is important noting that, as $g\in \Lip(\R^n)$, we always have, for $z\in \mathbb R^n$,
\[
|p_z| \leq \|Dg\|_{L^\infty(\R^n)}.
\]
Denote by
\[
{\tilde u}^{\ep}(x,t)=\inf_{z\in \mathbb{R}^n}\left\{p_z\cdot (x-z)+g(z)+\ep v\left({x\over \ep},p_z\right)-\ol H(p_z)t\right\}.
\]
Then, ${\tilde u}^\ep$ is  a viscosity solution to (C)$_\ep$ with initial data
\[
\tilde u^\ep (x,0) = \inf_{z\in \mathbb{R}^n}\left\{p_z\cdot (x-z)+g(z)+\ep v\left({x\over \ep},p_z\right)\right\}
\]
 as it is the $\inf$ of a family of solutions to (C)$_\ep$ and $H$ is convex.
Here, $v(y,p_z)$ is a viscosity to the cell problem  (E)$_{p_z}$ with $v(0,p_z)=0$ for $z\in \mathbb R^n$.  
Since
\[
\|g-{\tilde u}^{\ep}(\cdot,0)\|_{L^\infty(\R^n)}\leq C\ep,
\]
we use the comparison principle to imply
\[
|u^{\ep}(x,t)-{\tilde u}^{\ep}(x,t)|\leq C\ep \quad \text{for all $(x,t)\in \R^n\times [0, \infty)$}.
\]
Finally, by convexity of $\ol{H}$, we note
\[
u(x,t)=\inf_{z\in \mathbb{R}^n}\left\{p_z\cdot (x-z)+g(z)-\ol H(p_z)t\right\}
\]
to conclude the proof.
\end{proof}

\subsection{Hedlund's Example}

When $n\geq 3$, the only well-understood and  interesting example is the classical Hedlund example (see  \cite[Section 5]{Ba1989} for details).  
Let us consider the simplest case in three dimensions ($n=3$)
\begin{equation}\label{Hedlund}
H(y,p)={1\over a (y)}|p| \quad \text{ for all } (y,p) \in \R^3 \times \R^3,
\end{equation}
where $a: \R^3\to  \left[\delta, {1+\delta}\right]$ is a smooth $\Z^3$-periodic function satisfying

\begin{itemize}
\item[(i)] $a\geq 1$ outside $U_{\delta}(\mathcal{L})$ and $\min_{ \R^3}a={\delta }$;

\item[(ii)] $a(y)={\delta}$ if and only if $y\in  \mathcal {L}$.
\end{itemize}
Here,
\[
\mathcal{L}=\bigcup_{i=1}^{3}\,\left(l_i+\Z^3 \right)
\]
where $l_1=\R\times \{0\}\times \{0\}$, $l_2=\{0\}\times \R\times \{{1\over 2}\}$ and $l_3=\{{1\over 2}\}\times \{{1\over 2}\}\times \R$ 
are straight lines in $\R^3$. 
The constant $\delta\in (0, 10^{-2})$ is fixed, and $U_{\delta}$ is the Euclidean $\delta$-neighborhood of $\mathcal {L}$, 
which is basically the union of tubes. 
For $1 \leq i \leq 3$, each $l_i$ is a minimizing geodesic for the Riemannian metric $ds^2=a(y)^2\sum_{i=1}^{3}dy_{i}^{2}$. In particular, they are also trajectories in Mather sets.

\begin{figure}[h]
\begin{center}

\begin{tikzpicture}[x={(-0.2cm,-0.4cm)}, y={(1cm,0cm)}, z={(0cm,1cm)}]
\draw[-latex, densely dashed] (0,0,0) coordinate(O) -- (5,0,0) node[above]{$y_1$};
\draw[-latex, densely dashed] (0,0,0) coordinate(O) -- (0,3,0) node[above]{$y_2$};
\draw[-latex, densely dashed] (0,0,0) coordinate(O) -- (0,0,3) node[right]{$y_3$};
\draw (2,0,0)--(2,2,0)--(0,2,0)--(0,2,2)--(2,2,2)--(2,2,0);
\draw (2,0,0)--(2,0,2)--(0,0,2)--(0,2,2)--(2,2,2)--(2,0,2);

\draw[red] (3,0,0) ellipse (0.1cm and 0.08cm);
\draw[red, densely dashed] (-2,0,0) ellipse (0.1cm and 0.08cm);
\draw[red, densely dashed] (1,0,0) ellipse (0.1cm and 0.08cm);
\draw[red, densely dashed] (-2,0.1,0)--(3,0.1,0);
\draw[red, densely dashed] (-2,-0.1,0)--(3,-0.1,0);
\draw[red] (2,0.1,0)--(3,0.1,0);
\draw[red] (1.7,-0.1,0)--(3,-0.1,0);

\draw[cyan, densely dashed ] (0,-1,1) ellipse (0.08cm and 0.1cm);
\draw[cyan, densely dashed ] (0,0,1) ellipse (0.08cm and 0.1cm);
\draw[cyan,  densely dashed] (0,2.5,1) ellipse (0.08cm and 0.1cm);
\draw[cyan, densely dashed] (0,-1,0.9)--(0,2.5,0.9);
\draw[cyan, densely dashed] (0,-1,1.1)--(0,2.5,1.1);
\draw[cyan] (0,2,0.9)--(0,2.5,0.9);
\draw[cyan] (0,2,1.1)--(0,2.5,1.1);
\draw[cyan] (0,-1,0.9)--(0,-0.4,0.9);
\draw[cyan] (0,-1,1.1)--(0,-0.4,1.1);

\draw[blue, densely dashed] (1,1,-1) ellipse (0.1cm and 0.08cm);
\draw[blue, densely dashed] (1,1,0) ellipse (0.1cm and 0.08cm);
\draw[blue] (1,1,3) ellipse (0.1cm and 0.08cm);
\draw[blue, densely dashed] (1,1.1,-1)--(1,1.1,3);
\draw[blue, densely dashed] (1,0.9,-1)--(1,0.9,3);
\draw[blue] (1,1.1,-1)--(1,1.1, -0.4);
\draw[blue] (1,0.9,-1)--(1,0.9, -0.4);
\draw[blue] (1,0.9,2)--(1,0.9,3);
\draw[blue] (1,1.1,2)--(1,1.1,3);

\end{tikzpicture}
\caption{Shape of $U_\delta(\cL)$} \label{fig:Hedlund}
\end{center}
\end{figure}

 It was proved in \cite{Ba1989} (in the dual form of the stable norm)  that
\[
\overline {H}(p)={1\over \delta}\max\left\{|p_1|,  |p_2|,  |p_3|\right\}.
\]
See  Lemma \ref{lem:formula-Hbar} below for a similar formula.

Note that by the explanation at the beginning of this section,  $H$ can be treated as a superlinear Hamiltonian 
(that is, we can replace $H$ by a superlinear  $\tilde H$ such that $\tilde H\geq H$ and $\tilde H=H$ when $|p|\leq C$ for a suitable constant $C$). Clearly, the corresponding Lagrangian $L$ satisfies that 
\begin{equation}\label{L-Hedlund}
L(y,q)=0  \quad \text{ for  $|q|\leq {1\over a(y)}$}.
\end{equation}
Define, for $t\in \R$,
\[
\xi_1(t)=\left(\frac{t}{\del},0,0\right),  \quad \xi_2(t)=\left(0, \frac{t}{\del}, {1\over 2}\right),  \quad \xi_3(t)=\left({1\over 2}, {1\over 2}, \frac{t}{\del}\right).
\]
Although the Hedlund example demonstrates the limitation of the Aubry-Mather theory in higher dimensions, 
we are still able to obtain the optimal convergence rate  for the corresponding homogenization problem. 

\begin{thm}\label{Hedlund-optimal} 
Let $H$ be given by \eqref{Hedlund} with $a(\cdot)$ satisfying {\rm (i)--(ii)} listed right below \eqref{Hedlund}
and $g\in \BUC(\R^3) \cap \Lip(\R^3)$. Then 
\[
\|u^{\ep}-u\|_{L^\infty(\R^3 \times [0,\infty))} \leq C\ep.
\]
Here, $C$ is a  constant depending only on $a(\cdot)$ and $\|Dg\|_{L^{\infty}(\R^3)}$. 
\end{thm}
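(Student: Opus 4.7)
The lower bound $u^\ep \geq u - C\ep$ is already provided by Theorem \ref{thm:nd}(i), so the heart of the argument is the upper bound $u^\ep \leq u + C\ep$. My plan is to construct, for every $p\in \R^3$ with $|p|\leq \|Dg\|_{L^\infty(\R^3)}$ and every $q_0\in\partial\overline H(p)$, an approximate characteristic of the kind described in Remark \ref{rem:improve}, with the rate exponent $\alpha=1$ and constant $C_p$ uniformly bounded in $p$ and $q_0$. From the formula $\overline H(p) = \max\{|p_1|,|p_2|,|p_3|\}/\delta$ one reads off that $\partial\overline H(p)\subset \{q\in\R^3:|q_1|+|q_2|+|q_3|\leq 1/\delta\}$, that $\overline L(q_0)=0$ on this dual ball, and that $p\cdot q_0 = \overline H(p)$ for $q_0\in \partial\overline H(p)$. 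Each extreme point $\pm e_i/\delta$ of the dual ball is the asymptotic velocity of a trajectory running at unit cost-free speed along $l_i$, which turns the translates $l_i+\Z^3$ into ``free highways'' for the corresponding Lagrangian.

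More concretely, since $L(y,q)=0$ whenever $|q|\leq 1/a(y)$, the speed $1/\delta$ is free along $\mathcal L$ and any speed bounded by $1/(1+\delta)$ is free everywhere. For $q_0 = \sum_{i=1}^{3} a_i e_i/\delta\in\partial\overline H(p)$, so that $\sum_i|a_i|\leq 1$, I would piece together $\xi_\ep:[-t/\ep,0]\to\R^3$ out of at most three ``highway segments'' along the translates $l_i+\Z^3$, used for a time $|a_i|\,(t/\ep)$ in the direction $\mathrm{sign}(a_i)\,e_i$, together with a ``rest segment'' of duration $(1-\sum_i|a_i|)(t/\ep)$ on which $\xi_\ep$ stays put, and at most a bounded number of connecting arcs of total $\T^3$-length $O(1)$ that join the endpoints of these segments. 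All pieces can be traversed at zero Lagrangian cost, so $\int_{-t/\ep}^{0} L(\xi_\ep,\dot\xi_\ep)\,ds = 0$, and by construction $\xi_\ep(0)-\xi_\ep(-t/\ep) = (t/\ep)q_0 + O(1)$. Combined with the identity $p\cdot q_0 = \overline H(p)$, this yields exactly the cost inequality and slope estimate required by Remark \ref{rem:improve}, with $\alpha = 1$ and $C_p$ depending only on $a(\cdot)$ and $\|Dg\|_{L^\infty(\R^3)}$.

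Running the argument of Lemma \ref{lem:rate-connection} with these approximate characteristics, and absorbing the $v_p$-differences via \eqref{osc-v-p}, then gives $u^\ep(x,t)\leq u(x,t) + C\ep$ at every differentiability point of $u$; continuity extends the inequality to all $(x,t)\in \R^3\times[0,\infty)$. I expect the only genuine obstacle to be the geometric feasibility of the connecting arcs: one has to check that from an arbitrary point of $l_i+\Z^3$ one can reach any prescribed point of $l_j+\Z^3$ inside $\T^3$ in bounded time at zero cost, and that one can enter or leave each highway at any interior point. This reduces to observing that $\T^3\setminus U_\delta(\mathcal L)$ is path-connected with diameter $O(1)$ and that $1/a\geq 1/(1+\delta)$ there, so any piecewise linear path of bounded length can be traversed at speed $1/(1+\delta)$ for free, while inside $U_\delta$ the inequality $1/a\geq 1/(1+\delta)$ still holds and easily accommodates short entry and exit portions. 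No delicate Mather- or Aubry-set analysis is required.
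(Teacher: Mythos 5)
Your proposal is correct and follows essentially the same route as the paper: the lower bound from Theorem \ref{thm:nd}(i), plus approximate backward characteristics built from zero-cost ``highway'' segments along $\mathcal{L}$ at speed $1/\delta$ joined by bounded connectors, fed into Lemma \ref{lem:rate-connection} via Remark \ref{rem:improve} using the explicit formula $\ol H(p)=\frac{1}{\delta}\max\{|p_1|,|p_2|,|p_3|\}$. The only difference is cosmetic: you treat all $q_0\in\partial\ol H(p)$ uniformly with up to three highway pieces plus a rest segment (which also covers $p=0$), whereas the paper splits into the differentiable case (one line $\xi_i$) and a representative nondifferentiable case $p=(1,1,0)$ with two pieces, and uses a bounded-cost speed-$2$ connector instead of your zero-cost slow one.
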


\begin{proof}  
Fix $(x,t)\in \R^3 \times (0,\infty)$.
 It suffices to show that 
 \begin{equation}\label{Hedlund-goal}
 u^{\ep}(x,t)-u(x,t)\leq C\ep.
 \end{equation}
 By approximation, we may assume that $g\in C^1$,  $u$ is differentiable at $(x,t)$. 
 Denote $p=Du(x,t)$. There are two cases to be considered.

\smallskip

\noindent {\bf Case 1.}  $\overline H$ is differentiable at $p$. Without loss of generality, we assume that 
\[
D\overline H(p)=\left({1\over \delta},0,0\right).
\]
Then, \eqref{Hedlund-goal} follows immediately from 
\[
\int_{-s}^{0}L(\xi_1, \dot \xi_1)+\overline H(p)\,ds=\int_{-s}^0\left (0+{|p_1|\over \delta}\right)\,ds=p\cdot \left(\xi_1  (0)-\xi_1\left(-s)\right)\right) \quad \text{for all $s\geq 0$},
\]
and Lemma \ref{lem:rate-connection}. Note that $\xi_1$ is a global characteristic of any corrector $v_p$ associated with $p$ (see Definition \ref{def:back} for definition of global characteristics).

\smallskip

\noindent {\bf Case 2.}  $\overline H$ is not differentiable at $p$. 
We may assume that  $p=(1,1,0)$. 
The proof for other $p$ is the same. Then
\[
\partial \overline H(p)=\left\{\frac{1}{\del}(\tau, 1-\tau, 0)\,:\,  \tau \in  [0,1]\right\}.
\]
According to Remark \ref{rem:improve}, it suffices to construct an  ``approximate backward characteristic"  $\xi_\tau: [-{t\over \ep},0]\to \R^3$ for a given $q_\tau=\frac{1}{\del}(\tau, 1-\tau, 0)\in \partial \overline H(p)$ such that 

\begin{equation}\label{Hedlund-r1}
\left|{\xi_\tau(s)-\xi_{\tau}(0)\over s}-q_\tau\right|\leq {C\over |s|} \quad \text{for $s=-\frac{t}{\ep}$},
\end{equation}
and
\begin{equation}\label{Hedlund-r2}
\int_{-{t\over \ep}}^{0}L(\xi_\tau, \dot \xi_\tau)+\overline H(p)\,ds\leq p\cdot \left(\xi_\tau  (0)-\xi_\tau\left(-{t\over \ep}\right)\right)+C.
\end{equation}
The idea is quite simple. 
We build $\xi_\tau$ by combining a part of $\xi_1$ with a part of a proper translation of $\xi_2$.
Let $k \in \Z$ be the integer part of $-\frac{\tau t}{\ep \del}$.
\[
\xi_\tau(s)=
\begin{cases}
\left(\frac{s}{\del}, 0, 0\right)  \quad &\text{for $s\in  [- {\tau t\over \ep}, 0]$,}\\
\left(s+\frac{\tau t}{\ep}\right) \left( \left(-\frac{\tau t}{\ep \del}, 0,0\right) - \left(k,0,\frac{1}{2}\right) \right) +  \left(-\frac{\tau t}{\ep \del}, 0,0\right) \quad &\text{for $s\in  [- {\tau t\over \ep}-1, - {\tau t\over \ep}]$,}\\
\left(k,\frac{1}{\del}\left(s+\frac{\tau t}{\ep} +1 \right), {1\over 2}\right) \quad &\text{for $s\in  [-\frac{t}{\ep}, - {\tau t\over \ep}-1]$.}
\end{cases}
\]
It is clear from the construction that \eqref{Hedlund-r1} holds.
Let us now verify \eqref{Hedlund-r2}. Due to \eqref{L-Hedlund} and $a=\delta$ on ${\mathcal{L}}$, we compute 
\[
\int_{-{\tau t\over \ep}}^{0}L(\xi_\tau, \dot \xi_\tau)+\overline H(p)\,ds=\frac{\tau t}{ \ep \del}= p \cdot \left(\xi_\tau  (0))-\xi_\tau\left(-{\tau t\over \ep}\right)\right),
\]
and
\[
\int_{-{t\over \ep}}^{-{\tau t\over \ep}-1}L(\xi_\tau, \dot \xi_\tau)+\overline H(p)\,ds = \frac{1}{\del} \left( \frac{(1-\tau)t}{\ep} - 1 \right)
=p \cdot \left(\xi_\tau\left(-{\tau t\over \ep}-1\right)-\xi_\tau\left(-{t\over \ep}\right)\right).
\]
Then \eqref{Hedlund-r2} follows from the fact that $\left|\xi_\tau\left(-{\tau t\over \ep}-1\right)-\xi_\tau\left(-{\tau t\over \ep}\right)\right|\leq 2$ and $\left|\dot \xi_{\tau}(s)\right|\leq 2$ for $s\in  [- {\tau t\over \ep}-1, - {\tau t\over \ep}]$.

\begin{figure}[h]
\begin{center}

\begin{tikzpicture}[x={(-0.2cm,-0.4cm)}, y={(1cm,0cm)}, z={(0cm,1cm)}]

\draw[red] (3,0,0) ellipse (0.1cm and 0.08cm);
\draw[red, densely dashed] (-2,0,0) ellipse (0.1cm and 0.08cm);
\draw[red, densely dashed] (1,0,0) ellipse (0.1cm and 0.08cm);
\draw[red] (-2,0.1,0)--(3,0.1,0);
\draw[red] (-2,-0.1,0)--(3,-0.1,0);

\draw[cyan, densely dashed ] (0,-1,1) ellipse (0.08cm and 0.1cm);
\draw[cyan, densely dashed ] (0,0,1) ellipse (0.08cm and 0.1cm);
\draw[cyan,  densely dashed] (0,2.5,1) ellipse (0.08cm and 0.1cm);
\draw[cyan] (0,-1,0.9)--(0,2.5,0.9);
\draw[cyan] (0,-1,1.1)--(0,2.5,1.1);

\draw[thick] (5,0,0)--(0.5,0,0)--(0,0,1)--(0,-2.5,1) node[above]{$\xi_\tau$};
\draw[->] (0,-2.5,1)--(0,-1.5,1);
\draw[->] (0,0,1)--(0.25,0,0.5);
\draw[->] (0.5,0,0)--(4,0,0);
\draw (5,0,0) node[below]{$0$};

\end{tikzpicture}
\caption{Approximate backward characteristic $\xi_\tau$} \label{fig:xi-tau}
\end{center}
\end{figure}
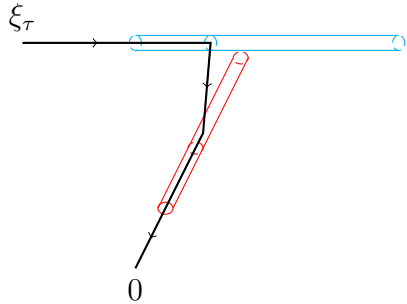

\end{proof}

\begin{rem}\label{rem:ndgeneral}  
In general,  unlike the two dimensional case (see Appendix), very little is known about the structure of Aubry and Mather sets when $n\geq 3$  due to the lack of topological restrictions. 
Identifying the shape of $\overline H$ is extremely challenging. 
Again, for $n\geq 3$, consider the basic form of a  coercive, convex and positively homogeneous Hamiltonian of degree $1$ as following
\[
H(y,p)=\frac{1}{a(y)} |p| \quad \text{ for all } (y,p) \in \R^n \times \R^n.
\]
Here,  $a(\cdot)$ is  a positive, smooth and $\Z^n$-periodic function.  
It is clear that  the corresponding effective Hamiltonian $\overline H(p)$ is also coercive, convex and positively  homogeneous of degree $1$. 
However, it is very hard  to derive any further information about $\overline H$.  
For instance, it is not even known whether $\overline H$ could be isotropic, that is, $\ol{H}(p)=|p|$ for all $p\in \R^n$, 
if $a(\cdot)$ is not constant (the answer is  negative in two dimensions \cite{Ba2}). 

By the Hopf-Lax formula, for $(x,t)\in \R^n \times (0,\infty)$,
\[
u(x,t)=\min_{y\in D_0}g(x-ty),
\]
where $D_0=\partial \overline H(0)$. 
It seems necessary to add certain technical assumptions about geometric properties of $D_0$ or the underlying Hamiltonian systems in order to improve  (ii) in Theorem \ref{thm:nd} when $n\geq 3$ 
(e.g., obtaining $O(\ep)$ for a.e. $(x,t) \in \R^n \times (0,\infty)$). 
The key question is what kind of assumptions could be considered as ``reasonable" and ``generic" even if they are in general hard to verify. 
We plan to investigate  this in the future. 

\end{rem}


\section{Proof of Theorem \ref{thm:2d}} \label{sec:2d}
In this section, the assumptions of Theorem \ref{thm:2d} are always in force.
By approximation, we may assume that, for each $y \in  \R^2$, $p \mapsto H(y,p)$ is smooth away from the origin, and for each $y\in \T^2$, the level curve
\[
\{p\in \R^2\,:\,  H(y,p)=1\}
\]
is strictly convex, i.e., the curvature is positive.  
It is worth mentioning that constant $C$ in Theorem \ref{thm:2d} does not depend on the smoothness of $H$. 
Denote
\[
S_1=\{p \in \R^2\,:\, \overline H(p)=1\}.
\]
It is clear that $\ol{H}$ is positively homogeneous of degree $k$.
By \cite{Car},  $S_1$ is $C^1$ and therefore $\overline H$ is $C^1$ away from the origin  (see  Theorem \ref{thm:2D-level} in Appendix for a sketch of the  proof of this interesting fact). For $p\in S_1$,  let $v$ be a viscosity solution to (E)$_p$, that is, $v$ solves
\[
H(y,p+Dv)=\overline H(p)=1 \quad \text{ in } \T^2.
\]
Choose a smooth (Tonelli) Hamiltonian $\tilde H:\T^2\times \R^2\to \R$ such that $\tilde H$ satisfies (H1)--(H3), $\tilde H(y,\cdot):\R^2\to \R$ is strictly convex for all $y\in \T^2$, and
\[
\tilde H=e^{H-1}  \quad \text{when $H\geq {1\over 2}$}.
\]
Clearly,  $\ol {\tilde H}(p)=e^{\ol H(p)-1}=1$ and $v$ is also a viscosity solution to 
\[
\tilde H(y,p+Dv)=1 \quad \text{ in } \T^2.
\]
Choose $\xi:\R\to \R^2$ to be an orbit from the projected Mather set $\mathcal {M}_p$ associated with $\tilde H$. 
Here we lift $\xi$ from $\T^2$ to $\R^2$. Then $v$ is $C^1$ along $\xi$ and  for $t\in \R$, 
\[
\tilde H(\xi(t), p+Dv(\xi (t)))=H(\xi(t), p+Dv(\xi (t)))=1
\]
and
\begin{equation}\label{global-chara}
\dot \xi(t)=D_p\tilde H(\xi(t), p+Dv(\xi (t)))=D_p H(\xi(t), p+Dv(\xi (t))).
\end{equation}
Without loss of generality, we may assume that  $\xi(0)\in  [0,1]^2$.
\begin{lem}\label{lem:2d-rotation-rate}There exists a constant $C>0$ independent of $p\in S_1$ and $t \in \R$ such that
\begin{equation}\label{Hulend}
|\xi(t)-\xi (0)-tD \overline H(p)|\leq C  \quad \text{ for all $t\in \R$}.
\end{equation}
\end{lem}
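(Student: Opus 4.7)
The plan is to exploit the special two-dimensional structure of Mather sets for the smooth Tonelli Hamiltonian $\tilde H$. As recorded in the Appendix, in 2D the projected Mather set $\mathcal{M}_p \subset \T^2$ either consists of finitely many periodic orbits (when $D\ol{H}(p)$ points in a rational direction) or is contained in an invariant Lipschitz closed curve $\gamma \subset \T^2$ of rational homology class (the irrational case). In either situation, the common rotation vector of every orbit in $\mathcal{M}_p$ equals $D\ol{H}(p)$, which is well-defined thanks to the $C^1$ regularity of $\ol{H}$ on $\R^2 \setminus \{0\}$ established earlier via \cite{Car}.

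First I would treat the rational case. Here the lifted orbit satisfies $\xi(t+T) = \xi(t) + v$ for some period $T > 0$ and lattice vector $v \in \Z^2$, with $v/T = D\ol{H}(p)$. The map $t \mapsto \xi(t) - \xi(0) - tD\ol{H}(p)$ is then $T$-periodic, and its oscillation is controlled by the transverse extent of one period, which in turn is bounded by the Lipschitz constant of the invariant graph carrying the orbit. In the irrational case, lifting $\gamma$ to $\R^2$ produces a $\Z^2$-equivariant Lipschitz curve $\tilde\gamma$ confined to a bounded strip around the line through $\xi(0)$ with direction $D\ol{H}(p)$; the dynamics on $\gamma$ is conjugate to a circle rotation with rotation number determined by $D\ol{H}(p)$, so the orbit traces $\tilde\gamma$ with uniformly bounded speed on the energy set $\{\tilde H = 1\}$. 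In both cases the desired $O(1)$ deviation from the linear motion follows at once from the geometry of the invariant graph plus the fact that $|\dot\xi|$ is bounded on the energy surface.

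The main obstacle is upgrading the bound to be uniform over $p \in S_1$. I would invoke compactness of $S_1$, continuity of $p \mapsto D\ol{H}(p)$ on $S_1$ (from $C^1$ regularity), a uniform upper bound on the orbit speed on $\{\tilde H = 1\}$, and, crucially, a uniform Mather-type bound on the Lipschitz constants of the invariant graphs as $p$ ranges over the compact set $S_1$. The delicate point is that the structure of $\mathcal{M}_p$ can jump between periodic and quasi-periodic behavior as $p$ varies, but upper semicontinuity of the Mather set in the $p$-parameter together with the uniform graph Lipschitz bound in 2D should suffice to extract a single $C$. This argument relies essentially on the 2D Aubry--Mather graph theorem and has no counterpart in higher dimensions, as the Hedlund example of the previous section illustrates: it is precisely the circle-homeomorphism structure of orbits in 2D Mather sets that gives the sharp $O(1)$ deviation needed here.
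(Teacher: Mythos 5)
There is a genuine gap. The Aubry--Mather/circle-homeomorphism structure you invoke only controls the component of $\xi(t)-\xi(0)$ \emph{transverse} to the direction $D\ol H(p)$: it says the lifted orbit stays in a strip of uniformly bounded width around the line with slope $s_p$ (this is exactly Lemma \ref{slope-rate}, giving $|y(t)-s_p x(t)|\leq C$). It does not control the \emph{longitudinal} drift, i.e.\ how far along that direction the orbit has traveled by time $t$. Bounded speed plus confinement to the strip is compatible with $\xi(t)$ lagging behind or running ahead of $tD\ol H(p)$ by an amount that grows in $t$ (say like $\sqrt t$), since the rotation vector only forces the \emph{average} slope $\xi(t)/t$ to converge, with no rate. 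Your rational case has the same flaw in miniature: the period $T$ of the lifted orbit over a lattice vector is not uniformly bounded over $p\in S_1$, so "the oscillation over one period" is not controlled by the transverse extent; and in the irrational case the (semi-)conjugacy to a rotation orders the crossings of the vertical curves $\eta_k$ but says nothing about the time parametrization between crossings. Nowhere does your argument use the homogeneity hypothesis, and indeed without it the statement is precisely the open issue raised in Remark \ref{rem:mechanical-Ham} for mechanical Hamiltonians.

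The paper closes exactly this gap with homogeneity: since $v$ is $C^1$ along $\xi$ and $\dot\xi=D_pH(\xi,p+Dv(\xi))$, Euler's relation $(p+Dv)\cdot D_pH(\cdot,p+Dv)=kH(\cdot,p+Dv)=k$ on the level set gives the exact identity
\begin{equation*}
p\cdot\xi(t)-p\cdot\xi(0)+v(\xi(t))-v(\xi(0))=kt=t\,p\cdot D\ol H(p),
\end{equation*}
hence $\bigl|p\cdot\bigl(\tfrac{\xi(t)}{t}-D\ol H(p)\bigr)\bigr|\leq C/|t|$, which is control in the non-tangential direction $p$ (note $p\cdot D\ol H(p)=k\neq0$). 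Combining this with the strip bound $|y(t)-s_px(t)|\leq C$ pins down $x(t)/t$ up to $O(1/|t|)$ and yields \eqref{Hulend}; uniformity in $p\in S_1$ then comes from explicit bounds ($a_r$, $b_r$, $d$, $\max_{S_1}|D\ol H|$) rather than a compactness/uniform-Lipschitz-graph argument. So your first ingredient matches the paper's, but the second, indispensable ingredient is missing, and the proof cannot be completed along the lines you describe without it.
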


\begin{proof}  It suffices to show that 
\[
|\xi(t)-tD \overline H(p)|\leq C  \quad \text{ for all $|t|\geq 1$}.
\]
Write $D\overline H(p)=(a,b)$ and $\xi(t)=(x(t),y(t))$. 
Since $p\cdot D\overline H(p)=k$,  $|D\ol H(p)|\geq {k\over |p|}$. 
Without loss of generality, we may assume that 
\[
a>0  \quad \mathrm{and} \quad {b\over a}\in [0,1].
\]
Due to the identification of $\xi$ with a circle homeomorphism (see Lemma \ref{slope-rate} in Appendix) from the Aubry-Mather theory,  we have that
\[
\left|y(t)-{b\over a}x(t)\right|\leq C.
\]
We would like to remind readers that the above inequality is derived mainly based on topological arguments. In particular, the constant $C$ does not rely on the smoothness of $H$ (see Remark \ref{rem:constant-issue}).

 Accordingly,  for $|t|\geq 1$, 
\begin{equation}\label{rotation-number}
\left|{\xi (t)\over t}-{x(t)\over t}\left(1, {b\over a}\right)\right|\leq {C\over {|t|}}.
\end{equation}
Owing to  \eqref{global-chara},
\begin{align*}
&p\cdot \xi (t)-p\cdot \xi (0)+v(\xi(t))-v(\xi (0)) = \int_0^t (p+Dv(\xi(s)))\cdot \dot \xi(s)\,ds\\
=\, & \int_{0}^{t}(p+Dv(\xi(s)))\cdot D_pH(\xi(s),p+Dv(\xi(s)))\,ds\\
=\, & kt=t p\cdot D\overline H(p).
\end{align*}
Therefore, for $|t|\geq1$,
\begin{equation}\label{non-tangential-control}
\left|p\cdot \left({\xi (t)\over t}-D\overline H(p)\right)\right|\leq {C\over {|t|}}.
\end{equation}
Combining this with \eqref{rotation-number} and the fact that 
\[
p\cdot \left(1, {b\over a}\right)={k\over a},
\]   
we have that 
\[
\left|{x(t)\over t}-a\right|\leq {Ca\over k|t|}.
\]
Again, thanks to \eqref{rotation-number},
\begin{align*}
\left|{\xi (t)\over t}-D\overline H(p)\right|&\leq \left|{x(t)\over t}\left(1, {b\over a}\right)-D\overline H(p)\right|+{C\over |t|}\\
&=\left|{x(t)\over t}-a\right| {\sqrt {a^2+b^2}\over a}+{C\over |t|}.
\end{align*}
Therefore, 
\[
\left|{\xi (t)\over t}-D\overline H(p)\right|\leq {C\over {|t|}} \left({|D\ol H(p)|\over k}+1\right)\leq {C\over {|t|}} \left({\max_{p'\in S_1}|D\ol H(p')|\over k}+1\right).
\]
Hence  \eqref{Hulend} holds.
\end{proof}

\begin{proof}[Proof of Theorem \ref{thm:2d}]
By approximation, it suffices to prove Theorem \ref{thm:2d} for $k>1$. 
Of course, we only need to prove the upper bound, that is,
\[
u^\ep(x,t) \leq u(x,t)+C\ep \quad \text{for all $(x,t)\in  \R^n\times [0, +\infty)$}.
\]
Thanks to Corollary \ref{cor:optimal-rate}, this follows immediately from the following claim. 

\medskip

{\bf Claim:} For any $p_0\in  \R^n$, let $v_0$ be a viscosity solution to (E)$_{p_0}$. Then there exists a global characteristic $\xi_0:\R\to \R^2$ associated with $v_0$ such that 
\[
\left| \xi_0(t)-\xi_0(0)-tD \overline H(p_0)\right| \leq C  \quad \text{for all $t\in \R$}.
\]
The constant $C$ is the same as that in Lemma \ref{lem:2d-rotation-rate}.
In fact,   there are two cases to be considered.

\medskip

\noindent {\bf Case 1. $p_0=0$}. Then $D\overline H(p_0)=0$. Since $L(y,0)=0$ for all $y\in \R^2$, we can simply choose $\xi_0(s)= 0$ for $s\in  \R$.

\medskip

\noindent {\bf Case 2. $p_0\not= 0$.}  Then $\overline H(p_0)>0$. Choose  $\tau>0$ satisfying that $\tau^k={\ol H(p_0)}^{-1}$. Then
\[
\overline H(\tau p_0)=1.
\] 
Let $\xi$ be a global characteristic associated with $v=\tau v_0$ and $p=\tau p_0$ as discussed in \eqref{global-chara}.  
Since $D \overline H(\tau p)=\tau^{k-1}D\ol H(p)$, we get that $\xi_0(t)=\xi (\tau^{1-k} t)$ (equivalently, $\xi(t)=\xi_0 (\tau^{k-1} t)$)  is a global characteristic associated with  $v_0$.  In light of  Lemma \ref{lem:2d-rotation-rate}, we have that 
\[
\left| \xi_0(\tau^{k-1}t)-\xi_0 (0)-tD \overline H(\tau p_0) \right| \leq C  \quad \text{for all $t\in \R$}.
\]
By using the fact that $D \overline H(\tau p_0)=\tau^{k-1}D\ol H(p_0)$, we imply
\[
\left| \xi_0(t)-\xi_0 (0)-tD \overline H(p_0)\right| \leq C  \quad \text{for all $t\in \R$}.
\]
\end{proof}

\begin{rem}\label{rem:mechanical-Ham}
The assumption of homogeneity plays a crucial role in deriving the control along a non-tangential direction in \eqref{non-tangential-control}. 
For other important Hamiltonians, e.g.,  the mechanical one $H(y,p)={1\over 2}|p|^2+V(y)$, 
a reasonable question is whether, in two dimensions,  
upper bound \eqref{upper-bound-C2} can be improved to
\[
u^{\ep}(x,t)\leq u(x,t)+C_p\sqrt{t}\ep+C\ep \quad \text{for a.e. $(x,t)\in  \R^2\times (0, +\infty)$}.
\]
Using  special structures in two dimensions, this boils down to find  a second derivative bound of $\ol H$ along at least one non-tangential direction. More precisely, given $p\in \R^2$, if $\ol H(p)>\min \ol H$ and $D\ol H(p)$ is an irrational vector, 
does there exist a unit vector $r$ such that $r \cdot D\overline H(p)\not=0$ and the function 
\[
w(s)=\ol{H}(p+sr)
\]
satisfies that $w(s)\leq w(0)+w'(0)s+Cs^2$? We will investigate this in the future.

\end{rem}


\section{Proof of Theorem \ref{thm:1d}} \label{sec:1d}

We are now in the one dimensional setting.

\begin{proof}[Proof of Theorem \ref{thm:1d}] Since the constant $C$ does not depend on the smoothness of $H$, 
by approximation, we may  assume that $H(y,p)$ is smooth and strictly convex in $p$ variable (i.e., Tonelli Hamiltonian). 
In one dimension, $\ol H$ possesses an explicit formula. In particular, we have that $\overline H\in C^1(\R)$ (see \cite{Be} for instance).

Thanks to \eqref{lower-bound} and Corollary \ref{cor:optimal-rate},  Theorem \ref{thm:1d}  follows immediately from the following lemma. 

\begin{lem} 
For $p\in  \R$, let  $v$ be a viscosity solution to
\[
H(y,p+v')= \ol{H}(p)   \quad \text{in $\T$}.
\]
Then any backward characteristic $\xi:(-\infty, 0]\to \R$ of $v$ satisfies that
\[
\left |{\xi(t)-\xi(0)\over t}-\overline H'(p)\right|\leq {1\over |t|}  \quad \text{for all $t<0$}.
\]
\end{lem}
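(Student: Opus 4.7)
The plan is to prove the bound via a perturbation argument that exploits the rigid 1D structure: any solution $v$ of $(E)_p$ satisfies $v'(y)=P(y)-p$ a.e.\ for some $1$-periodic $P$ with $H(y,P(y))=\ol H(p)$ and $\int_0^1 P(y)\,dy=p$, and similarly any solution $v_{\tilde p}$ of $(E)_{\tilde p}$ satisfies $v_{\tilde p}'=P_{\tilde p}-\tilde p$ a.e. By the standard argument used in the proof of Lemma \ref{lem:rotation-rate}, integrating the cell equation for $v_{\tilde p}$ along $\xi$ and subtracting the identity defining the backward characteristic of $v$ gives
\[
(\tilde p-p)(\xi(0)-\xi(t))+[v_{\tilde p}(\xi(0))-v_{\tilde p}(\xi(t))]-[v(\xi(0))-v(\xi(t))]\leq [\ol H(\tilde p)-\ol H(p)]\,|t|.
\]
The key 1D simplification is that substituting $v_{\tilde p}(\xi(0))-v_{\tilde p}(\xi(t))=\int_{\xi(t)}^{\xi(0)}P_{\tilde p}(y)\,dy-\tilde p(\xi(0)-\xi(t))$ and the analogous identity for $v$, the two copies of $(\tilde p-p)(\xi(0)-\xi(t))$ cancel exactly, leaving the clean inequality
\[
\int_{\xi(t)}^{\xi(0)}[P_{\tilde p}(y)-P(y)]\,dy\leq [\ol H(\tilde p)-\ol H(p)]\,|t|. \qquad (\ast)
\]

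To recover control of $\xi(0)-\xi(t)$ I would exploit periodicity. Assume first that $\ol H(p)>\min\ol H$; then for $\tilde p$ close to $p$ I can choose $P_{\tilde p}$ on the same branch of $\{H(y,\cdot)=\ol H(\tilde p)\}$ as $P$, so that $P_{\tilde p}-P$ has constant sign and $\int_0^1(P_{\tilde p}-P)\,dy=\tilde p-p$. Writing $\xi(0)-\xi(t)=n+r$ with $n\in\Z$ and $r\in[0,1)$, periodicity gives
\[
\int_{\xi(t)}^{\xi(0)}[P_{\tilde p}-P]\,dy=n(\tilde p-p)+R,
\]
where $R$ and $\tilde p-p$ have the same sign and $|R|\leq |\tilde p-p|$, because $R$ is the integral of the sign-definite function $P_{\tilde p}-P$ over an interval of length strictly less than one period. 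Substituting into $(\ast)$, dividing by $\tilde p-p$, and sending $\tilde p\to p^+$ and then $\tilde p\to p^-$, and using that $\ol H\in C^1(\R)$ in one dimension, I would deduce $n\leq \ol H'(p)|t|$ and $n+1\geq \ol H'(p)|t|$. Combined with $r\in[0,1)$ this yields $|\xi(0)-\xi(t)-|t|\ol H'(p)|\leq 1$, which is the desired estimate since $t<0$ makes $(\xi(t)-\xi(0))/t=(\xi(0)-\xi(t))/|t|$.

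The point I expect to require the most care is the degenerate case $\ol H(p)=\min\ol H$, in which the two branches of $\{H(y,\cdot)=\ol H(p)\}$ may coalesce where $D_pH$ vanishes, so selecting $P_{\tilde p}$ continuously in $\tilde p$ on a single branch is no longer automatic. There, however, $\ol H'(p)=0$ by convexity and $C^1$-regularity of $\ol H$, and I would expect to recover $|\xi(t)-\xi(0)|\leq 1$ either by varying $\tilde p$ within the flat region of $\ol H$ (so the right-hand side of $(\ast)$ vanishes and only the remainder $R$ constrains the integer part $n$) or by approximating $H$ by strictly convex smoothings for which $\ol H$ has no flat region, and then passing to the limit using the uniformity of the constant $1$ across the approximations.
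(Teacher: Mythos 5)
Your proof takes a genuinely different and in some ways more elementary route than the paper's. The paper assumes $H$ is a Tonelli Hamiltonian (after approximation), derives the ODE $\dot\xi = H_p(\xi, p+v'(\xi))=F_1(\xi)$, inverts it to get $|t|=\int_{\xi(t)}^0 F_1(x)^{-1}\,dx$, and then applies the elementary quantitative ergodic estimate (the paper's Lemma \ref{lem:class-ineq}) to the one-periodic integrand $F_1^{-1}$; this simultaneously identifies $\ol H'(p)=\bigl(\int_0^1 F_1^{-1}\,dx\bigr)^{-1}$. Your argument instead runs the Lemma \ref{lem:rotation-rate} sub-differential comparison between $v$ and $v_{\tilde p}$ and exploits the exact 1D cancellation $(\tilde p-p)(\xi(0)-\xi(t)) + \int_{\xi(t)}^{\xi(0)}(P_{\tilde p}-\tilde p)\,dy - \int_{\xi(t)}^{\xi(0)}(P-p)\,dy = \int_{\xi(t)}^{\xi(0)}(P_{\tilde p}-P)\,dy$ to eliminate the oscillation error entirely, then recovers the integer part of $\xi(0)-\xi(t)$ from the two one-sided derivatives of $\ol H$ at $p$. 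This is a nice observation: it avoids the Euler--Lagrange ODE and hence does not even require smoothness of $H$ (whereas the paper uses smoothness and appeals to wlog by approximation). Your main case ($\ol H(p)>\min\ol H$) is correct: the sign-definiteness of $P_{\tilde p}-P$ for $\tilde p$ on the same branch is right (the larger root $q^+(y,\cdot)$ of $H(y,\cdot)$ is monotone in the level), and the deduction $n\le \ol H'(p)|t|\le n+1$, combined with $r\in[0,1)$, gives the bound $1/|t|$.

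However, both of the fixes you propose for the degenerate case $\ol H(p)=\min\ol H$ have gaps. Varying $\tilde p$ inside the flat interval makes the right side of $(\ast)$ vanish, but then $P$ and $P_{\tilde p}$ are two measurable selections from the \emph{same} critical level set $\{q:H(y,q)=\min\ol H\}=[q^-(y),q^+(y)]$, and their difference is not sign-definite (they may switch branches at different points), so you cannot conclude $|R|\le|\tilde p-p|$. The approximation idea is worse: replacing $H$ by a strictly convex smoothing does \emph{not} remove the flat part of $\ol H$ — the mechanical Hamiltonian $H=\tfrac12 p^2+V(y)$ is already smooth and strictly convex in $p$ but $\ol H$ is flat on a nondegenerate interval whenever $V$ is nonconstant — so there is no family of approximations with the property you need. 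The good news is that your framework repairs cleanly without either of these: take $\tilde p$ strictly \emph{outside} the flat interval $[p^-_0,p^+_0]$ on each side. Then $P_{\tilde p}(y)=q^\pm(y,\ol H(\tilde p))$ lies strictly outside $[q^-(y,\min\ol H),q^+(y,\min\ol H)]\ni P(y)$ for every $y$, so $P_{\tilde p}-P$ is sign-definite against \emph{any} critical selection $P$; and the ratio $[\ol H(\tilde p)-\ol H(p)]/(\tilde p-p)\to 0=\ol H'(p)$ as $\tilde p\to (p^\pm_0)^{\pm}$ by $C^1$-regularity of $\ol H$, giving $n\in\{-1,0\}$ and hence $|\xi(0)-\xi(t)|\le1$. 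The paper's own treatment of this case is instead a short topological argument (a characteristic leaving one period would force $v\in C^1(\T)$ and a nonzero rotation number, contradicting $\ol H'(p)=0$).
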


\begin{proof} Fix $p\in \R$. There are two cases to be considered.

\medskip

\noindent {\bf Case 1.} $\overline H(p)=\min \ol H$.  Then $\ol{H}'(p)=0$. 

Let $\xi$ be a backward characteristic of $v$ with $\xi(0)=0$.
Since $\xi$ cannot intersect itself,  we have either $\xi((-\infty,0])\subset [0,\infty)$ or $\xi((-\infty,0]) \subset (-\infty, 0]$. 
Without loss of generality, we assume that  $\xi((-\infty,0])\subset [0,\infty)$. 
Note that, $\xi$ satisfies
\[
\dot \xi=H_{p}(\xi, p+v'(\xi)) \quad \text{for all $t\leq 0$}.
\]
We claim that 
\begin{equation}\label{case1-bdd}
\xi((-\infty,0])\subset [0,1].
\end{equation}
Assume otherwise that \eqref{case1-bdd} does not hold.
Then $\xi(T)=1$ for some $T<0$, and we deduce also that $v\in C^1(\T)$.
By periodicity,  $\xi (mT)=m$ for all $m\in \N$. Therefore,
\[
\lim_{t\to -\infty}{\xi(t)\over t}={1\over T}\not= 0=\ol H'(p),
\]
which is a contradiction.
Thus, \eqref{case1-bdd} holds.

\medskip

\noindent
{\bf Case 2.} $\overline H(p)>\min \ol H$.  
Without loss of generality, we assume $\ol{H}'(p)>0$. 
Let $\xi$ be a backward characteristic of $v$ with $\xi(0)=0$. 
Then $\xi((-\infty,0])\subset (-\infty,0]$, $v\in C^{1,1}(\R)$ and 
\[
\dot \xi=H_{p}(\xi, p+v'(\xi))>0 \quad \text{for all $t\leq 0$}.
\]
Then, by changing of variables $x=\xi(s)$, we imply
\[
|t|=\int_t^0 \,ds = \int_t^0 \frac{\dot \xi(s)}{\dot \xi(s)}\,ds=\int_{\xi(t)}^{0}{1\over F_1(x)}\,dx,
\]
where $F_1(x)=H_p(x, p+v'(x))$ for $x\in \R$.  Accordingly, for $t<0$,
\begin{align*}
{t\over \xi (t)}&={1 \over |\xi(t)|} \int_{\xi(t)}^{0}{1\over F_1(x)}\,dx
=\left(1+{{E_t}\over \xi(t)}\right)\int_{0}^{1}{1\over F_1(x)}\,dx,
\end{align*}
where $E_t$ is an error term satisfying $|E_t|\leq 1$ thanks to Lemma \ref{lem:class-ineq} below. 
Then
\[
\left|{\xi (t)\over t}-\left(\int_{0}^{1}{1\over F_1(x)}\,dx\right)^{-1}\right|={|E_t|\over |t|}\leq {1\over |t|}.
\]
The proof is complete and we get in addition that
\[
\ol{H}'(p)= \left(\int_{0}^{1}{1\over F_1(x)}\,dx\right)^{-1}.
\]
\end{proof}

\end{proof}

\begin{lem}\label{lem:class-ineq}
Assume that $f \in C(\T, [0,\infty))$ and $L>0$ are given. Then
\[
\left| \int_0^{L} f\,dy - L \int_0^1 f\,dy \right| \leq \int_0^1 f\,dy.
\]
\end{lem}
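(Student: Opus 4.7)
The plan is to reduce the inequality to the classical statement that the integral of a nonnegative periodic function over an interval grows at a linear rate, with a bounded error governed by a single period's integral.

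First I would exploit the periodicity of $f$ by writing $L = n + r$ with $n = \lfloor L \rfloor \in \N \cup \{0\}$ and $r = L - n \in [0,1)$. Using $\Z$-periodicity and a change of variables on each unit subinterval,
\[
\int_0^L f\,dy \;=\; \sum_{k=0}^{n-1}\int_k^{k+1} f\,dy \;+\; \int_n^{n+r} f\,dy \;=\; n\int_0^1 f\,dy + \int_0^r f\,dy.
\]
Substituting this into the left-hand side and using $L = n+r$, the $n\int_0^1 f\,dy$ terms cancel and we are left with
\[
\int_0^L f\,dy - L\int_0^1 f\,dy \;=\; \int_0^r f\,dy - r\int_0^1 f\,dy.
\]

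The second step is to bound the resulting difference. Since $f \geq 0$ and $r \in [0,1)$, both quantities $\int_0^r f\,dy$ and $r\int_0^1 f\,dy$ lie in the interval $\bigl[0,\int_0^1 f\,dy\bigr]$. The absolute value of the difference of two numbers in an interval of length $\int_0^1 f\,dy$ is therefore at most $\int_0^1 f\,dy$, which is exactly the desired inequality.

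There is essentially no obstacle here; the only care needed is handling the case $L < 1$ (where $n=0$ and $r=L$, so the first step is trivial) and confirming that nonnegativity of $f$ is used crucially in the final bounding step, since without it the error could be as large as $2\int_0^1 |f|\,dy$ rather than $\int_0^1 f\,dy$. Note also that the inequality is sharp: equality can be approached by functions concentrated near $0$ with $r$ close to $1$, or near $1$ with $r$ close to $0$.
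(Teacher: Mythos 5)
Your proof is correct and follows essentially the same route as the paper: split off the integer part $[L]$, use periodicity to reduce to the fractional piece, and observe that the remaining two terms are both nonnegative and bounded by $\int_0^1 f\,dy$, so their difference is too. The only cosmetic difference is that the paper phrases the last step as bounding $|a-b|$ by $\max\{a,b\}$ for $a,b\geq 0$, which is the same observation you make via the interval $\bigl[0,\int_0^1 f\,dy\bigr]$.
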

We can view this lemma as a quantitative version of the ergodic theorem for periodic functions in one dimension.
\begin{proof}
For a given real number $s\in \R$, denote by $[s]$ its integer part. We have
\begin{align*}
&\left| \int_0^{L} f\,dy - L \int_0^1 f\,dy \right| =\left|  \int_0^{[L]} f\,dy + \int_{[L]}^{L} f\,dy  - L \int_0^1 f\,dy \right| \\
=\,& \left| ([L] - L) \int_0^1 f\,dy +\int_{[L]}^{L} f\,dy \right| \\
\leq\,& \max\left \{  (L-  [L] ) \int_0^1 f\,dy, \int_{[L]}^{L} f\,dy  \right\} \leq \int_0^1 f\,dy.
\end{align*}
\end{proof}

 \subsection{An example on optimal rate $O(\ep)$ in one dimension} \label{subsec:ex}
We now show that $O(\ep)$ is indeed the optimal rate of convergence via the following simple proposition.
\begin{prop}\label{prop:optimal}
Assume that $n=1$ and $H(y,p) = \frac{p^2}{2} + V(y)$ for some given $V \in C(\T)$ with $\max_{\T} V=0$ and $V \leq -1$ in $[-3^{-1},3^{-1}]$.
Assume further that $g \equiv 0$. 
For $\ep>0$, let $u^\ep$ be the solution to {\rm (C)$_\ep$}.
Let $u$ be the solution to {\rm (C)}.
Then, $u^\ep$ converges locally uniformly to $u \equiv 0$ on $\R \times [0,\infty)$ as $\ep \to 0$.
Furthermore, for $\ep\in (0,1)$,
\begin{equation}\label{op-ep}
u^\ep(0,1) \geq \frac{\ep}{6}.
\end{equation}
\end{prop}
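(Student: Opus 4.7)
I would prove this in two steps: identify the homogenized limit as $u\equiv 0$, and then extract the lower bound \eqref{op-ep} directly from the optimal control formula \eqref{oc}.

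\emph{Step 1: $u\equiv 0$.} Since $V\le 0$, the Lagrangian $L(y,q)=q^{2}/2-V(y)$ is nonnegative, so \eqref{oc} with $g\equiv 0$ gives $u^{\ep}\ge 0$ on $\R\times[0,\infty)$. For a matching upper bound, fix $\bar y\in[0,1]$ with $V(\bar y)=0$ and, for each $(x,t)$, test \eqref{oc} with a curve sitting at an appropriate integer lift of $\bar y$ on $[-t/\ep,-1]$ and then interpolating linearly to $x/\ep$ on $[-1,0]$: along this curve $L\equiv 0$ on the long piece (by $\Z$-periodicity of $V$ and $V(\bar y)=0$) and $L$ is bounded on the unit piece, so $u^{\ep}(x,t)\le C\ep$. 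Hence $u^{\ep}\to 0$ locally uniformly, and by uniqueness of the homogenized limit $u\equiv 0$.

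\emph{Step 2: Lower bound.} With $g\equiv 0$, specializing \eqref{oc} at $(x,t)=(0,1)$ yields
\[
u^{\ep}(0,1)\;=\;\ep\inf_{\eta(0)=0}\int_{-1/\ep}^{0}\left(\tfrac12\dot\eta(s)^{2}-V(\eta(s))\right)ds.
\]
Given an admissible $\eta$, I would split into two cases according to whether $\eta$ leaves $[-1/3,1/3]$ on $[-1/\ep,0]$. If $\eta([-1/\ep,0])\subset[-1/3,1/3]$, then $-V(\eta)\ge 1$ throughout, so the action is at least $1/\ep$, giving $u^{\ep}(0,1)\ge 1>\ep/6$. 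Otherwise, let $-\tau\in(-1/\ep,0)$ be the time closest to $0$ at which $|\eta(-\tau)|=1/3$, so that $\eta$ is confined to $[-1/3,1/3]$ on $[-\tau,0]$ with endpoint separation $|\eta(0)-\eta(-\tau)|=1/3$. Discarding the nonnegative contribution from $[-1/\ep,-\tau]$ and applying Cauchy--Schwarz to the kinetic part,
\[
\int_{-\tau}^{0}\frac{\dot\eta(s)^{2}}{2}\,ds\;\ge\;\frac{1}{18\tau},\qquad \int_{-\tau}^{0}(-V(\eta(s)))\,ds\;\ge\;\tau,
\]
so by AM--GM the remaining action is at least $\tau+1/(18\tau)\ge \sqrt{2}/3>1/6$. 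Multiplying by $\ep$ gives \eqref{op-ep}.

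The only substantive point is the elementary kinetic--potential balance in the second case, which pins down the smallest $\tau$ at which a departure from $[-1/3,1/3]$ can be energetically cheap; the rest is a direct reading of the action functional. No serious obstacle arises in this one-dimensional example.
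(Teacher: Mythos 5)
Your proof is correct and takes essentially the same approach as the paper: apply the optimal-control formula at $(0,1)$, then split into two cases depending on whether the admissible curve stays inside $[-1/3,1/3]$, using the potential term in the confined case and the Cauchy--Schwarz kinetic bound in the escape case. The only difference is in the bookkeeping of Case 2: the paper thresholds the case split at time $3^{-1}$ and uses the kinetic bound alone (giving exactly the claimed $\ep/6$), whereas you keep the exit time $\tau$ free and combine kinetic and potential contributions by AM--GM (giving the slightly better constant $\sqrt{2}/3$); both comfortably establish \eqref{op-ep}.
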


\begin{proof}
It is obvious that  $u^\ep$ converges locally uniformly to $u \equiv 0$ on $\R \times [0,\infty)$ as $\ep \to 0$.
We just need to prove \eqref{op-ep}.
Thanks to the optimal control formula,
\[
u^\ep(0,1)= \inf \left\{ \ep \int_0^{\ep^{-1}} \frac{|\dot \eta|^2}{2} - V\left(\eta\right)\,dt \,:\, \eta \in \AC([0,\ep^{-1}]), \eta(0)=0 \right\}.
\]
Pick $\eta \in \AC([0,\ep^{-1}])$ with $\eta(0)=0$. There are two cases to be considered.

Firstly, if $\eta([0,3^{-1}]) \subset [-3^{-1},3^{-1}]$, then
\[
 \ep \int_0^{\ep^{-1}} \frac{|\dot \eta|^2}{2} - V\left(\eta\right)\,dt  \geq \ep \int_0^{3^{-1}} -V(\eta)\,dt \geq \frac{\ep}{3}.
\]
If not, then without loss of generality, we may assume that there exists $t \in (0,3^{-1})$ such that $\eta(t)=3^{-1}$. Then
\[
 \ep \int_0^{\ep^{-1}} \frac{|\dot \eta|^2}{2} - V\left(\eta\right)\,dt  \geq \ep \int_0^t \frac{|\dot \eta|^2}{2}\,dt
 \geq \frac{\ep}{2t} \left(\int_0^t \dot \eta\,dt\right)^2 \geq \frac{\ep}{6}.
\]
The proof is complete.
\end{proof}


\section{Lack of continuous selection of $v(y,p)$ with respect to $p$:  why the PDE method in \cite{CDI} cannot be extended} \label{sec:non-cont}

In \cite{CDI}, the authors introduce a rigorous way to approximate the two-scale asymptotic  expansion \eqref{expansion}
by using the ergodic problems and  their discounted approximations.
More precisely, for each $p\in \R^n$, instead of using $v(y,p)$ directly, they use $v^\lam(y,p)$, solution to the following discount problem
\[
\lam v^\lam+H(y,p+Dv^\lam)=0 \quad \text{ in } \T^n,
\]
and approximate $Du(x,t)$ by ${x-y\over \ep ^{\beta}}$ in \eqref{expansion} by using the well-known doubling of variables method.
By optimizing choices of  $\lam$ and $\beta$ in their arguments,   $O(\ep^{1/3})$ is the best convergence rate obtained.

In order to improve the convergence rate by modifying this approach,  
it is necessary to have a nice selection of viscosity solutions to the cell problem (E)$_{p}$ with respect to $p$.  
This allows us to use directly $v(y,p)$ instead of $v^\lam(y,p)$ in \eqref{expansion}.
For example,  if we are able to choose for each $p\in \R^n$ a solution $v(y,p)$ of (E)$_p$ such that
\[
 p \mapsto v(y,p) \quad \text{is  Lipschitz},
\]
then the convergence rate can be improved from $O(\ep^{1/3})$ to $O(\ep^{1/2})$. 

\medskip

In this section, we present an example where there is no continuous selection of $v(y,p)$ with respect to $p$. 
Assume that $n=3$. Define
\[
\begin{cases}
L_1 = \left\{ (t,1/2,0)\,:\, 0 \leq t \leq 1 \right\},\\
L_2 = \left\{ (0, t,1/2)\,:\, 0 \leq t \leq 1 \right\},\\
L_3 = \left\{ (1/2,0,t)\,:\, 0 \leq t \leq 1 \right\}.
\end{cases}
\]

We consider the following mechanical Hamiltonian:
\begin{equation}\label{counter}
\begin{cases}
H(y,p)= \frac{1}{2}|p|^2 + V(y) \quad \text{ where } V \in C^{\infty}(\T^3),\\
\max_{\T^3} V =0 \quad \text{and} \quad \{V=0\} = L_1 \cup L_2 \cup L_3.
\end{cases}
\end{equation}
Denote by $e_1=(1,0,0), e_2=(0,1,0)$ and $e_3=(0,0,1)$.
For each $p\in \R^3$, let $\mathcal {A}_p$ be the projected Aubry set associated with (E)$_p$.
We have first the following result.

\begin{thm}\label{thm:A-e}
Assume that \eqref{counter} holds. 
There exists $\del>0$ sufficiently small such that
\[
\mathcal{A}_{\ep e_i} = L_i \quad \text{ for all } \ep \in (0,\del), \, i \in \{1,2,3\}.
\]
\end{thm}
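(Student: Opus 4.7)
By the $\Z^3$-symmetry among the three coordinate axes it suffices to treat $i=1$ and set $p=\ep e_1$. My first step is to identify $\ol H(p)$ exactly. Using the convex-duality formula $\ol H(p)=\sup_{\mu}\int(p\cdot v-L(y,v))\,d\mu$ over holonomic probability measures on $\T^3\times\R^3$, together with $L(y,v)=\tfrac12|v|^2-V(y)$, pointwise separation of the integrand gives the upper bound
\[
\sup_{v\in\R^3}\bigl(\ep e_1\cdot v-\tfrac12|v|^2\bigr)+\sup_{y\in\T^3}V(y)=\tfrac{\ep^2}{2},
\]
while the invariant measure supported on the closed orbit $\gamma_1(t)=(t\ep,\tfrac12,0)$ along $L_1$ -- with constant velocity $\ep e_1$ inside the zero set of $V$ -- attains this value. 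Hence $\ol H(\ep e_1)=\ep^2/2$, and the same orbit already proves the easy inclusion $L_1\subset \mathcal{M}_{\ep e_1}\subset \mathcal{A}_{\ep e_1}$.

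Substituting this value gives the completing-the-square identity
\[
L(y,v)-\ep e_1\cdot v+\ol H(\ep e_1)=\tfrac12|v-\ep e_1|^2-V(y)\ge 0,
\]
with equality iff $v=\ep e_1$ and $y\in L_1\cup L_2\cup L_3$. This will be the engine of the whole argument, turning the Peierls-barrier integrand into a manifest sum of two nonnegative quantities. It reconfirms $L_1\subset \mathcal{A}_{\ep e_1}$ explicitly: for $y_0\in L_1$ the loop $\gamma(t)=y_0+t\ep e_1$ of period $1/\ep$ makes the integrand vanish identically, so the Peierls barrier $h_{\ep e_1}(y_0,y_0)=0$.

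For the reverse inclusion $\mathcal{A}_{\ep e_1}\subset L_1$, I fix $y\notin L_1$ and a competitor closed loop $\gamma:[0,T]\to\R^3$ with $\gamma(T)-\gamma(0)=(n_1,n_2,n_3)\in\Z^3$, and bound the barrier from below by $\int_0^T\bigl[\tfrac12|\dot\gamma-\ep e_1|^2-V(\gamma)\bigr]\,dt$. Cauchy--Schwarz yields
\[
\int_0^T\tfrac12|\dot\gamma-\ep e_1|^2\,dt\ge \frac{(n_1-T\ep)^2+n_2^2+n_3^2}{2T},
\]
which already contributes at least $1/(2T)$ when $(n_2,n_3)\ne(0,0)$; combined with energy conservation $\tfrac12|\dot\gamma|^2+V(\gamma)=\ol H(\ep e_1)$ along minimizers, which bounds $|\dot\gamma|$ and hence forces $T$ to be comparable to the arclength of $\gamma$, this gives a strictly positive floor independent of $T$. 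The delicate case is $(n_2,n_3)=(0,0)$: the loop translates only in direction $e_1$, yet $\gamma$ cannot remain in $\{V=0\}=L_1\cup L_2\cup L_3$, since $L_2$ and $L_3$ are transverse to $e_1$ and $L_1$ avoids $y$. Promoting this topological obstruction into a quantitative lower bound on $\int_0^T(-V(\gamma))\,dt$ is the main obstacle. My plan is to exploit the $L^2$-smallness $\int|\dot\gamma-\ep e_1|^2\,dt\ll 1$ to compare $\gamma$ pointwise with the straight line $y+t\ep e_1$, which exits $L_j$ ($j\ne 1$) transversally at rate $O(\ep)$ and enters $\{V\le-\alpha\}$ for some $\alpha=\alpha(\dist(y,L_1))>0$; the smallness requirement $\ep<\del$ enters precisely so that the energy budget $\ep^2/2$ cannot pay for a detour that brings $\gamma$ back onto the line network. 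This forces $h_{\ep e_1}(y,y)>0$, yielding $y\notin \mathcal{A}_{\ep e_1}$ and completing the proof.
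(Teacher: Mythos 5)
Your approach is genuinely different from the paper's, and the first half is sound, but the second half has a real gap.

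Your identification of $\ol{H}(\ep e_1)=\ep^2/2$ via the dual (closed-measure) formula and the completing-the-square identity
\[
L(y,v)-\ep e_1\cdot v+\ol H(\ep e_1)=\tfrac12|v-\ep e_1|^2-V(y)\ge 0
\]
is correct, clean, and even a little stronger than what the paper records (your upper bound $\sup_v(p\cdot v-\tfrac12|v|^2)+\sup_y V=\tfrac12|p|^2$ plus the orbit along $L_1$ gives $\ol H(\ep e_1)=\ep^2/2$ for \emph{all} $\ep>0$, not only small $\ep$). The paper instead derives the formula for $\ol H$ on a whole neighborhood of the origin via the $\inf$-$\max$ representation plus Rolle, and, crucially, by constructing an explicit $C^1$ subsolution $\phi^\ep=\ep\eta$ that is \emph{strict} on $\T^3\setminus L_i$; once such a subsolution exists, the inclusion $\cA_{\ep e_i}\subset L_i$ is automatic from standard weak KAM theory. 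This is the step where your argument breaks down.

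The gap is in the reverse inclusion $\cA_{\ep e_1}\subset L_1$ via the Peierls barrier. In the case $(n_2,n_3)\ne(0,0)$, the Cauchy--Schwarz contribution $1/(2T)$ degenerates as $T\to\infty$, and energy conservation along minimizers only gives an \emph{upper} bound on $|\dot\gamma|$, hence a \emph{lower} bound on $T$ — it does not cap $T$, so the claimed ``strictly positive floor independent of $T$'' is not established. In the case $(n_2,n_3)=(0,0)$, you explicitly leave the key step as a ``plan''; the proposed pointwise comparison of $\gamma$ with the straight line $y+t\ep e_1$ using $L^2$-smallness of $\dot\gamma-\ep e_1$ does not control position over long times, since $\sup_t|\gamma(t)-y-t\ep e_1|$ can grow like $\sqrt{T}\,\|\dot\gamma-\ep e_1\|_{L^2}$. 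In short, you have not ruled out long loops that spend almost all of their time arbitrarily close to $L_1\cup L_2\cup L_3$ while accruing only $o(1)$ action. The paper's strict-subsolution construction sidesteps exactly this difficulty: the strict inequality $\tfrac12|\ep e_i+D\phi^\ep|^2+V<\ol H(\ep e_i)$ on $\T^3\setminus L_i$ penalizes any loop leaving $L_i$ by a fixed amount per unit length regardless of $T$, which is the quantitative input your barrier estimate is missing. To salvage your route you would need, e.g., a uniform-in-$T$ lower bound on the action of any loop through $y\notin L_1$ that closes up mod $\Z^3$ — essentially rediscovering the strict subsolution or a Ma\~n\'e-potential positivity argument — so as written the proposal does not constitute a proof.
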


The above theorem follows from the following stronger lemma and the constructions of critical subsolutions in its proof.

\begin{lem} \label{lem:formula-Hbar}
There exists $\del>0$ sufficiently small such that, for $|p|<\del$,
\[
\ol{H}(p) = \max\left\{ \frac{|p_1|^2}{2}, \frac{|p_2|^2}{2}, \frac{|p_3|^2}{2} \right\},
\]
where $p=(p_1,p_2,p_3)$.
\end{lem}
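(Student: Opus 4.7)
The strategy is to establish matching bounds for $\ol H(p)$ when $|p|$ is small, with $c_p := \max\{p_1^2/2, p_2^2/2, p_3^2/2\}$. The lower bound $\ol H(p) \geq c_p$ comes from closed (Mather-type) measures supported on the three lines, and the upper bound $\ol H(p) \leq c_p$ from an explicit Lipschitz subsolution built by smoothly cutting off linear functions in tubular neighborhoods of the $L_i$'s.

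For the lower bound, for each $i\in\{1,2,3\}$ I would define a probability measure $\mu_i$ on $T\T^3 = \T^3\times\R^3$ by pushing the normalized length measure on $L_i$ forward under $y \mapsto (y, p_i e_i)$. Since $L_i$ is a closed loop tangent to $e_i$, $\mu_i$ is closed (holonomic):
\[
\int Df(y)\cdot v\,d\mu_i(y,v) \;=\; p_i\int_0^1 \partial_i f(L_i(t))\,dt \;=\; p_i\bigl(f(L_i(1))-f(L_i(0))\bigr)\;=\;0
\]
for every $f\in C^1(\T^3)$. Because $V\equiv 0$ on $L_i$, the integrand $p\cdot v - L(y,v)$ equals $p_i^2 - p_i^2/2 = p_i^2/2$ pointwise on $\supp\mu_i$. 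The standard variational formula
\[
\ol H(p) \;=\; \sup_{\mu \text{ closed}}\int\bigl(p\cdot v - L(y,v)\bigr)\,d\mu
\]
then yields $\ol H(p)\geq p_i^2/2$ for each $i$, and maximizing over $i$ gives $\ol H(p)\geq c_p$.

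For the upper bound, by convexity of $H$ it suffices to produce $w_p\in\Lip(\T^3)$ satisfying $\frac12|p+Dw_p|^2 + V(y) \leq c_p$ a.e.\ in $\T^3$. The lines $L_1, L_2, L_3$ are pairwise separated by Euclidean distance $\geq 1/2$, so I fix $\del_0>0$ small, choose pairwise disjoint tubular neighborhoods $N_i := \{d(y,L_i)<\del_0\}$, and smooth cutoffs $\eta_i\colon\T^3\to[0,1]$ equal to $1$ on $\{d(y,L_i)<\del_0/2\}$ and supported in $N_i$. Inside $N_i$, with $\{i,j,k\}=\{1,2,3\}$ and local transverse coordinates $(y_j,y_k)$, set
\[
\psi_i(y) \;:=\; -\eta_i(y)\bigl(p_j(y_j-a_j^{(i)}) + p_k(y_k-a_k^{(i)})\bigr),
\]
where $(a_j^{(i)}, a_k^{(i)})$ are the constant transverse coordinates of $L_i$. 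Put $w_p := \psi_1+\psi_2+\psi_3$. On the inner core $\{\eta_i\equiv 1\}$ one checks $p+Dw_p = p_i e_i$, giving $\frac12|p+Dw_p|^2 = p_i^2/2 \leq c_p$ (and $V\leq 0$). Outside $\bigcup_i N_i$ one has $Dw_p=0$ and $|V|\geq c_1>0$, so the inequality holds once $|p|^2\leq 2c_1$. In each transition annulus $\{\del_0/2\leq d(y,L_i)\leq \del_0\}$, $|p+Dw_p|\leq C|p|$ with $C$ independent of $p$, while the smoothness of $V$ together with $\{V=0\}=L_1\cup L_2\cup L_3$ forces $|V|\geq v_0(\del_0)>0$ there; the lemma's threshold $\del$ is then chosen so that $C^2\del^2\leq 2 v_0(\del_0)$.

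The main obstacle is the transition annulus, where the artificial gradient introduced by cutting off $\psi_i$ must be absorbed into the slack $2|V(y)|$ available off the $L_i$'s. The argument succeeds because $\del_0$ is fixed first, producing a definite floor $v_0(\del_0)>0$, and only then is $|p|$ taken small relative to $\del_0$; no coupling between $|p|$ and $\del_0$ is forced. A minor subtlety is that $V$ is assumed only smooth (not non-degenerately vanishing on the $L_i$'s), so $v_0(\del_0)$ may scale poorly with $\del_0$; this affects only how small $\del$ must be chosen, not the structure of the proof.
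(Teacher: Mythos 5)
Your proof is correct, and the upper bound is essentially the paper's argument: the paper also builds a smooth function equal to $-p_jy_j-p_ky_k$ on disjoint tubes $L_{i,1/8}$ around the lines (normalizing $|p|=1$ and scaling, rather than keeping $p$ general as you do), and absorbs the cutoff gradient using the strict negativity of $V$ off $\bigcup_i L_i$ for $|p|$ below a fixed threshold; your bookkeeping with the transition annulus is the same mechanism. The lower bound is where you genuinely diverge: the paper starts from the inf-max formula $\ol H(p)=\inf_{\phi\in C^1(\T^3)}\max_y\bigl(\tfrac12|p+D\phi|^2+V\bigr)$ and applies Rolle's theorem to $\phi$ restricted to each line $L_i$ (periodicity forces a point on $L_i$ where the tangential derivative of $\phi$ vanishes and $V=0$, giving $\ol H(p)\ge p_i^2/2$), whereas you use the dual characterization of $\ol H(p)$ as a supremum over closed (holonomic) probability measures and exhibit the explicit measures carried by $L_i$ with velocity $p_ie_i$. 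Both are standard dualities and both lower bounds hold for all $p$; the paper's route is more elementary once the inf-max formula is taken for granted and needs no measure-theoretic input, while yours is more conceptual and directly displays the minimizing measures on the $L_i$, which is in the spirit of (though not a substitute for) the paper's subsequent identification $\mathcal{A}_{\ep e_i}=L_i$ in Theorem \ref{thm:A-e} — that identification in the paper is read off from the same subsolutions being strict off $L_i$, a byproduct your version of the lemma does not provide.
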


\begin{proof}
We have the following inf-max representation formula for $\ol{H}(p)$
\begin{equation}\label{inf-max}
\ol{H}(p) = \inf_{\phi \in C^1(\T^3)} \max_{y \in \T^3} \left( \frac{1}{2} |p+D\phi(y)|^2 +V(y) \right).
\end{equation}
For any $\phi \in C^1(\T^3)$, we have $\phi(0,1/2,0)=\phi(1,1/2,0)$.
By the Rolle theorem, there exists $s \in (0,1)$ such that $\phi_{y_1}(s,1/2,0) =0$.
Plug this into \eqref{inf-max} and use the fact that $V(s,1/2,0)=0$ to get that
\[
\ol{H}(p) \geq \frac{|p_1|^2}{2}.
\]
By repeating this simple observation two more times, we get, for all $p\in \R^3$,
\begin{equation}\label{one-side}
\ol{H}(p) \geq  \max\left\{ \frac{|p_1|^2}{2}, \frac{|p_2|^2}{2}, \frac{|p_3|^2}{2} \right\}.
\end{equation}

We now need to obtain the reverse inequality for $|p|$ small.
For $1\leq i \leq 3$, set
\[
L_{i,1/8}= \left\{ x \in \R^3\,:\, \text{dist}(x,L_i) \leq \frac{1}{8} \right\}.
\]
For $p \in \R^3$ such that $|p|=1$, we construct a smooth periodic function $\eta$ satisfying
\[
\begin{cases}
\eta(y) = -p_2 y_2 - p_3 y_3 \qquad &\text{ for } y \in L_{1,1/8},\\
\eta(y) = -p_1 y_1 - p_3 y_3 \qquad &\text{ for } y \in L_{2,1/8},\\
\eta(y) = -p_1 y_1 - p_2 y_2 \qquad &\text{ for } y \in L_{3,1/8}.
\end{cases}
\]
Set $C_1=\max_{\R^3}|D\eta|$. Choose $\del$ such that
\[
\del = \frac{1}{\sqrt{1+C_{1}^{2}}} \left (- \max_{\T^3 \setminus \cup_{i=1}^3 L_{i,1/8}} V \right)^{1/2}.
\]
Then for $\ep \in (0,\del)$, we have that $\phi^\ep = \ep \eta$ is a subsolution to
\begin{equation}\label{the-other}
\frac{1}{2} |\ep p+D\phi^\ep|^2 + V =  \max\left\{ \frac{| \ep p_1|^2}{2}, \frac{| \ep p_2|^2}{2}, \frac{|\ep p_3|^2}{2} \right\} \qquad \text{ in } \T^3.
\end{equation}
We combine \eqref{one-side} and \eqref{the-other} to yield that 
\[
\ol{H}(p) = \max\left\{ \frac{|p_1|^2}{2}, \frac{|p_2|^2}{2}, \frac{|p_3|^2}{2} \right\} \qquad \text{ for all } |p| < \del.
\]

Furthermore, for  $i \in \{1,2,3\}$ fixed and $p=\ep e_i$, $\phi_\ep$ is a strict subsolution of \eqref{the-other} on $\T^3 \setminus L_i$, i.e., 
\[
\begin{cases}
\frac{1}{2} |\ep p+D\phi^\ep|^2 +V<\ol H(p)   \quad \text{in $\T^3\backslash L_i$}\\
\frac{1}{2} |\ep p+D\phi^\ep|^2 +V=\ol H(p)   \quad \text{in $L_i$}.
\end{cases}
\]
Also, $p+D\phi^\ep=p_i e_i$ along $L_i$. We hence deduce easily that $\mathcal{A}_{\ep e_i} = L_i$ for all $\ep \in (0,\del)$.
\end{proof}

Note that for this example, the Mather set and Aubry set coincide. It is well-known (see \cite{Fa}) that solutions to the cell problem  {\rm (E)$_p$}  are determined by their values on $\mathcal {A}_p$.  
Moreover, all subsolutions are  differentiable on $\mathcal {A}_p$ with the same gradients. 
 Accordingly, for $p=\ep e_i$, if $v(x,p)$ is a viscosity solution to  {\rm (E)$_{p}$}, then
\[
Dv=D\phi^\ep  \quad \text{on $L_i$},
\]
which yields that $v$ is constant along $L_i$. An immediate corollary is
\begin{cor}\label{cor:unique}
For $\ep \in (0,\del)$ and $i \in \{1,2,3\}$,
the cell problem {\rm (E)$_{\ep e_i}$} has a unique solution (up to additive constants) which is given by
\begin{equation}\label{HL-Mather}
v(y,\ep e_i)=\inf_{\substack{t>0 \\  \xi\in \Gamma_{i,t,y}}}\left(\int_{0}^{t}{1\over 2}|\dot \xi|^2-\dot \xi\cdot \ep e_i+V(\xi)+\ol H(\ep e_i)\,ds\right)+c,
\end{equation}
for some constant $c \in \R$ with $v=c$ along $L_i$. 
Here  $\Gamma_{i,t,y}$ is the collection of all absolutely continuous curves from $[0,t]$ to $\R^3$ such that $\xi(0)\in L_i+\Z^3$ and $\xi(t)=y$.
\end{cor}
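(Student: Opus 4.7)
Most of the content of the corollary is already at hand from the paragraph immediately preceding its statement. By Theorem~\ref{thm:A-e} we have $\mathcal{A}_{\ep e_i} = L_i$, and by the two weak KAM facts quoted there---viscosity solutions of (E)$_p$ are uniquely determined on $\T^n$ by their values on $\mathcal{A}_p$, and every critical subsolution is differentiable on $\mathcal{A}_p$ with a common gradient---the explicit subsolution $\phi^\ep = \ep\eta$ constructed in the proof of Lemma~\ref{lem:formula-Hbar} pins down that common gradient on $L_i$. For $p=\ep e_i$, the vanishing of the two components of $p$ transverse to $L_i$ forces $\eta \equiv 0$ on the neighborhood $L_{i,1/8}$, so $D\phi^\ep \equiv 0$ there; hence every viscosity solution $v$ of (E)$_{\ep e_i}$ satisfies $Dv = 0$ at every point of $L_i$ and therefore $v \equiv c$ on the connected arc $L_i$ (and by periodicity on each $\Z^3$-translate), for some $c\in\R$.

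The remaining task is to derive the representation formula \eqref{HL-Mather}, which is the standard Lax--Oleinik formula for weak KAM solutions anchored on the Aubry set. I would proceed via a two-sided bound. For the $\leq$ direction, pick any admissible pair $(t,\xi) \in (0,\infty) \times \Gamma_{i,t,y}$ and integrate the subsolution relation $H(\xi,\ep e_i + Dv(\xi)) \leq \ol H(\ep e_i)$ along $\xi$ (after a sup-convolution regularization of $v$, in the spirit of \cite{Fa}) to obtain
\[
v(y) - v(\xi(0)) \;\leq\; \int_0^t \bigl( L(\xi,\dot\xi) + \ol H(\ep e_i) - \ep e_i \cdot \dot\xi \bigr)\,ds,
\]
with $L(y,q) = \tfrac12|q|^2 - V(y)$. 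Since $v(\xi(0)) = c$ by the previous paragraph, taking the infimum over admissible $(t,\xi)$ yields one inequality.

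For the matching $\geq$ direction, I would invoke the existence at each $y\in\R^3$ of a backward calibrated curve $\xi:(-\infty,0]\to\R^3$ of $v$ with $\xi(0)=y$, available by weak KAM theory exactly as mentioned after Definition~\ref{def:back}. The $\alpha$-limit set of such a curve is contained in the full Aubry set $L_i + \Z^3$, so for every $\delta>0$ there exists $T>0$ with $\dist(\xi(-T),\,L_i + \Z^3) \leq \delta$. Splicing $\xi\big|_{[-T,0]}$ with a short linear bridge of length $\delta$ connecting $\xi(-T)$ to the nearest point of $L_i + \Z^3$ produces, after a trivial reparametrization, an element of $\Gamma_{i,t',y}$ whose total action equals $v(y)-c$ up to an error bounded by $C\delta$ (using Lipschitz bounds on $v$ and $L$ near $L_i$). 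Letting $\delta \downarrow 0$ closes the matching inequality, and the uniqueness statement up to an additive constant is an immediate byproduct of the representation applied to the difference $v_1-v_2$. The single nontrivial point is the $\alpha$-limit argument in the $\geq$ direction; everything else is essentially bookkeeping, which is why the authors call the statement an ``immediate corollary'' of the preceding discussion.
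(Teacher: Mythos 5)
Your proof is correct and takes essentially the same approach as the paper: the constancy of $v$ along $L_i$ via $\mathcal{A}_{\ep e_i}=L_i$ and the common gradient $Dv=D\phi^\ep=0$ of critical subsolutions on the Aubry set is verbatim the paper's argument, while your two-sided verification of \eqref{HL-Mather} (integrating the subsolution inequality along admissible curves, then using backward calibrated curves whose $\alpha$-limit sets lie in $L_i+\Z^3$ and a short splicing bridge) simply supplies the proof of the standard weak KAM representation that the paper cites from \cite{Fa}. Note only that, consistently with your $L(y,q)=\tfrac12|q|^2-V(y)$, the integrand in \eqref{HL-Mather} should carry $-V(\xi)$ (as in the proof of Theorem \ref{thm:non-cont}); the $+V(\xi)$ appearing there is a sign typo and not a defect of your argument.
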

See also \cite{Fa} for formula \eqref{HL-Mather}.
Here is the main result of this section.

\begin{thm}\label{thm:non-cont}
Assume that \eqref{counter} holds.
Then there is no way to select for each $p\in \R^3$, $v(y,p)\in \Lip(\T^3)$, a solution to {\rm(E)$_p$},  such that
\begin{equation}\label{cont-select}
p \mapsto v(y,p) \text{ is continuous at } p=0.
\end{equation}

\end{thm}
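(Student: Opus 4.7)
The plan is to argue by contradiction. Suppose a continuous selection $p\mapsto v(\cdot,p)\in\Lip(\T^3)$ at $p=0$ exists. Fix reference points $y_i\in L_i$ for $i=1,2,3$, and note that each $\alpha_i(p):=v(y_i,p)$ is continuous at $p=0$. The strategy is to compare the values of $v(\cdot,0)$ on $L_1,L_2,L_3$ extracted by letting $p\to 0$ along each of the three coordinate axes $\{\ep e_i\}_{\ep>0}$; these three sets of data will be forced to be pairwise inconsistent.

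For the first step, I extract the axial identities. By Corollary \ref{cor:unique}, for $\ep\in(0,\del)$ the solution $v(\cdot,\ep e_i)$ is constant on $L_i$ and, up to an additive constant that cancels in any difference of values, is given by the Hopf--Lax type infimum in (\ref{HL-Mather}). Hence, for $j\neq i$,
\[
\alpha_j(\ep e_i)-\alpha_i(\ep e_i)\;=\;M^{(i)}_{\ep e_i}(y_j),
\]
where
\[
M^{(i)}_p(y)\;:=\;\inf_{t>0,\ \xi\in\Gamma_{i,t,y}}\int_{0}^{t}\Bigl(\tfrac12|\dot\xi|^2-V(\xi)-p\cdot\dot\xi+\ol H(p)\Bigr)\,ds
\]
is built from the Legendre transform $L(y,q)=\tfrac12|q|^2-V(y)$ of $H$.

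For the second step, I pass to the limit $\ep\to 0^+$. Completing the square rewrites the integrand as $\tfrac12|\dot\xi-\ep e_i|^2-V(\xi)\ge 0$, and $\ol H(\ep e_i)=\ep^2/2\to 0$; standard compactness of near-minimizers of the coercive action gives
\[
\lim_{\ep\to 0^+}M^{(i)}_{\ep e_i}(y_j)\;=\;M^{(i)}_0(y_j)\;:=\;\inf_{t>0,\ \xi\in\Gamma_{i,t,y_j}}\int_0^t\Bigl(\tfrac12|\dot\xi|^2-V(\xi)\Bigr)\,ds\;\ge\;0.
\]
Continuity of the selection at $p=0$ then forces $\alpha_j(0)-\alpha_i(0)=M^{(i)}_0(y_j)$ for all $i\neq j$. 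Specializing to the pairs $(i,j)=(1,2)$ and $(i,j)=(2,1)$ and adding,
\[
0\;=\;M^{(1)}_0(y_2)+M^{(2)}_0(y_1).
\]

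The final step is to derive the contradiction by showing $M^{(1)}_0(y_2)>0$ (the argument for $M^{(2)}_0(y_1)$ is identical). The integrand $\tfrac12|\dot\xi|^2-V(\xi)$ is non-negative and vanishes only where $\dot\xi=0$ and $V(\xi)=0$ simultaneously, i.e., on constant paths lying in $L_1\cup L_2\cup L_3$; the three components of $\{V=0\}$ are pairwise disjoint in $\T^3$, so $L_1+\Z^3$ and $y_2\in L_2$ cannot be joined this way. Quantitatively, pick $\rho>0$ so that the $\rho$-tubes of $L_1+\Z^3,L_2+\Z^3,L_3+\Z^3$ are pairwise disjoint and $-V\ge\eta>0$ off their union. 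If $(\xi_n,t_n)$ were a minimizing sequence with action tending to $0$, then the time $|T_n|$ spent outside these tubes would obey $|T_n|\le\eta^{-1}\int_0^{t_n}-V(\xi_n)\,ds\to 0$; but during $T_n$ the path must still cross a gap of definite positive width from the $L_1$-tube to either the $L_2$- or $L_3$-tube, and a Cauchy--Schwarz estimate then forces $\int_0^{t_n}\tfrac12|\dot\xi_n|^2\,ds\gtrsim 1/|T_n|\to\infty$, contradicting the bound on the action. The hardest part of this plan is the $\ep\to 0^+$ passage in Step 2: one has to rule out escape of the terminal time $t_n$ of near-minimizers uniformly in $\ep$, which should follow by comparing with a fixed admissible competitor built from the strict subsolution $\phi^\ep$ of Lemma \ref{lem:formula-Hbar}.
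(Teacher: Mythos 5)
Your plan follows essentially the same route as the paper's proof: use Corollary~\ref{cor:unique} to express the axial solutions $v(\cdot,\ep e_i)$ via Ma\~n\'e-type potentials measured from $L_i$, pass to $\ep\to 0^+$ using continuity of the selection at $p=0$, and derive a contradiction from the strict positivity of the limiting potentials at points off $L_i+\Z^3$. Your final step (adding the identities for $(i,j)=(1,2)$ and $(2,1)$) is a cosmetic variant of the paper's (which takes $c_1\ge c_2$ and evaluates at $y\in L_2$), and the Cauchy--Schwarz argument for $M^{(1)}_0(y_2)>0$ is correct.

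The gap is in Step 2, the lower semicontinuity $\liminf_{\ep\to 0^+}M^{(i)}_{\ep e_i}(y_j)\ge M^{(i)}_0(y_j)$, which you correctly flag as the hard part but misdiagnose. Bounding the terminal times $t_\ep$ uniformly in $\ep$ is neither necessary nor achievable: a near-minimizer may start at $(-N,1/2,0)\in L_1+\Z^3$ for arbitrarily large $N$, drift along $L_1$ at speed $\ep$ (zero cost, since $\tfrac12|\dot\xi-\ep e_1|^2-V\equiv 0$ there), and then cross to $y_j$; its terminal time is $\gtrsim N/\ep$ no matter how small $\ep$ is, and a fixed competitor only bounds the \emph{action}, not the time. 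The real danger appears when you re-expand the square: the cross-term $-\ep\, e_1\cdot\bigl(y_j-\xi_\ep(0)\bigr)$ can be arbitrarily negative when $\xi_\ep(0)$ sits far in the $-e_1$-direction along $L_1+\Z^3$, and this term must be absorbed somewhere. The paper's proof does this by discarding the (nonnegative) portion of the action up to the \emph{last exit time} $\bar t_{\ep,\delta}$ from the $\delta$-tube of $L_1+\Z^3$ and then proving the quantitative bound $\bigl|e_1\cdot\bigl(\xi_\ep(t_\ep)-\xi_\ep(\bar t_{\ep,\delta})\bigr)\bigr|\le C_\delta$. Establishing that bound is where the work is: it rests on the structural facts, derived from the uniform action bound, that (i) total time outside the three tubes is $\le T_\delta$, (ii) the number and total displacement of inter-tube jumps is $\le N_\delta$, both independent of $\ep$, and (iii) the $e_1$-coordinate is confined modulo $1$ while the trajectory sits inside an $L_2$- or $L_3$-tube, since those lines are parallel to $e_2,e_3$. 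Your sketch invokes ``standard compactness'' and a terminal-time bound, neither of which applies here; you would need to replace it with the tube/jump bookkeeping above to close the argument.
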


\begin{proof}
Assume by contradiction that there is a way to select $v(y,p) \in \Lip(\T^3)$ such that \eqref{cont-select} holds.
Then, 
\begin{equation}\label{non-cont1}
\lim_{\ep \to 0} v(y,\ep e_1) = \lim_{\ep \to 0} v(y,\ep e_2) = \lim_{\ep \to 0} v(y, \ep e_3).
\end{equation}
Assume that for $i \in \{1,2,3\}$
\[
v(y, \ep e_i)=c_{i\ep}   \quad \text{along $L_i$}.
\]
for some constant $c_{i\ep}$. We claim that for $i \in \{1,2,3\}$,
\begin{equation}\label{non-cont2}
\lim_{\ep \to 0} v(y,\ep e_i) = d(y,L_i) + c_i,
\end{equation}
where $c_i=\lim_{\ep\to 0}c_{i\ep}$, and 
\[
d(y,L_i) =\inf_{\substack{t>0\\  \xi\in \Gamma_{i,t,y}}}\left(\int_{0}^{t}{1\over 2}|\dot \xi|^2-V(\xi)\,ds\right).
\]
It suffices to prove this for $i=1$ as proofs for $i=2,3$ are similar. It is easy to see that 
\[
\limsup_{\ep\to 0}v(y, \ep e_1)\leq d(y,L_1) + c_1.
\]
The other direction is more subtle since $t$ in formula \eqref{HL-Mather} (and also $|\xi(0)|$) could go to $+\infty$.  
We need to make use of the special structure of $V$ and the formula of $\overline H$ here.  Note that
\[
v(y,\ep  e_1)=\inf_{\substack{t>0 \\  \xi\in \Gamma_{i,t,y}}}\left(\int_{0}^{t}{1\over 2}|\dot \xi|^2-\dot \xi\cdot \ep e_1-V(\xi)+\ol H(\ep e_1)\,ds\right)+c_{1\ep}\geq c_{1\ep}.
\]
Therefore, if $y\in L_1+\mathbb{Z}^3$, then
\[
d(y, L_1)+c_1=c_1\leq \liminf_{\ep\to 0}v(y, \ep e_1).
\]
We hence only need to consider the case $y\notin L_1+\mathbb{Z}^3$.   
Fix $0< \delta< \frac{1}{8}{\rm dist}(y, L_1+\mathbb{Z}^3)$. For $\ep>0$, choose  $\xi_{\ep}\in \Gamma_{i,t_{\ep},y}$ for some  $t_{\ep}>0$ such that 
\[
v(y, \ep e_1)\geq \int_{0}^{t_\ep}\left({1\over 2}|\dot \xi_{\ep}|^2-\dot \xi_{\ep}\cdot \ep e_1-V(\xi_{\ep})+\ol H(\ep e_1)\right)\,ds+c_{1\ep}-\ep.
\]
Note that for $t_1, t_2\in [0,t_{\ep}]$ with $t_1<t_2$,
\begin{align}\label{main-control}
&\int_{t_1}^{t_2}\left({1\over 2}|\dot \xi_{\ep}|^2-\dot \xi_{\ep}\cdot \ep e_1 -V(\xi_{\ep})+\ol H(\ep e_1)\right)\,ds\\
\geq  \ & \frac{t_2-t_1}{2}\left |{\xi_{\ep}(t_2)-\xi_{\ep}(t_1)\over t_2-t_1}-\ep e_1 \right |^2-\int_{t_1}^{t_2}V(\xi_{\ep})\,ds \notag
\end{align}
For $1 \leq i \leq 3$, denote by $L_{i,\del}$   the cylinder around $L_i$ with radius $\del$, that is,
\[
L_{i,\del}=\left\{x\in \mathbb{R}^3 \,:\, {\rm dist}(x, L_i)\leq \del \right\}.
\]
Since $v(y,\ep e_1)$ is uniformly bounded for all $\ep\in [0,1]$, due to \eqref{main-control},  for our fixed $\delta$, we have that 

\begin{itemize}
\item the total amount of time that $\xi_\ep$ stays outside of the tubes $U=\bigcup_{i=1}^3 (L_{i,\del}+\mathbb{Z}^3)$ is bounded by a constant $T_{\delta}>0$, which is independent of $\ep$;

\item  the number of times that $\xi_{\ep}$  jumps between different tubes in $U$, and the total jumping distance are bounded by a constant $N_{\delta}>0$, which is independent of $\ep$.   
Here, a jump of $\xi_\ep$ refers to an interval $[t_1,t_2] \subset [0,t_\ep]$ such that $\xi_\ep(t_1)$ and $\xi_\ep(t_2)$ belong to two different tubes in $U$, and $ \xi_{\ep}((t_1,t_2))\cap U = \emptyset$.

\end{itemize}

\noindent This situation is in fact quite similar to the Hedlund example. 
Now, denote by $\bar t_{\ep,\delta}$  the exit time of $\xi_{\ep}$ from $L_{1,\del}+\mathbb{Z}^3$, that is,
\[
\xi_{\ep}(\bar t_{\ep,\delta})\in (L_{1,\del}+\mathbb{Z}^3)  \quad \mathrm{and} \quad \xi_{\ep}((\bar t_{\ep,\delta},t_\ep])\cap (L_{1,\del}+\mathbb{Z}^3)=\emptyset.
\]
Thanks to the observations above, we have that
\[
|e_1\cdot (\xi_{\ep}(\bar t_{\ep,\delta})-y)|\leq C_{\delta},
\]
for a constant $C_{\delta}$ independent of $\ep$. Then,
\begin{align*}
v(y, \ep e_1)&\geq  \int_{\bar t_{\ep,\delta}}^{t_{\ep}}\left({1\over 2}|\dot \xi_{\ep}|^2-\dot \xi_{\ep}\cdot \ep e_1-V(\xi_{\ep})+\ol H(\ep e_1)\right)\,ds+c_{1\ep}-\ep\\
&\geq d(y,L_1)-C\delta- C_{\delta}\ep+c_{1\ep}-\ep.
\end{align*}
By first sending $\ep \to 0$, and then $\delta\to 0$, we have that
\[
\liminf_{\ep\to 0}v(y, \ep e_1)\geq d(y,L_1)+c_1.
\]
Therefore, our claim (\ref{non-cont2}) holds. 

It is clear that $d(y,L_i)=0$ for $y\in L_i+\Z^3$, and $d(y,L_i)>0$ for $y\in \T^3 \setminus (L_i+\Z^3)$. 
Combining \eqref{non-cont1} and \eqref{non-cont2} to get
\[
d(y,L_1) + c_1 = d(y,L_2) + c_2 = d(y,L_3) + c_3 \quad \text{ for all } y \in \T^3.
\]
Without loss of generality, we assume that $c_1\geq c_2$. 
Then, for $y\in L_2$,
\[
c_2=d(y,L_2)+c_2=d(y,L_1) + c_1 > c_1,
\]
which is a contradiction.
\end{proof}

\begin{rem}
Note that in this example we pointed out here, $\{V=0\}=L_1 \cup L_2 \cup L_3$ is not compactly supported in $\T^3$ and is of a non-trapping situation,
which is why we have the representation formula of $\ol{H}$ in Lemma \ref{lem:formula-Hbar}.

Because of a technical reason that we need $L_{1,1/8}, L_{2,1/8}, L_{3, 1/8}$ to be disjoint, we need to work in three dimensional space.
In fact, any dimension greater than or equal to three will work. We do not know yet how to  construct this kind of example in two dimensions. 

\end{rem}

\begin{rem} By refining our arguments,  we can actually show that there does not exist continuous selection of supersolutions to (E)$_p$ either. 
Indeed, assume otherwise that there is a way to select $w(y,p) \in \Lip(\T^3)$, a supersolution to (E)$_p$, such that $p \mapsto w(y,p)$ is continuous at $p=0$. Denote by
\[
c_{1 \ep} = \min_{y \in L_1} w(y,\ep e_1) = w(y_\ep, \ep e_1) \quad \text{ for some } y_\ep \in L_1.
\]
By passing to a subsequence if necessary, we assume further that $\lim_{\ep \to 0} c_{1 \ep } = c_1$, and $\lim_{\ep \to 0} y_{\ep} = y_0 \in L_1$.
By the comparison principle on Aubry set $L_1$, we have $w(y,\ep e_1) \geq v(y,\ep e_1)$, where $v(y, \ep e_1)$ is given in formula  \eqref{HL-Mather} with $c=c_{1\ep}$.
Let $\ep \to 0$ to yield
\[
w(y,0) \geq d(y,L_1) + c_1 \quad \text{ and } \quad w(y_0,0) = c_1.
\]
In particular, 
\[
\min_{y \in \T^3} w(y,0) = \min_{y \in L_1} w(y,0)= c_1 \quad \text{ and } \quad w(y,0) > c_1 \quad \text{ for } y \in L_2 \cup L_3.
\]
Repeat the above argument for $c_{2 \ep} = \min_{y \in L_2} w(y, \ep e_2)$ to yield the contradiction.

Hence, the method in \cite{CDI} cannot be employed to obtain an improved upper bound of $u^{\ep}-u$. 

\end{rem}


\section{Appendix}\label{appen}

\subsection{Circle homeomorphism}
We first present a result on circle homeomorphism.  A continuous function  $f:\R\to \R$ is called a {\it circle  homeomorphism}  if $f$ is strictly increasing and 
\[
f(x+1)=f(x)+1 \quad \text{ for all } x\in \R.
\]
Then, it is well-known that the Poincar\'e rotation number 
\[
\beta_f=\lim_{i\to \infty}{f^{i}(x)\over i}.
\]
exists and is independent of $x\in \R$. Moreover, 
\begin{equation}\label{circle-control}
|f^{i}(x)-f(x)-i\beta_f|\leq 1 \quad \text{for all $i\in \Z$}.
\end{equation}
Also,  $\beta_f={p\over q}\in \mathbb{Q}$ with $p\in \Z, q \in \N$ if and only if there exists $x_0\in \R$ such that 
\[
f^{q}(x_0)=f(x_0)+p.
\]
Here, for $i\in \N$, $f^{i}$ represents the $i$-th iteration of $f$. 
See \cite[Chapter 3, \textsection 11]{Ar} for details.

\subsection{Rotation vectors in two dimensions} 
Assume $n=2$. Let $H$ be a smooth Hamiltonian satisfying (H1)--(H3) and furthermore that $p\mapsto H(y,p)$ is strictly convex for each $y\in \T^2$.
Let  $L$ be the corresponding Lagrangian. 

A Lipschitz continuous curve $\gamma:\R\to \R^2$ is  called an {\it absolute minimizer} associated with $L+c$ for some $c\in  \R$ if 
\[
\int_{t_1}^{t_2}L(\gamma, \dot \gamma)+c\,ds\leq \int_{s_1}^{s_2}L(\del, \dot \del)+c\,ds
\]
for any $t_1<t_2$, $s_1<s_2$ and  Lipschitz continuous curve $\del$ satisfying $\del(s_i)=\gamma(t_i)$ for $i=1,2$. 
Two absolute minimizers associated with $L+c$ cannot intersect twice unless they are the same after suitable translation in time. 
This non-crossing property, together with the two dimensional topology, plays a crucial role in the Aubry-Mather theory, 
which provides detailed information about distributions of absolute minimizers (see \cite{Ba1}).
For reader's convenience, let us give a brief explanation about this important non-crossing fact here. 
Assume otherwise that two distinct (up to translation) absolute minimizers $\gam_1$ and $\gam_2$ intersect at least twice.
By translation, we may assume that there are $a, b_1, b_2 \in \R$ such that $a<b_1 \leq b_2$ and
\[
\gam_1(a) = \gam_2(a), \quad \gam_1(b_1) = \gam_2(b_2).
\]
It is clear that
\[
\int_{a}^{b_1}L(\gamma_1, \dot \gamma_1)+c\,ds =  \int_{a}^{b_2}L(\gamma_2, \dot \gamma_2)+c\,ds.
\]
Let $\gam_3:[a,b_1+1] \to \R^2$ be such that
\[
\gam_3(s)=
\begin{cases}
\gam_1(s) \quad &\text{ for } s \in [a,b_1],\\
\gam_2(s+b_2-b_1) \quad &\text{ for } s \in [b_1, b_1+1].
\end{cases}
\]
Since 
\[
\int_{a}^{b_2+1}L(\gamma_2, \dot \gamma_2)+c\,ds =  \int_{a}^{b_1+1}L(\gamma_3, \dot \gamma_3)+c\,ds,
\]
  $\gam_3|_{[a,b_1+1]}$ is also a minimizer of the action
\[
\int L(\gamma , \dot \gamma )+c\,ds 
\]
with corresponding fixed endpoints $\gamma_2(a)=\gamma_3(a)$ and $\gamma_2(b_2+1)=\gamma_3(b_1+1)$.
Hence $\gam_3$ is $C^2$ and solves the following Euler-Lagrange equations
\[
\frac{d}{ds} \left( D_q L(\gam_3(s),\dot \gam_3(s)) \right) = D_x L(\gam_3(s), \dot \gam_3(s)) \quad \text{ for all } s \in [a, b_1+1].
\]
Accordingly, at the junction $\gamma_1(b_1)=\gamma_2(b_2)=\gamma_3(b_1)$, we must have that
\[
\gamma_{1}'(b_1)=\gamma_{2}'(b_2)=\gamma_{3}'(b_1).
\]
Since $\gamma_1$ and $\gamma_2$ are also solutions to the above Euler-Lagrange equation,  the uniqueness result of second order ODEs yields that $\gamma_1(t)=\gamma_2(t+b_2-b_1)$, which is absurd.

\smallskip

The following theorem was established in \cite{Car}. 
\begin{thm}\label{thm:2D-level}
If $\ol H(p)>\min_{\R^2}\ol H$, the set of sub-differentials $\partial \ol H(p)$ is a radical segment, that is,
\[
\partial \overline{H}(p) = \left\{r n_p \,:\, r \in [r_1(p),r_2(p)]\right\}  \quad \text{for some unit vector  $n_p$ and $r_1(p), r_2(p)>0$.}
\]
In particular, this implies that for $s>\min_{\R^2} \overline{H}$, the level curve $\{\overline H=s\}$ is $C^1$.
\end{thm}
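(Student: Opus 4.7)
The plan is to combine the weak KAM identification of $\partial \ol H(p)$ with rotation vectors of Mather measures and the two dimensional non-crossing property of absolute minimizers recalled in the paragraphs preceding this theorem.

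First, by Mather's convex duality between action and rotation, $\partial \ol H(p)$ coincides with the set of rotation (homology) vectors $\rho(\mu)\in H_1(\T^2,\R)\simeq \R^2$ of Mather measures $\mu$ at cohomology class $p$. Since $\ol H$ is convex and finite, $\partial \ol H(p)$ is a nonempty, compact, convex subset of $\R^2$. The assumption $\ol H(p)>\min \ol H$ rules out $0\in \partial \ol H(p)$ immediately, since $0\in \partial \ol H(p)$ would force $p$ to minimize $\ol H$.

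The crux is to show that every element of $\partial \ol H(p)$ lies on a single ray from the origin. Lift any two orbits $\gamma_1,\gamma_2$ in the Mather set $\cM_p$ to absolute minimizers $\tilde\gamma_1,\tilde\gamma_2:\R\to\R^2$ for the action $\int L+\ol H(p)$. By the non-crossing property recalled right before this theorem, no two such lifts (nor any pair of $\Z^2$-translates of them) can meet at two distinct times without coinciding up to time translation. Fixing a rational line $\ell\subset \T^2$ transversal to the asymptotic direction of $\tilde\gamma_1$, the non-crossing forces the first-return dynamics of $\cM_p$ on $\ell$ to be order preserving, and the $\Z$-equivariance of the lift promotes it to a circle homeomorphism in the sense recalled in the appendix. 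Its Poincar\'e rotation number is an invariant of $p$, independent of the chosen orbit, and via \eqref{circle-control} this pins the direction of $\rho(\mu)$ to a single unit vector $n_p$ for every Mather measure $\mu$ at cohomology $p$. Convexity of $\partial \ol H(p)$ then yields $\partial \ol H(p)=\{rn_p:r\in[r_1(p),r_2(p)]\}$ with $r_1(p)>0$.

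For the final assertion about level curves, a convex coercive function in $\R^2$ has a $C^1$ level curve through $p$ precisely when $\partial \ol H(p)$ has empty interior, since the outward normal cone to $\{\ol H\leq s\}$ at $p$ is the conical hull of $\partial \ol H(p)$. Since we have shown $\partial \ol H(p)$ lies on a ray, this cone is one dimensional and $\{\ol H=s\}$ is $C^1$ for every $s>\min\ol H$. The main obstacle is the non-crossing-to-single-direction step: one must handle uniformly the rational case, where $\cM_p$ may contain several periodic minimizing orbits sharing a direction but possibly having distinct speeds, and the irrational case, where $\cM_p$ can be a Denjoy-type minimal set. In both regimes the transversal $\ell$ has to be chosen so that every orbit in $\cM_p$ meets it infinitely often, and the $\Z$-equivariant lift of the first-return map has to be well defined; both ultimately rest on the two dimensional topology of $\T^2$ together with the strict convexity of $H$ in $p$ assumed at the beginning of this section.
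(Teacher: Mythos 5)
Your skeleton is the same as the paper's: identify $\partial \ol H(p)$ with the rotation vectors of Mather measures at cohomology class $p$, rule out $0\in\partial\ol H(p)$ because $\ol H(p)>\min\ol H$, use the two dimensional non-crossing property of absolute minimizers to force all rotation vectors onto a single ray, and then get the radial segment from convexity and the $C^1$ level curve from the fact that the normal cone to $\{\ol H\le s\}$ at $p$ is the conical hull of $\partial\ol H(p)$. The paper implements the key ``all directions parallel'' step directly: if $q_1,q_2\in\partial\ol H(p)$ were not parallel, orbits in the supports of the two Mather measures (all of them absolute minimizers of $\int L+\ol H(p)\,ds$, lying in the same Mather set) would have non-parallel asymptotic directions, and suitable lifts and integer translates would then have to intersect, which the non-crossing property (equivalently the graph property of the Mather set) forbids.

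The genuine gap in your version is exactly at that step. You propose to fix one rational transversal $\ell$ and read the common direction off the Poincar\'e rotation number of a $\Z$-equivariant first-return circle homeomorphism of $\cM_p\cap\ell$, but the assertion that ``its rotation number is an invariant of $p$, independent of the chosen orbit'' is precisely the statement to be proven: before you know all orbits of $\cM_p$ drift in the same direction, you cannot choose a single transversal guaranteed to be met infinitely often by every orbit (an orbit whose asymptotic direction is parallel to $\ell$ may never meet it), nor can you assemble the return data of different orbits, or of orbits in the supports of two different Mather measures at the same $p$, into one circle homeomorphism with one rotation number. You flag this as ``the main obstacle'' but do not resolve it, so as written the argument is circular; in the paper the circle-homeomorphism picture (Lemma \ref{slope-rate}) is used only after parallelism is known and a transversal periodic orbit on the same energy level has been produced, and only to get quantitative bounds. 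Replacing this step by the direct crossing argument closes the gap. A minor further point: ``the level curve is $C^1$ at $p$ precisely when $\partial\ol H(p)$ has empty interior'' is false as an equivalence (a segment transversal to the radial direction has empty interior yet produces a corner); the argument you actually run --- the normal cone is the conical hull of $\partial\ol H(p)$, which is a single ray once $\partial\ol H(p)$ is a radial segment avoiding the origin --- is the correct one.
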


Note that the above is in general false when $n\geq 3$ (see Lemma \ref{lem:formula-Hbar}). For readers' convenience, we provide a sketch of the proof. 
The basic idea is quite simple. 
Suppose that $q_1,q_2\in \partial \ol H(p)$. 
Then, there exist two Mather measures $\mu_1$ and $\mu_2$ associated with $p$ such that, for $i=1,2$, the rotation vector
\[
\iint_{\T^2\times \R^2}q\,d\mu_i=q_i.
\]
Accordingly, if $q_1$ and $q_2$ are not parallel,  then there are two different orbits from supports of $\mu_1$ and $\mu_2$ respectively,  which intersect each other. 
This is impossible. 

This conclusion also implies the existence of $s_p \in [-\infty,\infty]$ such that,
for any orbit $\xi(t)=(x(t),y(t))$ on the Mather set $\widetilde{\mathcal{M}}_p$,
\[
\lim_{|t| \to \infty} \frac{y(t)}{x(t)} = s_p.
\]

\subsection{Identification with circle homeomorphisms}

Fix $p\in \R^2$. Let $\widetilde{\mathcal{M}}_p$ be the Mather set corresponding to $p$.

Assume that  $\xi:\R \mapsto \T^2$  is an orbit on $\widetilde{\mathcal{M}}_p$, i.e., 
\[
\left\{(\xi(t),\dot \xi(t))\,:\,t\in  \R \right\} \subset \widetilde{\mathcal{M}}_p.
\]
We lift $\xi$ to $\R^2$ and still write it the same by abuse of notation.  
Let us note right away that orbits on $\widetilde{\mathcal{M}}_p$ do not intersect with each other since they are all absolute minimizers of $\int L(\gam,\dot \gam)+\ol H(p)\,ds$.  Therefore, they are totally ordered in $\R^2$ (see Figure \ref{fig:orbit}).

In the following, we explain how to associate $\xi=(x(t),y(t))$ with a circle map $f:\R\to \R$ when $\ol H(p)>\min_{\R^2} \ol H$, which is well-known in the Aubry-Mather theory. See \cite[Chapter 3 and 6]{Ba1}  for details from geometric point of view. Without loss of generality, we assume that 
\[
\lim_{|t| \to \infty} \frac{y(t)}{x(t)} = s_p\in [0,1].
\]
Let  $\eta:\R \mapsto \T^2$ be a periodic  trajectory associated with  another Mather set on the same energy level  with rotation vector parallel to $(0,1)$.
More precisely, there exists $p'\in \R^2$ and $T>0$ such that   $\ol H(p')=\ol H(p)$,
\[
\left\{(\eta(t),\dot \eta(t))\,:\,t\in  \R \right\} \subset \widetilde{\mathcal{M}}_{p'},
\]
and 
\[
\eta(T)-\eta(0)=(0,1).
\]
The existence of periodic orbits $\eta$ with rational rotation vector is well-known (see \cite{Ba1}).   
Clearly, for each $k\in \Z$,  $\xi$ intersects with $\eta_k=\eta+(k,0)$ exactly once since both $\xi$ and $\eta$ are absolute minimizers of the action 
\[
\int L(\gamma, \dot \gamma)+\ol H(p)\,ds.
\]
Without loss of generality, we may assume that 
\[
\xi(0)=\eta(0)\in  [0,1]^2.
\] 
For each $k\in \Z$, let $a_k\in \R$ be such that 
\[
\xi\cap \eta_k=\eta_k(a_kT).
\]
Since  orbits on $\widetilde{\mathcal{M}}_p$ are totally ordered in $\R^2$,
either $a_k=0$ for all $k\in \Z$ or $\{a_k\}_{k\in \Z}$ is a strictly increasing sequence.  
Moreover, for fixed $k>l$, 
\[
a_k-a_l=i\in \Z  \quad \Longrightarrow \quad a_{k+m}-a_{l+m}=i \quad \text{for all } m\in \Z.
\]
Indeed, $a_k-a_l=i$ means that there exists $\alpha \in \R$ such that $\xi(s+\alpha) = \xi(s)+(k-l,i)$ for all $s\in \R$, and hence, the implication follows.

Thus, there exists a circle homeomorphism $f$ such that 
\[
f(a_k)=a_{k+1}  \quad \text{for all $k\in \Z$}.
\]
See \cite[Theorem 3.15]{Ba1} for further details on the definition of $f$.

\begin{figure}[h]
\begin{center}
\begin{tikzpicture}

\draw plot [smooth] coordinates {(-2,0.1) (-1,0.5) (0,1.1) (1,1.5) (2,1.9) (3,2.5) (4,2.8)};
\draw (-2,0.3) node {$\tilde \xi$};
\draw[->] (-1,0.5)--(-0.95,0.51);

\draw plot [smooth] coordinates {(-2,-1.9) (-1,-1.5) (0,-0.9) (1,-0.5) (2,-0.1) (3,0.5) (4,0.8)};
\draw (-2,-2.3) node {$\xi$};
\draw[->] (-1,-1.5)--(-0.95,-1.49);

\draw plot [smooth] coordinates {(-2,-1) (-1,-0.5) (0,-0.3) (1,0.3) (2,1) (3,1.4) (4,2)};
\draw plot [smooth] coordinates {(-2,-0.8) (-1,-0.3) (0,-0.1) (1,0.5) (2,1.3) (3,1.7) (4,2.2)};

\draw[red, thick, dashed] plot [smooth] coordinates {(0,-3) (0.3,-2) (0,-1) (0.3,0) (0,1) (0.3,2)};
\draw[red, thick, dashed] plot [smooth] coordinates {(1,-3) (1.3,-2) (1,-1) (1.3,0) (1,1) (1.3,2)};
\draw[red, thick, dashed] plot [smooth] coordinates {(2,-3) (2.3,-2) (2,-1) (2.3,0) (2,1) (2.3,2)};

\draw (0,-3.3) node {$\eta_0$};
\draw (1,-3.3) node {$\eta_1$};
\draw (2,-3.3) node {$\eta_2$};
\end{tikzpicture}
\caption{Orbits in $\widetilde{\cM}_p$ and $\{\eta_k\}_{k\in \Z}$} \label{fig:orbit}
\end{center}
\end{figure}
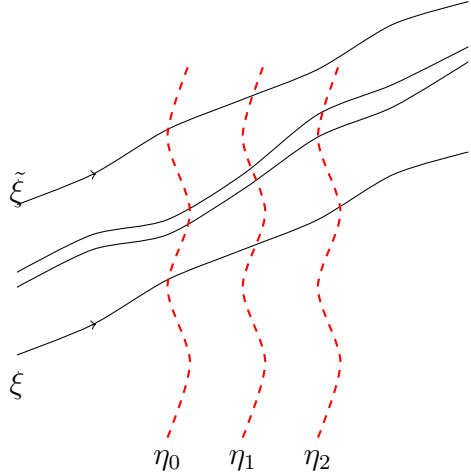

\subsection{Uniform convergence of  slope of orbits on Mather sets.}

Through suitable translations, we may assume that 
\[
\min_{\R^2}\ol H=\ol H(0)=0.
\]
Let $p$, $p'$, $\xi$ and $\eta$ be from the previous section. Denote $0<r=\ol H(p)=\ol H(p')$,
\[
S_r=\left\{\ol H=r\right\}, \quad a_r=\max_{\tilde p\in S_r}\left|\tilde p \right|, \quad    b_r=\max\left\{|D_pH(y,\tilde p)| \,:\,  H(y,\tilde p)=r\right\},
\]
and
\[
 \quad d=\max\left\{|\tilde p|: \  H(y,\tilde p)=0\right\}.
\]
Let $v$ and $v'$ be viscosity solutions to 
\[
H(y,p+Dv)=\ol H(p)=r \quad \text{ in } \T^2
\]
and
\[
H(y,p'+Dv')=\ol H(p')=r \quad \text{ in } \T^2,
\]
respectively.  Note that $v$, $v'$ are smooth along $\xi$, $\eta$, respectively, and 
\[
\dot \xi=D_pH(\xi, p+Dv(\xi))     \quad  \mathrm{and} \quad \dot \eta=D_pH(\eta, p'+Dv'(\eta)).
\]
Therefore,
\begin{equation}\label{speed-bound}
|\dot \xi|,  \  |\dot \eta|\leq b_r.
\end{equation}
Also,  for any Lipschitz continuous curve $\gamma:[s_1,s_2]\to \R^2$, 
\begin{align}\label{L-lowerbound}
\int_{s_1}^{s_2}L(\gamma, \dot \gamma)\,ds&= \int_{s_1}^{s_2}L(\gamma, \dot \gamma)+H(\gam, Dw)\,ds\\
&\geq \int_{s_1}^{s_2}\dot \gamma\cdot Dw\,ds
=w(\gamma(s_2))-w(\gamma(s_1))\geq -d \sqrt{2}. \notag
\end{align}
Here $w$ is a viscosity solution to (E)$_0$.
Although $w$ might not be differentiable along $\gamma$, the above calculation can be easily justified by using standard convolutions of $w$.

Next we give  uniform lower and upper bounds of the period $T$ of $\eta$.  Since the rotation vector
\[
{\eta(T)-\eta(0)\over T}=\frac{1}{T} (0,1)\in  \partial \ol H(p')
\]
and $p'\cdot q\geq \ol H(p')=r$ for any $q\in \partial \ol H(p')$, we have that 
\begin{equation}\label{period-bound}
{1\over b_r}\leq T\leq   {a_r\over r}.
\end{equation}

\begin{lem}\label{slope-rate} Assume that $s_p\in [0,1]$. Then for all $t\in  \R$, 
\[
\left|y(t)-s_px(t)\right|\leq {C}.
\]
Here $C$ is  a constant depending only on $r$, $a_r$, $b_r$ and $d$. 
\end{lem}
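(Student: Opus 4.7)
The plan is to transfer the universal bounded-deviation property (\ref{circle-control}) of the Poincar\'e rotation number for circle homeomorphisms back to the orbit $\xi$. I would begin by decomposing each $a_k = m_k + r_k$ with $m_k \in \Z$ and $r_k \in [0,1)$. Using periodicity $\eta(a_k T) = \eta(r_k T) + (0, m_k)$ together with the defining identity $\xi(t_k) = \eta_k(a_k T) = \eta(a_k T) + (k,0)$, one obtains
\[
x(t_k) = k + \eta_1(r_k T), \qquad y(t_k) = m_k + \eta_2(r_k T),
\]
and both remainders $|\eta_1(r_k T)|$ and $|\eta_2(r_k T)|$ are at most $b_r T \leq a_r b_r / r$ by (\ref{speed-bound}) and (\ref{period-bound}). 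Since $\xi(0) = \eta(0)$ forces $a_0 = 0$, the Poincar\'e rotation number satisfies $\beta_f = \lim_{k\to\infty} a_k/k = \lim_{k\to\infty} m_k/k$, while the assumed asymptotic $y(t_k)/x(t_k) \to s_p$ is, by the formulas above, nothing but $\lim m_k/k$. Hence $\beta_f = s_p$.

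Applying (\ref{circle-control}) with base point $0$ then yields $|a_k - k s_p| \leq C$ uniformly in $k$, so $|m_k - k s_p| \leq C$, and combining with the bounds on $\eta_1,\eta_2$ gives the discrete estimate $|y(t_k) - s_p x(t_k)| \leq C$ at every crossing time. To upgrade this to all $t \in \R$, I would prove a uniform upper bound on the spacing $t_{k+1} - t_k$. The key is the calibration identity along $\xi$,
\[
\int_{t_k}^{t_{k+1}} \bigl( L(\xi,\dot\xi) + r \bigr)\, ds = p \cdot \bigl(\xi(t_{k+1}) - \xi(t_k)\bigr) + v(\xi(t_{k+1})) - v(\xi(t_k)),
\]
which holds because $\xi \subset \widetilde{\mathcal{M}}_p$ makes $v$ differentiable along $\xi$ with the Legendre equality on the level set $\{H=r\}$. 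Combining this with the universal lower bound $\int L\, ds \geq -d\sqrt{2}$ from (\ref{L-lowerbound}), the oscillation bound $\mathrm{osc}_{\T^2}(v) \leq C(r,a_r,b_r)$ coming from coercivity of $H$ on $\{H(y,\cdot) = r\}$, and the a priori bound $|\xi(t_{k+1})-\xi(t_k)| \leq C$ extracted from the strip-confinement in the previous paragraph, rearrangement yields $r(t_{k+1}-t_k) \leq C$. Finally (\ref{speed-bound}) gives $|\xi(t)-\xi(t_k)| \leq b_r(t_{k+1}-t_k) \leq Cb_r/r$ for $t \in [t_k,t_{k+1}]$, and the bound for $t$ extends from the one at $t_k$.

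The main obstacle will be the spacing estimate $t_{k+1}-t_k \leq C/r$. The identification $\beta_f = s_p$ and the discrete inequality at crossing times are essentially bookkeeping once (\ref{circle-control}) is available, but the time bound is the one place where the hypothesis $r > \min \ol H = 0$ must be used quantitatively: it is precisely the positivity of $r$ that converts the calibrated action (whose integrand is bounded below via the zero-energy subsolution $w$ of (E)$_0$) into honest control on the time parametrization. A secondary technical point is to ensure the oscillation of $v$ is controlled purely in terms of $r, a_r, b_r, d$, which follows from the positive homogeneity of $H$ and the resulting boundedness of the level set $\{H(y,\cdot) = r\}$.
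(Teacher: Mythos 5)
Your proposal is correct and follows essentially the same route as the paper: a discrete bound at the crossing times $t_k$ via the circle homeomorphism and the bounded-deviation property \eqref{circle-control}, then a uniform bound on $t_{k+1}-t_k$ from the calibration identity along $\xi$ combined with the lower bound \eqref{L-lowerbound} and the positivity of $r$, and finally interpolation via the speed bound \eqref{speed-bound}. Your explicit decomposition $a_k=m_k+r_k$ is just a more detailed form of the paper's estimates $|x(t_k)-k|\leq C$, $|y(t_k)-a_k|\leq C$, so the two arguments coincide in substance.
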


\begin{proof} 
Assume that 
\[
\xi(t_k)=\eta_k(a_k T).
\]
In light of \eqref{speed-bound} and \eqref{period-bound}
\[
\left|\eta(t)-\eta(0)-\left(0,{t\over T}\right)\right|\leq C.
\]
Therefore,
\[
|x(t_k)-k|\leq C  \quad \mathrm{and} \quad   \left|y(t_k)-a_k\right|\leq C.
\]
Hence, $\lim_{k\to \infty} {a_k\over k}=s_p$.  Since $t_0=a_0=0$, thanks to   \eqref{circle-control},
\[
\left|a_k-k s_p \right|\leq 1.
\]
Accordingly, 
\begin{equation}\label{bound-k}
\left|y(t_k)-s_p x(t_k)\right|\leq {C} \quad \text{for all $k\in \Z$}.
\end{equation}
Next we claim that 
\begin{equation}\label{bound-t}
|t_k-t_{k+1}|\leq C  \quad \text{for all $k\in \Z$}.
\end{equation}
In fact, from the previous calculations, we have that 
\[
|x(t_{k+1})-x(t_k)|\leq C \quad \mathrm{and} \quad  |y(t_{k+1})-y(t_k)|\leq C,
\]
which, together with \eqref{L-lowerbound},  yields
\begin{align*}
C \geq p\cdot( \xi(t_{k+1})-\xi(t_k))+v(\xi (t_{k+1})-v(\xi(t_k))&=\int_{t_k}^{t_{k+1}}L(\xi, \dot \xi)+\ol H(p)\,ds\\
&= r(t_{k+1}-t_k)+\int_{t_k}^{t_{k+1}}L(\xi, \dot \xi)\,ds\\
&\geq r(t_{k+1}-t_k)-d \sqrt{2}.
\end{align*}
Thus, \eqref{bound-t} holds.  Combine \eqref{speed-bound}, \eqref{bound-k} and \eqref{bound-t} to imply
\begin{equation}\label{continuous-control}
\left|{y(t)}-s_p x(t)\right|\leq {C} \quad \text{for all $t\in \R$}.
\end{equation}

\begin{rem}\label{rem:constant-issue}  We would like to highlight that constant $C$ in \eqref{continuous-control} above does not depend on the smoothness of $H$. Write 
\begin{align*}
&M_r(x)=\max\left\{|p|\,:\, H(x,p)=r \right\},   \\ 
&D_r(x)=\min\left\{|p_2-p_1| \,:\,   H(x,p_2)=2r, \  H(x,p_1)=r \right\}.
\end{align*}
Then it is easy to see that 
\[ 
a_r\leq \max_{x\in \R^2}M_r(x)  \quad \mathrm{and} \quad b_r\leq {r\over \min_{x\in \R^2}D_r(x)}.
\]

\end{rem}

\end{proof}


\begin{thebibliography}{30} 

\bibitem{ACS}
S. N. Armstrong, P. Cardaliaguet, P. E.  Souganidis, 
\emph{Error estimates and convergence rates for the stochastic homogenization of Hamilton-Jacobi equations}, 
J. Amer. Math. Soc. 27 (2014), no. 2, 479--540. 

\bibitem{Ar}
V. I. Arnold,
Geometrical methods in the theory of ordinary differential equations, Second edition, 
Grundlehren der mathematischen Wissenschaften, 250, 1988, Springer-Verlag.

\bibitem{Ba1} 
V. Bangert, 
\emph{Mather Sets for Twist Maps and Geodesics on Tori}, Dynamics Reported, Volume 1, 1988.

\bibitem{Ba1989}
V.Bangert,
\emph{Minimal geodesics}, Ergod. Th.  $\&$ Dynam. Sys. (1989), 10, 263--286.

\bibitem{Ba2}
V. Bangert, 
\emph{Geodesic rays, Busemann functions and monotone twist maps}, 
{Calculus of Variations and Partial Differential Equations}, January 1994, Volume 2, Issue 1, 49--63.

\bibitem{Be}
P. Bernard,
\emph{The asymptotic behaviour of solutions of the forced Burgers equation on the circle},
Nonlinearity 18 (2005) 101--124.

\bibitem{CCM}
F. Camilli, A. Cesaroni, C. Marchi, 
\emph{Homogenization and vanishing viscosity in fully nonlinear elliptic equations: rate of convergence estimates}, 
Adv. Nonlinear Stud. 11 (2011), no. 2, 405--428.

\bibitem{Car}  M. J. Carneiro, {\em On minimizing measures of the action of autonomous Lagrangians}, Nonlinearity 8 (1995) 1077--1085.

\bibitem{CDI}
I. Capuzzo-Dolcetta, H. Ishii,
\emph{On the rate of convergence in homogenization of Hamilton--Jacobi equations},
Indiana Univ. Math. J. {50} (2001), no. 3, 1113--1129.

\bibitem{WE}
W. E, 
Aubry-Mather theory and periodic solutions of the forced Burgers equation,
\emph{Comm. Pure Appl. Math.} 52 (1999), no. 7, 811--828.
 
  \bibitem{Ev1}
 L. C. Evans,
\emph{Periodic homogenisation of certain fully nonlinear partial differential equations}, 
Proc. Roy. Soc. Edinburgh Sect. A 120 (1992), no. 3-4, 245--265.

\bibitem{EG}
L. C. Evans, D. Gomes, 
\emph{Effective Hamiltonians and Averaging for Hamiltonian Dynamics. I}, 
Arch. Ration. Mech. Anal. 157 (2001), no. 1, 1--33.

\bibitem{Fa}
A. Fathi, 
Weak KAM Theorem in Lagrangian Dynamics.

\bibitem{Go}
D. A. Gomes,
\emph{Viscosity solutions of Hamilton-Jacobi equations, and asymptotics for Hamiltonian systems},
Calc. Var. 14, 345--357 (2002).

\bibitem{Hed} G. A. Hedlund,  {\em Geodesies on a two-dimensional Riemannian manifold with periodic coefficients},
Ann. of Math. 33 (1932), 719--739.

\bibitem{LPV}  
P.-L. Lions, G. Papanicolaou and S. R. S. Varadhan,  
\emph{Homogenization of Hamilton--Jacobi equations}, unpublished work (1987). 

\bibitem{LYZ}
S. Luo, Y. Yu, H. Zhao, 
\emph{A new approximation for effective Hamiltonians for homogenization of a class of Hamilton-Jacobi equations}, 
Multiscale Model. Simul. 9 (2011), no. 2, 711--734.

\bibitem{MT}
H. Mitake, H. V. Tran, 
\emph{Homogenization of weakly coupled systems of Hamilton-Jacobi equations with fast switching rates}, 
Arch. Ration. Mech. Anal. 211 (2014), no. 3, 733--769.

\end {thebibliography}
\end{document}